\documentclass{article}

\textwidth 16cm            
\textheight 22cm
\oddsidemargin 0cm
\evensidemargin 0cm
\topmargin -1cm

\usepackage{rotating}
\usepackage{tikz-cd}
\usepackage[all]{xy}
\usepackage{amssymb}
\usepackage{amsmath}
\usepackage{amsthm}
\newtheorem{thm}{Theorem}
\newtheorem{prop}{Proposition}
\newtheorem{lem}{Lemma}
\newtheorem{rem}{Remark}
\newtheorem{cor}{Corollary}
\newtheorem{defi}{Definition}
\newtheorem{defiprop}{Definition-Proposition}

\def\Top{\mathop{\rm Top}\nolimits}
\def\RTop{\mathop{\rm RTop}\nolimits}
\def\Sch{\mathop{\rm Sch}\nolimits}

\def\dim{\mathop{\rm dim}\nolimits}

\def\SmVar{\mathop{\rm SmVar}\nolimits}
\def\Gr{\mathop{\rm Gr}\nolimits}

\def\PSmVar{\mathop{\rm PSmVar}\nolimits}
\def\Var{\mathop{\rm Var}\nolimits}
\def\Hom{\mathop{\rm Hom}\nolimits}
\def\Spec{\mathop{\rm Spec}\nolimits}
\def\Coh{\mathop{\rm Coh}\nolimits}

\def\QPVar{\mathop{\rm QPVar}\nolimits}
\def\AnSp{\mathop{\rm AnSp}\nolimits}
\def\CW{\mathop{\rm CW}\nolimits}

\def\PVar{\mathop{\rm PVar}\nolimits}

\def\sing{\mathop{\rm sing}\nolimits}

\def\Cone{\mathop{\rm Cone}\nolimits}
\def\ad{\mathop{\rm ad}\nolimits}

\def\log{\mathop{\rm log}\nolimits}
\def\Diff{\mathop{\rm Diff}\nolimits}

\def\PSh{\mathop{\rm PSh}\nolimits}
\def\AnSm{\mathop{\rm AnSm}\nolimits}

\def\pt{\mathop{\rm pt}\nolimits}
\def\DA{\mathop{\rm DA}\nolimits}

\def\Fun{\mathop{\rm Fun}\nolimits}

\def\Ab{\mathop{\rm Ab}\nolimits}

\def\coker{\mathop{\rm coker}\nolimits}
\def\Ext{\mathop{\rm Ext}\nolimits}

\def\Cat{\mathop{\rm Cat}\nolimits}
\def\RCat{\mathop{\rm RCat}\nolimits}

\def\an{\mathop{\rm an}\nolimits}

\def\Mod{\mathop{\rm Mod}\nolimits}

\def\Shv{\mathop{\rm Shv}\nolimits}

\def\Vect{\mathop{\rm Vect}\nolimits}
\def\HubSp{\mathop{\rm HubSp}\nolimits}
\def\GrAb{\mathop{\rm GrAb}\nolimits}
\def\logSch{\mathop{\rm logSch}\nolimits}
\def\logVar{\mathop{\rm logVar}\nolimits}
\def\Grd{\mathop{\rm Gr}\nolimits}
\def\AbCat{\mathop{\rm AbCat}\nolimits}
\def\PSch{\mathop{\rm PSch}\nolimits}
\def\FSch{\mathop{\rm FSch}\nolimits}

\def\dar[#1]{\ar@<2pt>[#1]\ar@<-2pt>[#1]}

\title{Complex vs etale Abel Jacobi map for higher Chow groups and 
algebraicity of the zero locus of etale normal functions}

\author{Johann Bouali}

\begin{document}

\maketitle

\begin{abstract}
We prove, using $p$-adic Hodge theory for open algebraic varieties,
that for a smooth projective variety over a field $k\subset\mathbb C$ of finite type over $\mathbb Q$,
the complex abel jacobi map vanishes if the etale abel jacobi map vanishes.
This implies that for a smooth projective morphism $f:X\to S$ 
of smooth complex algebraic varieties over a field $k\subset\mathbb C$ of finite type over $\mathbb Q$  
and $Z\in\mathcal Z^d(X,n)^{f,\partial=0}$ an algebraic cycle flat over $S$ whose cohomology class vanishes on fibers, 
the zero locus of the etale normal function associated to $Z$ is contained in the zero locus
of the complex normal function associated to $Z$. From the work of Saito or Charles, 
we deduce that the zero locus of the complex normal function associated to $Z$ is defined over 
$\bar k$ if the zero locus of the etale normal function associated to $Z$ is not empty. 
We also prove an algebraicity result for the zero locus of an etale normal function associated to an algebraic cycle
over a field $k$ of finite type over $\mathbb Q$.
By the way, we get for a smooth morphism $f:X\to S$ of smooth complex algebraic varieties 
over a subfield $k\subset\mathbb C$ of finite type over $\mathbb Q$, 
the locus of Hodge Tate classes inside the locus of Hodge classes of $f$.
\end{abstract}

%%%%%%%%%%%%%%%%%%%%%%%%%%%%%%%%%%%%%%%%%%%%%%%%%%%%%%%%
\section{Introduction}
%%%%%%%%%%%%%%%%%%%%%%%%%%%%%%%%%%%%%%%%%%%%%%%%%%%%%%%%

\hskip 1cm Let $X$ be a connected smooth projective variety over ${\mathbb C}$. 
The Abel-Jacobi map associates to a cycle $Z\in\mathcal Z^d(X)$ homologically equivalent to 0 of codimension $d$, 
a point $AJ(X)(Z)\in J^d(X)$ in the intermediate Jacobian of $X$. 
In family, if $f:X\to S$ is a smooth projective morphism of smooth connected complex varieties and 
$Z\in\mathcal Z^d(X)$ is a relative cycle on $S$ of codimension $d$ homologicaly trivial on fibers, 
the Abel-Jacobi map provides a holomorphic and horizontal section of the relative intermediate Jacobian : 
$\nu_Z:S\to J^d(X/S)$ called the associated normal function. 
By a Brosnan-Pearlstein theorem, the zero-locus $V(\nu_Z)\subset S$ and more generally the torsion locus
$V(\nu_Z)\subset V_{tors}(\nu_Z)\subset S$ of $\nu_Z$ is an algebraic subvariety (\cite{BPS}). 
If $X/S$ and $Z$ are defined over a subfield $k\subset\mathbb C$, 
we conjecture in the spirit of the Bloch-Beilinson conjectures, 
that the zero-locus of the normal function is also defined over $k$ (see \cite{Charles}). \\

\hskip 1cm On the other hand, if $k$ is of finite type over ${\mathbb Q}$, $X$ is a connected smooth projective variety over $k$, 
and $p$ is a prime number, we can define via the continuous étale cohomology with $\mathbb Z_p$ coefficients
an etale Abel-Jacobi map which associate to a cycle $Z\in\mathcal Z^d(X,n)$ homologically equivalent to 0 of codimension $d$,
an element $AJ^{et,p}(X)(Z)\in H^1(G,H^{2d-1-n}(X_{\bar k},\mathbb Z_p))$ of the first degree Galois cohomology of 
the absolute galois group $G:=Gal(\bar k/k)$ with value in the etale cohomology of $X_{\bar k}$ with $\mathbb Z_p$ coefficients. 
In family, we get, for $f:X\to S$ a smooth projective morphism of smooth connected varieties over $k$ 
and $Z\in\mathcal Z^d(X,n)$ a relative cycle on $S$ of codimension $d$ homologicaly trivial on fibers,
a normal function $\nu_Z^{et,p}$ associated to $Z\in\mathcal Z^d(X,n)$ 
and thus a zero-locus $V(\nu_Z^{et,p})\subset S$ and more generally
a torsion locus $V(\nu_Z^{et,p})\subset V_{tors}(\nu_Z^{et,p})\subset S$ of $\nu_Z^{et}$ 
which are subsets of closed points of $S$. 
For $\sigma:k\hookrightarrow\mathbb C$ an embedding, and $V\subset S$ a subset, we denote 
$V_{\mathbb C}:=\pi_{k/\mathbb C}(S)^{-1}(V)\subset S_{\mathbb C}$ 
where $\pi_{k/\mathbb C}(S):S_{\mathbb C}\to S$ is the projection. \\

\hskip 1cm Let $k$ of finite type over ${\mathbb Q}$ and 
$f:X\to S$ a smooth projective morphism of smooth connected varieties over $k$. Let $Z\in\mathcal Z^d(X)$ a relative cycle.
F. Charles then proves that for any embedding $\sigma:k\hookrightarrow\mathbb C$ 
\begin{itemize}
\item assuming that $R^{2d-1}f^{an}_*(\mathbb C)$ has no global sections 
then $V(\nu_Z^{et,p})_{\mathbb C}= V(\nu_Z)$ (\cite{Charles}) 
\item if $V(\nu_Z)(\bar k)$ is non-empty then it is defined over $k$ (\cite{Charles2}).\\
\end{itemize}

\hskip 1cm In this work, we show that, for a field $k$ of finite type over $\mathbb Q$, 
$X$ a connected smooth projective variety over $k$, 
$Z\in\mathcal Z^d(X,*)$ an (higher) algebraic cycle homologically equivalent to 0 of codimension $d$, 
and $p\in\mathbb N$ a prime number, 
if $AJ^{et,p}(X)(Z)=0$ then $AJ_{\sigma}(X)(Z):=AJ(X_{\mathbb C})(Z_{\mathbb C})=0$ 
for any embedding $\sigma:k\hookrightarrow\mathbb C$ (c.f. theorem \ref{main}).
This implies by definition that for $f:X\to S$ a smooth projective morphism of smooth connected varieties over 
a field $k$ of finite type over $\mathbb Q$, 
and $Z\in\mathcal Z^d(X,*)$ a relative cycle on $S$ of codimension $d$ homologicaly trivial on fibers,
\begin{equation*}
V_{tors}(\nu_Z^{et,p})_{\mathbb C}\subset V_{tors}(\nu_Z) 
\end{equation*}
for any embedding $\sigma:k\hookrightarrow\mathbb C$ without any assumption (c.f. corollary \ref{maincor}(i)). 
We deduce that if $V(\nu_Z^{et,p})$ is not empty, 
$V(\nu_Z)$ is defined over the algebraic closure $\bar k$ of $k$ (c.f. corollary \ref{maincor}(ii)). 

\noindent The proof of theorem \ref{main} uses $p$-adic Hodge theory for open varieties 
to relate de Rham cohomology and its Hodge filtration 
to $p$-adic étale coholomogy with its Galois action. 
Theorem \ref{main} follows indeed from the fact that 
by proposition \ref{keypropCp}(i), proposition \ref{LatticeLog} and proposition \ref{keypropC},
we have for a field $k$ of finite type over $\mathbb Q$, $U\in\SmVar(k)$, 
and embeddings $\sigma:k\hookrightarrow\mathbb C$, $\sigma_p:k\hookrightarrow\mathbb C_p$ for a prime $p\in\mathbb N$, 
for each $j,l\in\mathbb Z$, a canonical injective map
\begin{equation*}
H^j\iota_{p,ev}^{G,l}(U):H_{et}^j(U_{\bar k},\mathbb Z_p)(l)^G
\hookrightarrow F^lH^j(U^{an}_{\mathbb C},2i\pi\mathbb Q)\otimes_{\mathbb Q}\mathbb Q_p, \;
\alpha\mapsto H^j\iota_{p,ev}^{G,l}(U)(\alpha):=ev(U)(w(\alpha)),
\end{equation*}
which is by construction functorial in $U\in\SmVar(k)$ (see theorem \ref{CevGd}).  
More precisely, if the étale Abel-Jacobi image of a cycle $Z$ is zero, 
\begin{itemize}
\item there exist a Galois invariant class $\alpha$ in the étale cohomology of 
$((X\times\square^n)\backslash Z)_{\bar k}$ with non-zero boundary, 
where $\bar k$ is an algebraic closure of $k$, 
\item then, by $p$ adic Hodge theory for $(X\times\square^n)\backslash Z$, 
$\alpha$ define a logarithmic de Rham class $w(\alpha)_L$ laying inside the right degree of the Hodge filtration of 
$((X\times\square^n)\backslash Z)_{\hat k_{\sigma_p}}$ where $\hat k_{\sigma_p}\subset\mathbb C_p$
is the $p$-adic completion of $k$ with respect to an embedding $\sigma_p:k\hookrightarrow\mathbb C_p$
(c.f.proposition \ref{keypropCp}(i) and proposition \ref{LatticeLog}) 
\item taking the image of this class by the complex period map with respect to an embedding 
$\sigma':k'\hookrightarrow\mathbb C$ extending the given embedding $\sigma:k\hookrightarrow\mathbb C$ 
where $k'\subset\hat k_{\sigma_p}$ is a subfield over which $w(\alpha)_L$ is defined, 
we get a Betti cohomology class $ev((X\times\square^n)\backslash Z)(w(\alpha)_L)$ 
of $((X\times\square^n)\backslash Z)^{an}_{\mathbb C}$ with $2i\pi\mathbb Q$ coefficients 
(c.f. proposition \ref{keypropC}), 
\item this last class $(1/2i\pi)ev((X\times\square^n)\backslash Z)(w(\alpha)_L)$
induce a splitting of the localization exact sequence of mixed Hodge structures :
\begin{eqnarray*}
0\rightarrow H^{2d-1-n}(X^{an}_{\mathbb C},\mathbb Q)\xrightarrow{j^*} 
H^{2d-1}(((X\times\square^n)\backslash Z)^{an}_{\mathbb C},\mathbb Q)^{[Z]} 
\xrightarrow{\partial}H^{2d}_Z((X\times\square^n)^{an}_{\mathbb C},\mathbb Q)^{[Z]}=\mathbb Q^{Hdg}(d)\rightarrow 0,
\end{eqnarray*}
which means that the complex Abel-Jacobi image de $Z$ is zero.
\end{itemize}
By the way, since for $X\in\PSmVar(k)$, $\iota_{p,ev}^{G,d}(X)$ is compatible with cycle class maps,
we get in particular that Hodge conjecture implies Tate conjecture. 
In particular, we get Tate conjecture for divisors for smooth projective varieties
over fields of characteristic zero.

\noindent In section 6, we show that, for  
$f:X\to S$ a smooth projective morphism of connected smooth varieties over a field $k$ of finite type over $\mathbb Q$, 
$Z\in\mathcal Z^d(X,*)$ an (higher) algebraic cycle homologically equivalent to 0 of codimension $d$, and $p$ a prime number
for all expect finitely many prime numbers $p\in\mathbb N$ 
\begin{equation*}
V_{tors}(\nu_Z^{et,p})=T\cap S_{(0)}\subset S, 
\end{equation*}
where $T\subset S$ is the image of closed points of a constructible algebraic subset of
\begin{equation*}
E_{DR}^{2d-1}(((X\times\square^*)\backslash|Z|)_{\hat k_{\sigma_p}}/S_{\hat k_{\sigma_p}})
:=H^{2d-1}\int_f(O_{((X\times\square^*)\backslash|Z|)_{\hat k_{\sigma_p}}},F_b)\in\Vect_{fil}(S_{\hat k_{\sigma_p}})
\end{equation*}
by the projection
$p_S:E_{DR}^{2d-1}(((X\times\square^n)\backslash|Z|)_{\hat k_{\sigma_p}}/S_{\hat k_{\sigma_p}})\to S_{\hat k_{\sigma_p}}$,
where $\hat k_{\sigma_p}\subset\mathbb C_p$ 
is the completion of $k$ with respect to $\sigma_p$ (c.f. definition \ref{nuZkalgdef} and theorem \ref{nuZkalg}(i)).
We also give a local version for $V_{tors}(\nu_{Z,\sigma_p}^{et,p})\subset S$ (c.f. theorem \ref{nuZkalg}(ii)).
The proof use on the one hand proposition \ref{keypropCp}(ii), and on the other hand
De Yong alterations to get a stratification 
$S=\sqcup_{\alpha\in\Lambda} S^{\alpha}$ by locally closed subsets
and alterations $\pi^{\alpha}:(X_{S^{\alpha}}\times\square^n)^{\alpha}\to X_{S^{\alpha}}\times\square^n$ such that
\begin{equation*}
f\circ\pi^{\alpha}:((X\times\square^n)^{\alpha}_{\hat k_{\sigma_p}},\pi^{\alpha,-1}(|Z|))
\to S_{\hat k_{\sigma_p}}^{\alpha} 
\end{equation*}
is a semi-stable morphism. We then use the $p$ adic semi-stable comparison theorem 
for semi-stable morphisms $f':(X',D')\to S'$, with $S',X',D'\in\SmVar(\hat k_{\sigma_p})$, that is satisfying
\begin{itemize}
\item $f':X'\to S$ is smooth projective, $D'\subset X'$ is a normal crossing divisor,
\item for all $s\in S'$, $D'_s\subset X'_s$ is a normal crossing divisor and
$(X'_s,D'_s)$ has integral model with semi-stable reduction, 
\end{itemize}
(or more generally for log smooth morphism of schemes over $(O_{\hat k_{\sigma_p}},N_{\mathcal O})$),
which gives, for each $j\in\mathbb Z$ a canonical filtered isomorphism
\begin{equation*}
H^jf'_*\alpha(U'):R^jf'_*\mathbb Z_{p,U_{\mathbb C_p}^{'et}}\otimes_{\mathbb Z_p}O\mathbb B_{st,S'}
\xrightarrow{\sim}R^jf'_{*Hdg}(O_{U'},F_b)\otimes_{O_{S'}}O\mathbb B_{st,S'}.
\end{equation*}
which is for each $s\in S'$ compatible with the action of the Galois group $Gal(\mathbb C_p/k(s))$,
the Frobenius and the monodromy:
for all but finitely many primes $p$, $\hat k_{\sigma_p}(s)$ is unramified 
for all embeddings $\sigma_p:k\hookrightarrow\mathbb C_p$ and all $s\in S'$, so that we get a Frobenius
action on $R^jf'_{*Hdg}(O_{U'},F_b)_s=H^j_{DR}(U'_s)$ for all $s\in S'$. \\

This also give (see theorem \ref{HLk}) together with proposition \ref{keypropCp}(ii) and proposition \ref{keypropC}, 
for $f:X\to S$ a smooth morphism, with $S,X$ smooth over a subfield 
$\sigma:k\hookrightarrow\mathbb C$ of finite type over $\mathbb Q$, and $p$ any but finitely many prime numbers
and $\sigma_p:k\hookrightarrow\mathbb C_p$ an embedding, 
the locus of Hodge-Tate classes 
$(H^jRf_{*\mathbb Q_{p,X_{\bar k}^{et}}}(d))^G\subset H^jRf_{*\mathbb Q_{p,X_{\bar k}^{et}}}$, 
where $G=Gal(\bar k/k)$ and $\bar k\subset\mathbb C$ is the algebraic closure of $k$ in $\mathbb C$,
\begin{eqnarray*}
\iota_{ev}^{G,d}(X/S):\pi_{k/\mathbb C}(S)^*(H^jRf_{*\mathbb Q_{p,X_{\bar k}^{et}}}(d))^G\xrightarrow{\sim} \\
<F^dE^j_{DR}(X/S)\cap(\sqcup_{\alpha\in\Lambda}
((E^j_{DR}(X_{\hat k_{\sigma_p}}^{\alpha}/S_{\hat k_{\sigma_p}}^{\alpha})
\otimes_{O_{S_{\hat k_{\sigma_p}}}}O\mathbb B_{st,S_{\hat k_{\sigma_p}}^{\alpha}})^{\phi,N}))>_{\mathbb Q_p} \\
\hookrightarrow 
(F^dE_{DR}(X_{\mathbb C}/S_{\mathbb C})\cap R^jf_*\mathbb Q_{X^{an}_{\mathbb C}})\otimes_{\mathbb Q}\mathbb Q_p
=:HL_{j,d}(X_{\mathbb C}/S_{\mathbb C})\otimes_{\mathbb Q}\mathbb Q_p, \\ 
\alpha_s\mapsto ev(X_{k(s)})((1/2i\pi)w(\alpha_s)_{k(s)}), \, 
s'=\pi_{k/\mathbb C}^{-1}(s):k(s)\hookrightarrow\mathbb C, \, s\in S
\end{eqnarray*}
inside the locus of Hodge classes 
$HL_{j,d,\sigma}(X/S):=HL_{j,d}(X_{\mathbb C}/S_{\mathbb C})\subset E^j_{DR}(X_{\mathbb C}/S_{\mathbb C})$, 
which is a countable union of algebraic subvarieties (\cite{BPS}), where 
\begin{itemize}
\item $f:X\xrightarrow{j}\bar X\xrightarrow{f}S$ is a compactification of $f$ where $\bar X\in\SmVar(k)$,
$j$ is an open embedding and $\bar f$ is proper,
\item $E^j_{DR}(X/S):=H^j\int_{\bar f}j_{*Hdg}(O_X,F_b)$ and 
$E^j_{DR}(X_{\mathbb C}/S_{\mathbb C}):=H^j\int_{\bar f}j_{*Hdg}(O_{X_{\mathbb C}},F_b)$
are the filtered algebraic holonomic $D$-modules over $S$ and $S_{\mathbb C}$ respectively,
\item $S=\sqcup_{\alpha\in\Lambda}S^{\alpha}$ is a stratification by locally closed algebraic subsets, 
$\Lambda$ being a finite set,
\item $\pi^{\alpha}:X^{\alpha}\to X_{S^{\alpha}}$ being alterations, in particular we get sub vector bundles
$\pi^{\alpha,*}:E^j_{DR}(X_{S^{\alpha}}/S^{\alpha}):=E^j_{DR}(X/S)_{|S^{\alpha}}
\hookrightarrow E^j_{DR}(X^{\alpha}/S^{\alpha})$,
\item $\pi_{k/\hat k_{\sigma_p}}(S^{\alpha})^*E^j_{DR}(X^{\alpha}/S^{\alpha})
\subset E^j_{DR}(X^{\alpha}_{\hat k_{\sigma_p}}/S^{\alpha}_{\hat k_{\sigma_p}})$ 
is the canonical subset of closed points, $\hat k_{\sigma_p}\subset\mathbb C_p$ being the $p$ adic completion of $k$
with respect to $\sigma_p$,
\item $\pi_{k/\mathbb C}(S):S_{\mathbb C}\to S$ and $\pi_{k/\hat k_{\sigma_p}}(S):S_{\hat k_{\sigma_p}}\to S$ 
are the projections.
\end{itemize} 

I am grateful for professor F.Mokrane for help and support during this work as well as O.Wittenberg
for mentioning me an article of Jannsen on $l$-adic cohomology.

%%%%%%%%%%%%%%%%%%%%%%%%%%%%%%%%%%%%%%%%%%%%%%%%%%%%%%%%
\section{Preliminaries and Notations}
%%%%%%%%%%%%%%%%%%%%%%%%%%%%%%%%%%%%%%%%%%%%%%%%%%%%%%%%

\begin{itemize}

\item Denote by $\Top$ the category of topological spaces and $\RTop$ the category of ringed spaces.
\item Denote by $\Cat$ the category of small categories and $\RCat$ the category of ringed topos.
\item For $\mathcal S\in\Cat$ and $X\in\mathcal S$, we denote $\mathcal S/X\in\Cat$ the category whose
objects are $Y/X:=(Y,f)$ with $Y\in\mathcal S$ and $f:Y\to X$ is a morphism in $\mathcal S$, and whose morphisms
$\Hom((Y',f'),(Y,f))$ consists of $g:Y'\to Y$ in $\mathcal S$ such that $f\circ g=f'$.
\item For $\mathcal S\in\Cat$ denote $\Grd\mathcal S:=\Fun(\mathbb Z,\mathcal S)$ is the category of graded objects.
\item Denote by $\Ab$ the category of abelian groups.
For $R$ a ring denote by $\Mod(R)$ the category of (left) $R$ modules. 
We have then the forgetful functor $o_R:\Mod(R)\to\Ab$.
\item Denote by $\AbCat$ the category of small abelian categories.

\item For $(\mathcal S,O_S)\in\RCat$ a ringed topos, we denote by 
\begin{itemize}
\item $\PSh(\mathcal S)$ the category of presheaves of $O_S$ modules on $\mathcal S$ and
$\PSh_{O_S}(\mathcal S)$ the category of presheaves of $O_S$ modules on $\mathcal S$, 
whose objects are $\PSh_{O_S}(\mathcal S)^0:=\left\{(M,m),M\in\PSh(\mathcal S),m:M\otimes O_S\to M\right\}$,
together with the forgetful functor $o:\PSh(\mathcal S)\to \PSh_{O_S}(\mathcal S)$,
\item $C(\mathcal S)=C(\PSh(\mathcal S))$ and $C_{O_S}(\mathcal S)=C(\PSh_{O_S}(\mathcal S))$ 
the big abelian category of complexes of presheaves of $O_S$ modules on $\mathcal S$,
\item $C_{O_S(2)fil}(\mathcal S):=C_{(2)fil}(\PSh_{O_S}(\mathcal S))\subset C(\PSh_{O_S}(\mathcal S),F,W)$,
the big abelian category of (bi)filtered complexes of presheaves of $O_S$ modules on $\mathcal S$ 
such that the filtration is biregular and $\PSh_{O_S(2)fil}(\mathcal S)=(\PSh_{O_S}(\mathcal S),F,W)$.
\end{itemize}

\item Let $(\mathcal S,O_S)\in\RCat$ a ringed topos with topology $\tau$. For $F\in C_{O_S}(\mathcal S)$,
we denote by $k:F\to E_{\tau}(F)$ the canonical flasque resolution in $C_{O_S}(\mathcal S)$ (see \cite{B5}).
In particular for $X\in\mathcal S$, $H^*(X,E_{\tau}(F))\xrightarrow{\sim}\mathbb H_{\tau}^*(X,F)$.

\item For $f:\mathcal S'\to\mathcal S$ a morphism with $\mathcal S,\mathcal S'\in\RCat$,
endowed with topology $\tau$ and $\tau'$ respectively, we denote for $F\in C_{O_S}(\mathcal S)$ and each $j\in\mathbb Z$,
\begin{itemize}
\item $f^*:=H^j\Gamma(\mathcal S,k\circ\ad(f^*,f_*)(F)):\mathbb H^j(\mathcal S,F)\to\mathbb H^j(\mathcal S',f^*F)$,
\item $f^*:=H^j\Gamma(\mathcal S,k\circ\ad(f^{*mod},f_*)(F)):\mathbb H^j(\mathcal S,F)\to\mathbb H^j(\mathcal S',f^{*mod}F)$, 
\end{itemize}
the canonical maps.

\item For $\mathcal X\in\Cat$ a (pre)site and $p$ a prime number, we consider the full subcategory
\begin{equation*}
\PSh_{\mathbb Z_p}(\mathcal X)\subset\PSh(\mathbb N\times\mathcal X), \; \; 
F=(F_n)_{n\in\mathbb N}, \; p^nF_n=0, \; F_{n+1}/p^n\xrightarrow{\sim}F_n
\end{equation*}
$C_{\mathbb Z_p}(\mathcal X):=C(\PSh_{\mathbb Z_p}(\mathcal X))\subset C(\mathbb N\times\mathcal X)$ and
\begin{equation*}
\mathbb Z_p:=\mathbb Z_{p,\mathcal X}:=((\mathbb Z/p^*\mathbb Z)_{\mathcal X})\in\PSh_{\mathbb Z_p}(\mathcal X)
\end{equation*}
the diagram of constant presheaves on $\mathcal X$.

\item Let $f:\mathcal X'\to\mathcal X$ a morphism of (pre)site with $\mathcal X,\mathcal X'\in\Cat$.
We will consider for $F=(F_n)_{n\in\mathbb N}\in C_{\mathbb Z_p}(\mathcal X)$ the canonical map in $C(\mathcal X')$
\begin{equation*}
T(f^*,\varprojlim)(F):f^*\varprojlim_{n\in\mathbb N}F_n\to \varprojlim_{n\in\mathbb N}f^*F_n
\end{equation*}
Recall that filtered colimits do NOT commute with infinite limits in general. In particular,
for $f:\mathcal X'\to\mathcal X$ a morphism of (pre)site and 
$F=(F_n)_{n\in\mathbb N}\in\PSh_{\mathbb Z_p}(\mathcal X)$, 
$\varprojlim_{n\in\mathbb N}f^*F_n$ is NOT isomorphic to $f^*\varprojlim_{n\in\mathbb N}F_n$ 
in $\PSh(\mathcal X')$ in general.

\item Denote by $\Sch\subset\RTop$ the subcategory of schemes (the morphisms are the morphisms of locally ringed spaces).
We denote by $\PSch\subset\Sch$ the full subcategory of proper schemes.
For $X\in\Sch$, we denote by 
\begin{itemize}
\item $\Sch^{ft}/X\subset\Sch/X$ the full subcategory consisting of objects
$X'/X=(X',f)\in\Sch/X$ such that $f:X'\to X$ is an morphism of finite type
\item $X^{et}\subset\Sch^{ft}/X$ the full subcategory consisting of objects
$U/X=(X,h)\in\Sch/X$ such that $h:U\to X$ is an etale morphism.
\item $X^{sm}\subset\Sch^{ft}/X$ the full subcategory consisting of objects
$U/X=(X,h)\in\Sch/X$ such that $h:U\to X$ is a smooth morphism.
\end{itemize}
For a field $k$, we consider $\Sch/k:=\Sch/\Spec k$ the category of schemes over $\Spec k$. We then denote by
\begin{itemize}
\item $\Var(k)=\Sch^{ft}/k\subset\Sch/k$ the full subcategory consisting of algebraic varieties over $k$, 
i.e. schemes of finite type over $k$,
\item $\PVar(k)\subset\QPVar(k)\subset\Var(k)$ the full subcategories consisting of quasi-projective varieties and projective varieties respectively, 
\item $\PSmVar(k)\subset\SmVar(k)\subset\Var(k)$ the full subcategories consisting of smooth varieties and smooth projective varieties respectively.
\end{itemize}
For a morphism of field $\sigma:k\hookrightarrow K$, we have the extention of scalar functor
\begin{eqnarray*}
\otimes_kK:\Sch/k\to\Sch/K, \; X\mapsto X_K:=X_{K,\sigma}:=X\otimes_kK, \; (f:X'\to X)\mapsto (f_K:=f\otimes I:X'_K\to X_K).
\end{eqnarray*}
which is left ajoint to the restriction of scalar 
\begin{eqnarray*}
Res_{k/K}:\Sch/K\to\Sch/k, \; X=(X,a_X)\mapsto X=(X,\sigma\circ a_X), \; (f:X'\to X)\mapsto (f:X'\to X)
\end{eqnarray*}
The adjonction maps are 
\begin{itemize}
\item for $X\in\Sch/k$, the projection $\pi_{k/K}(X):X_K\to X$ in $\Sch/k$,
for $X=\cup_iX_i$ an affine open cover with $X_i=\Spec(A_i)$ we have by definition $\pi_{k/K}(X_i)=n_{k/K}(A_i)$,
\item for $X\in\Sch/K$, $I\times\Delta_K:X\hookrightarrow X_K=X\times_KK\otimes_kK$ in $\Sch/K$,
where $\Delta_K:K\otimes_kK\to K$ is the diagonal which is given by for $x,y\in K$, $\Delta_K(x,y)=x-y$.
\end{itemize}
The extention of scalar functor restrict to a functor
\begin{eqnarray*}
\otimes_kK:\Var(k)\to\Var(K), \; X\mapsto X_K:=X_{K,\sigma}:=X\otimes_kK, \; (f:X'\to X)\mapsto (f_K:=f\otimes I:X'_K\to X_K).
\end{eqnarray*}
and for $X\in\Var(k)$ we have $\pi_{k/K}(X):X_K\to X$ the projection in $\Sch/k$.
An algebraic variety $X\in\Var(K)$ is said to be defined over $k$ if there exists $X_0\in\Var(k)$
such that $X\simeq X_0\otimes_kK$ in $\Var(K)$. 
For $X=(X,a_X)\in\Var(k)$, we have $\Sch^{ft}/X=\Var(k)/X$ since for $f:X'\to X$ a morphism of schemes of finite type,
$(X',a_X\circ f)\in\Var(k)$ is the unique structure of variety over $k$ of $X'\in\Sch$ such that $f$ becomes a morphism 
of algebraic varieties over $k$, in particular we have
\begin{itemize}
\item $X^{et}\subset\Sch^{ft}/X=\Var(k)/X$,
\item $X^{sm}\subset\Sch^{ft}/X=\Var(k)/X$.
\end{itemize}
A morphism $f:X'\to X$ with $X,X'\in\Var(K)$ is said to be defined over $k$ if 
$X\simeq X_0\otimes_kK$ and $X'\simeq X'_0\otimes_kK$ are defined over $k$ and 
$\Gamma_f=\Gamma_{f_0}\otimes_kK\subset X'\times X$ is defined over $k$, so that $f_0\otimes_kK=f$ with $f_0:X'_0\to X_0$.

\item For $X\in\Sch$ and $s\in\mathbb N$, we denote by $X_{(s)}\subset X$ its points of dimension $s$, 
in particular $X_{(0)}\subset X$ are the closed points of $X$.

\item For $X\in\Sch$ and $k$ a field we denote by $X(k):=\Hom_{Sch}(\Spec k,X)$ the $k$ points of $X$.
We get $X(k)_{in}\subset X$ the image of the $k$-points of $X$.
For $k\subset k'$ a subfield, $\mathbb A^N_{k'}(k)_{in}=k^N\subset k^{'N}\subset\mathbb A^N_{k'}$ and 
$\mathbb A^N_k(k')_{in}=\pi_{k/k'}(\mathbb A^N_k)(k^{'N})\subset\mathbb A^N_k$. 

\item For $X\in\Sch$, we denote $X^{pet}\subset\Sch/X$ the pro etale site (see \cite{BSch})
which is the full subcategory of $\Sch/X$ whose object consists of weakly etale maps $U\to X$ (that is flat maps
$U\to X$ such that $\Delta_U:U\to U\times_XU$ is also flat) and whose topology is generated by fpqc covers.
We then have the canonical morphism of site
\begin{equation*}
\nu_X:X^{pet}\to X, (U\to X)\mapsto (U\to X)
\end{equation*}  
For $F\in C(X^{et})$, 
\begin{equation*}
\ad(\nu_X^*,R\nu_{X*})(F):F\to R\nu_{X*}\nu_X^*F 
\end{equation*}
is an isomorphism in $D(X^{et})$, in particular, for each $n\in\mathbb Z$
\begin{equation*}
\nu_X^*:=H^n\Gamma(X,k):\mathbb H^n_{et}(X,F)\xrightarrow{\sim}\mathbb H^n_{pet}(X,\nu_X^*F) 
\end{equation*}
are isomorphisms, where
\begin{equation*}
k:=k\circ\ad(\nu_X^*,\nu_{X*})(E_{et}(F)):E_{et}(F)\to\nu_{X*}E_{pet}(\nu_X^*F)
\end{equation*}
is the canonical map in $C(X^{et})$ which is a quasi-isomorphism.
For $X\in\Sch$, we denote
\begin{itemize}
\item $\underline{\mathbb Z_p}_X:=\varprojlim_{n\in\mathbb N}\nu_X^*(\mathbb Z/p^n\mathbb Z)_{X^{et}}\in\PSh(X^{pet})$ 
the constant presheaf on $\mathcal X$,
\item $l_{p,\mathcal X}:=(p(*)):\underline{\mathbb Z_p}_{X}\to\nu_X^*(\mathbb Z/p\mathbb Z)_{X^{et}}$ 
the projection map in $\PSh(\mathbb N\times\mathcal X^{pet})$.
\end{itemize}

\item Let $k$ a field of characteristic zero and $k_0\subset k$ a subfield. 
We say that $k$ is of finite type over $k_0$ if $k$ is generated as a field by $k_0$ and a finite set 
$\left\{\alpha_1,\ldots,\alpha_r\right\}\subset k$ of elements of $k$, that is $k=k_0(\alpha_1,\ldots,\alpha_r)$.
If $k$ is of finite type over $k_0$ then it is of finite transcendence degree $d\in\mathbb N$ over $k_0$
and $k=k_0(\alpha_1,\ldots,\alpha_d)(\alpha_{d+1})$ with $\left\{\alpha_1,\ldots,\alpha_{d+1}\right\}\subset k$ 
such that $k_0(\alpha_1,\ldots,\alpha_d)=k_0(x_1,\ldots,x_d)$ and 
$\alpha_{d+1}$ is an algebraic element of $k$ over $k_0(\alpha_1,\ldots,\alpha_d)$.
Note that if $k$ is of finite type over $k_0$ then it is NOT algebraicly closed.
We denote $\bar k$ the algebraic closure of $k$. Then $\bar k$ is also transcendence degree $d$ over $k$.

\item Let $C$ a field of characteristic zero. Let $X\in\Var(C)$. Then there exist a subfield $k\subset C$
of finite type over $\mathbb Q$ such that $X$ is defined over $k$ that is $X\simeq X_0\otimes_k\mathbb C$
with $X_0\in\Var(k)$.

\item Let $k\subset\bar{\mathbb Q}$ be a number field, i.e. a finite extension of $\mathbb Q$.
There exists a finite set of prime number $\delta(k)$ such that for all prime number $p\in\mathbb N\backslash\delta(k)$ 
we have for all each embedding $\sigma_p:k\hookrightarrow\mathbb C_p$ $\mathbb Q_p\cap k=\mathbb Q$.

\item Let $X\in\Var(k)$. Considering its De Rham complex $\Omega_X^{\bullet}:=DR(X)(O_X):=\Omega_{X/k}^{\bullet}$,
we have for $j\in\mathbb Z$ its De Rham cohomology $H^j_{DR}(X):=\mathbb H^j(X,\Omega^{\bullet}_X)$.
If $X\in\SmVar(k)$, then $H^j_{DR}(X)=\mathbb H_{et}^j(X,\Omega^{\bullet}_X)$
since $\Omega^{\bullet}\in C(\SmVar(k))$ is $\mathbb A^1$ local and admits transfert (see \cite{B5}).

\item Let $X\in\Var(k)$. Let $X=\cup_{i=1}^sX_i$ an open affine cover. 
For $I\subset[1,\ldots,s]$, we denote $X_I:=\cap_{i\in I}X_i$. 
We get $X_{\bullet}\in\Fun(P([1,\ldots,s]),\Var(k))$.
Since quasi-coherent sheaves on affine noetherian schemes are acyclic, we have
for each $j\in\mathbb Z$, $H^j_{DR}(X)=\Gamma(X_{\bullet},\Omega^{\bullet}_{X^{\bullet}})$.

\item For $K$ a field which is complete with respect to a $p$-adic norm,
we consider $O_K\subset K$ the subring of $K$ consisting of integral elements, that is $x\in K$ such that $|x|\leq 1$.
\begin{itemize}
\item For $X\in\PVar(K)$, we will consider $X^{\mathcal O}\in\PSch/O_K$ a (non canonical) integral model of $X$, 
i.e. satisfying $X^{\mathcal O}\otimes_{O_K}K=X$
\item For $X\in\Var(K)$, we will consider $X^{\mathcal O}\in\Sch/O_K$ a (non canonical) integral model of $X$, 
i.e. $X^{\mathcal O}=\bar X^{\mathcal O}\backslash Z^{\mathcal O}$ for 
$\bar X\in\PVar(K)$ a compactification of $X$, $Z:=\bar X\backslash X$,
where $\bar X^{\mathcal O}\in\PSch/O_K$ is an integral model of $\bar X$ and 
$Z^{\mathcal O}\in\PSch/O_K$ is an integral model of $Z$. 
\end{itemize}
For $X\in\Var(K)$, we will consider $X^{\mathcal O}\in\Sch/O_K$ a (non canonical) integral model of $X$,
we then have the commutative diagram of sites
\begin{equation*}
\xymatrix{X^{pet}\ar[r]^{r}\ar[d]^{\nu_X} & X^{\mathcal O,pet}\ar[d]^{\nu_{X^{\mathcal O}}} \\
X^{et}\ar[r]^{r} & X^{\mathcal O,et}}, \; \; 
r(t:U\to X^{\mathcal O}):=(t\otimes_{O_K}K:U\otimes_{O_K}K\to X^{\mathcal O}\otimes_{O_K}K=X)
\end{equation*}

\item For $X\in\Sch$ and $p$ a prime number, 
we denote by $c:\hat X^{(p)}\to X$ the morphism in $\RTop$ which is the completion along the ideal generated by $p$.

\item Let $K$ a field which is complete with respect to a $p$-adic norm and $X\in\PVar(K)$ projective.
For $X^{\mathcal O}\in\PSch/O_K$ an integral model of $X$, i.e. satisfying $X^{\mathcal O}\otimes_{O_K}K=X$,
we consider the morphism in $\RTop$ 
\begin{equation*}
c:=(c\otimes I):\hat X^{(p)}:=\hat X^{\mathcal O,(p)}\otimes_{O_K}K\to X^{\mathcal O}\otimes_{O_K}K=X. 
\end{equation*}
We have then, by GAGA (c.f. EGA 3), for $F\in\Coh_{O_X}(X)$ a coherent sheaf of $O_X$ module, 
$c^*:H^k(X^{\mathcal O},F)\xrightarrow{\sim}H^k(\hat X^{\mathcal O,(p)},c^{*mod}F)$ for all $k\in\mathbb Z$,
in particular 
\begin{equation*}
c^*:\mathbb H^k(X,\Omega_X^{\bullet\geq l})\xrightarrow{\sim}
\mathbb H^k(\hat X^{(p)},\Omega_{\hat X^{(p)}}^{\bullet\geq l}) 
\end{equation*}
for all $k,l\in\mathbb Z$.

\item Denote by $\AnSp(\mathbb C)\subset\RTop$ the full subcategory of analytic spaces over $\mathbb C$,
and by $\AnSm(\mathbb C)\subset\AnSp(\mathbb C)$ the full subcategory of smooth analytic spaces (i.e. complex analytic manifold).
For $X\in\AnSp(\mathbb C)$, we denote by $X^{et}\subset\AnSp(\mathbb C)/X$ 
the full subcategory consisting of objects $U/X=(X,h)\in\AnSp(\mathbb C)/X$ such that $h:U\to X$ is an etale morphism.
By the Weirstrass preparation theorem (or the implicit function theorem if $U$ and $X$ are smooth),
a morphism $r:U\to X$ with $U,X\in\AnSp(\mathbb C)$ is etale if and only if it is an isomorphism local.
Hence for $X\in\AnSp(\mathbb C)$, the morphism of site $\pi_X:X^{et}\to X$ is an isomorphism of site.

\item Denote by $\CW\subset\Top$ the full subcategory of $CW$ complexes.
Denote by $\Diff(\mathbb R)\subset\RTop$ the full subcategory of differentiable (real) manifold.  

\item Let $k\subset\mathbb C$ a subfield. For $X\in\Var(k)$, we denote by 
\begin{equation*}
\an_X:X^{an}_{\mathbb C}\xrightarrow{\sim}X^{an,et}_{\mathbb C}
\xrightarrow{\an_{X^{et}_{\mathbb C}}}X^{et}_{\mathbb C}\xrightarrow{\pi_{k/\mathbb C}(X^{et})}X^{et},
\end{equation*}
the morphism of site given by the analytical functor.

\item Let $k\subset\mathbb C$ a subfield. For $X\in\Var(k)$, we denote by
\begin{equation*}
\alpha(X):\mathbb C_{X_{\mathbb C}^{an}}\hookrightarrow\Omega^{\bullet}_{X^{an}_{\mathbb C}} 
\end{equation*}
the canonical embedding in $C(X_{\mathbb C}^{an})$. It induces the embedding in $C(X_{\mathbb C}^{an})$
\begin{equation*}
\beta(X):2i\pi\mathbb Z_{X_{\mathbb C}^{an}}
\xrightarrow{\iota_{2i\pi\mathbb Z/\mathbb C,X_{\mathbb C}^{an}}}
\mathbb C_{X_{\mathbb C}^{an}}\xrightarrow{\alpha(X)}\Omega^{\bullet}_{X^{an}_{\mathbb C}} 
\end{equation*}
For $X\in\SmVar(k)$, $\alpha(X)$ is a quasi-isomorphism by the holomorphic Poincare lemma.

\item For $X\in\Sch$ noetherian irreducible and $d\in\mathbb N$, 
we denote by $\mathcal Z^d(X)$ the group of algebraic cycles of codimension $d$,
which is the free abelian group generated by irreducible closed subsets of codimension $d$.
\begin{itemize}
\item For $X\in\Sch$ noetherian irreducible and $d\in\mathbb N$, we denote by 
$\mathcal Z^d(X,\bullet)\subset\mathcal Z^d(X\times\square^{\bullet})$ the Bloch cycle complex
which is the subcomplex which consists of algebraic cycles which intersect $\square^*$ properly.
\item For $X\in\Var(k)$ irreducible and $d,n\in\mathbb N$, we denote by 
$\mathcal Z^d(X,n)^{\partial=0}_{hom}\subset\mathcal Z^d(X,\bullet)^{\partial=0}$ the 
the subabelian group consisting of algebraic cycles which are homologicaly trivial.
\item For $f:X\to S$ a dominant morphism with $S,X\in\Sch$ noetherian irreducible and $d\in\mathbb N$, we denote by 
$\mathcal Z^d(X,\bullet)^f\subset\mathcal Z^d(X,\bullet)$ 
the subcomplex consisting of algebraic cycles which are flat over $S$.
\item For $f:X\to S$ a dominant morphism with $S,X\in\Var(k)$ irreducible and $d,n\in\mathbb N$ we denote by
$\mathcal Z^d(X,n)_{fhom}^{f,\partial=0}\subset\mathcal Z^d(X,n)^{f,\partial=0}$ 
the subabelian group consisting of algebraic cycles which are flat over $S$ and homological trivial on fibers.
\end{itemize}

\item For $X\in\Var(k)$ and $Z\subset X$ a closed subset, denoting $j:X\backslash Z\hookrightarrow X$ the open complementary, 
we will consider
\begin{equation*}
\Gamma^{\vee}_Z\mathbb Z_X:=\Cone(\ad(j_{\sharp},j^*)(\mathbb Z_X):
\mathbb Z_X\hookrightarrow\mathbb Z_X)\in C(\Var(k)^{sm}/X)
\end{equation*}
and denote for short $\gamma^{\vee}_Z:=\gamma^{\vee}_Z(\mathbb Z_X):\mathbb Z_X\to\Gamma^{\vee}_Z\mathbb Z_X$
the canonical map in $C(\Var(k)^{sm}/X)$. Denote $a_X:X\to\Spec k$ the structural map.
For $X\in\Var(k)$ and $Z\subset X$ a closed subset, we have the motive of $X$ with support in $Z$ defined as 
\begin{equation*}
M_Z(X):=a_{X!}\Gamma^{\vee}_Za_X^!\mathbb Z\in\DA(k).
\end{equation*}
If $X\in\SmVar(k)$, we will also consider
\begin{equation*}
a_{X\sharp}\Gamma^{\vee}_Z\mathbb Z_X:=\Cone(a_{X\sharp}\circ\ad(j_{\sharp},j^*)(\mathbb Z_X):
\mathbb Z(U)\hookrightarrow\mathbb Z(X))=:\mathbb Z(X,X\backslash Z)\in C(\SmVar(k)).
\end{equation*}
Then for $X\in\SmVar(k)$ and $Z\subset X$ a closed subset
\begin{equation*}
M_Z(X):=a_{X!}\Gamma^{\vee}_Za_X^!\mathbb Z
=a_{X\sharp}\Gamma^{\vee}_Z\mathbb Z_X=:\mathbb Z(X,X\backslash Z)\in\DA(k).
\end{equation*}

\item We denote $\mathbb I^n:=[0,1]^n\in\Diff(\mathbb R)$ (with boundary).
For $X\in\Top$ and $R$ a ring, we consider its singular cochain complex
\begin{equation*}
C^*_{\sing}(X,R):=(\mathbb Z\Hom_{\Top}(\mathbb I^*,X)^{\vee})\otimes R 
\end{equation*}
and for $l\in\mathbb Z$ its singular cohomology $H^l_{\sing}(X,R):=H^nC^*_{\sing}(X,R)$.
In particular, we get by functoriality the complex 
\begin{equation*}
C^*_{X,R\sing}\in C_R(X), \; (U\subset X)\mapsto C^*_{\sing}(U,R)
\end{equation*}
We will consider the canonical embedding 
\begin{equation*}
C^*\iota_{2i\pi\mathbb Z/\mathbb C}(X):C^*_{\sing}(X,2i\pi\mathbb Z)\hookrightarrow C^*_{\sing}(X,\mathbb C), \,
\alpha\mapsto\alpha\otimes 1
\end{equation*}
whose image consists of cochains $\alpha\in C^j_{\sing}(X,\mathbb C)$ such that $\alpha(\gamma)\in 2i\pi\mathbb Z$
for all $\gamma\in\mathbb Z\Hom_{\Top}(\mathbb I^*,X)$.
We get by functoriality the embedding in $C(X)$
\begin{eqnarray*}
C^*\iota_{2i\pi\mathbb Z/\mathbb C,X}:C^*_{X,2i\pi\mathbb Z,\sing}\hookrightarrow C^*_{X,\mathbb C,\sing}, \\
(U\subset X)\mapsto 
(C^*\iota_{2i\pi\mathbb Z/\mathbb C}(U):C^*_{\sing}(U,2i\pi\mathbb Z)\hookrightarrow C^*_{\sing}(U,\mathbb C))
\end{eqnarray*}
We recall we have 
\begin{itemize}
\item For $X\in\Top$ locally contractile, e.g. $X\in\CW$, and $R$ a ring, the inclusion in $C_R(X)$
$c_X:R_X\to C^*_{X,R\sing}$ is by definition an equivalence top local and that we get 
by the small chain theorem, for all $l\in\mathbb Z$, an isomorphism 
$H^lc_X:H^l(X,R_X)\xrightarrow{\sim}H^l_{\sing}(X,R)$.
\item For $X\in\Diff(\mathbb R)$, the restriction map 
\begin{equation*}
r_X:\mathbb Z\Hom_{\Diff(\mathbb R)}(\mathbb I^*,X)^{\vee}\to C^*_{\sing}(X,R), \; 
w\mapsto w:(\phi\mapsto w(\phi))
\end{equation*}
is a quasi-isomorphism by Whitney approximation theorem.
\end{itemize}

\item Let $X\in\AnSm(\mathbb C)$. Let $X=\cup_{i=1}^r\mathbb D_i$ an open cover with $\mathbb D_i\simeq D(0,1)^d$. 
Since a convex open subset of $\mathbb C^d$ is biholomorphic to an open ball we have 
$\mathbb D_I:=\cap_{i\in I}\mathbb D_i\simeq D(0,1)^d$ (where $d$ is the dimension of a connected component of $X$).
We get $\mathbb D_{\bullet}\in\Fun(P([1,\ldots,r]),\AnSm(\mathbb C))$.

\item For $k$ a field, we denote by $\Vect(k)$ the category of vector spaces and 
$\Vect_{fil}(k)$ the category of filtered vector spaces.
Let $k\subset K$ a field extension of field of characteristic zero. 
\begin{itemize}
\item For $(V,F)\in\Vect_{fil}(k)$, we get a filtered $K$ vector space 
$(V\otimes_kK,F)\in\Vect_{fil}(K)$ by $F^j(V\otimes_kK):=(F^jV)\otimes_kK$.
In this case, we say that the filtration $F$ on $V\otimes_kK$ is defined over $k$.
\item For $(V',F)\in\Vect_{fil}(K)$ and $h:V\otimes_kK\xrightarrow{\sim}V'$ and isomorphism of $K$ vector space, we get
$(V,F_h)\in\Vect_{fil}(k)$ by $F_h^jV:=h^{-1}(F^jV')\cap V$ 
(considering the canonical embedding $n:V\hookrightarrow V\otimes_kK$, $n(v):=v\otimes 1$).
\item For $(V,F)\in\Vect_{fil}(k)$, we have $F^j(V\otimes_kK)\cap V=F^jV$.
\item For $(V',F)\in\Vect_{fil}(K)$ and $h:V\otimes_kK\xrightarrow{\sim}V'$ and isomorphism of $K$ vector space,
we have $h((F_h^jV)\otimes_kK)\subset F^jV'$. Of course this inclusion is NOT an equality in general.
The filtration $F$ on $V'$ is NOT defined over $k$ in general.
\end{itemize}

\item We also consider 
\begin{itemize}
\item $\Top_2$ the category whose objects are couples $(X,Y)$ with $X\in\Top$ and $Y\subset X$ a subset
and whose set of morphisms $\Hom((X',Y'),(X,Y))$ consists of $f:X'\to X$ continuous such that $Y'\subset f^{-1}(Y)$
(i.e. $f(Y')\subset Y$),
\item $\RTop_2$ the category whose objects are couples $(X,Y)$ with $X=(X,O_X)\in\RTop$ and $Y\subset X$ a subset
and whose set of morphisms $\Hom((X',Y'),(X,Y))$ consists of $f:X'\to X$ of ringed spaces such that $Y'\subset f^{-1}(Y)$,
\item $\Top^2$ the category whose objects are couples $(X,Z)$ with $X\in\Top$ and $Z\subset X$ a closed subset
and whose set of morphisms $\Hom((X',Z'),(X,Z))$ consists of $f:X'\to X$ continuous such that $f^{-1}(Z)\subset Z'$
(i.e. $f(X'\backslash Z')\subset X\backslash Z$),
in particular we have the canonical functor $\Top^2\to\Top_2$, $(X,Z)\mapsto (X,X\backslash Z)$,
\item $\RTop^2$ the category whose objects are couples $(X,Z)$ with $X=(X,O_X)\in\RTop$ and $Z\subset X$ a closed subset
and whose set of morphisms $\Hom((X',Z'),(X,Z))$ consists of $f:X'\to X$ of ringed spaces such that $f^{-1}(Z)\subset Z'$,
in particular we have the canonical functor $\RTop^2\to\RTop_2$, $(X,Z)\mapsto (X,X\backslash Z)$.
\end{itemize}
A (generalized) cohomology theory is in particular a functor $H^*:\Top_2\to\GrAb$, e.g singular cohomology 
\begin{equation*}
H^*_{\sing}:\Top^2\to\Gr\Ab,(X,Y)\mapsto H^*_{\sing}(X,Y,R).
\end{equation*}
where $R$ is a commutative ring.
It restrict to a functor $H^*:\Top^2\to\GrAb, (X,Z)\mapsto H^*_Z(X):=H^*(X,X\backslash Z)$. 

\item Denote $\Sch^2\subset\RTop^2$ the subcategory
whose objects are couples $(X,Z)$ with $X=(X,O_X)\in\Sch$ and $Z\subset X$ a closed subset
and whose set of morphisms $\Hom((X',Z'),(X,Z))$ consists of $f:X'\to X$ of locally ringed spaces
such that $f^{-1}(Z)\subset Z'$.

\item Let $k$ a field of characteristic zero. 
Denote $\SmVar^2(k)\subset\Var^2(k)\subset\Sch^2/k$ the full subcategories
whose objects are $(X,Z)$ with $X\in\Var(k)$, resp. $X\in\SmVar(k)$, and $Z\subset X$ is a closed subset,
and whose morphisms $\Hom((X',Z')\to(X,Z))$ consists of $f:X'\to X$ of schemes over $k$ 
such that $f^{-1}(Z)\subset Z'$.

\item Let $k$ a field of characteristic zero. Let 
\begin{equation*}
H^*:\SmVar^2(k)\to\Grd\AbCat, (X,Z)\mapsto H^*_Z(X) 
\end{equation*}
a mixed Weil cohomology theory in sense of \cite{CD} 
(e.g. (filtered) De Rham, etale or Betti cohomology, Hodge or $p$ adic realization). 
For $X\in\SmVar(k)$ and $Z\subset X$ a closed subset, we denote 
\begin{equation*}
H^*_Z(X)^0:=\ker(H^*_Z(X)\to H^*(X)).
\end{equation*}
For $X\in\SmVar(k)$ and $Z\in\mathcal Z^d(X,n)^{\partial=0}_{hom}$, 
we consider the subobject $H^{2d-1}(U)^{[Z]}\subset H^{2d-1}(U)$ where 
$j:U:=(X\times\square^n)\backslash|Z|\hookrightarrow X\times\square^n$ is the complementary open subset,
given by the pullback by $H^{2d}_Z(X\times\square^n)^{[Z]}:=[Z]\subset H^{2d}_Z(X\times\square^n)^0$
\begin{equation*}
\xymatrix{0\ar[r] & H^{2d-1}(X\times\square^n)=H^{2d-n-1}(X)\ar[r]^{j^*} & H^{2d-1}(U)\ar[r]^{\partial} &
H^{2d}_Z(X\times\square^n)^0\ar[r] & 0 \\
0\ar[r] & H^{2d-1}(X\times\square^n)=H^{2d-n-1}(X)\ar[r]^{j^*}\ar[u]^{=} & 
H^{2d-1}(U)^{[Z]}\ar[u]^{\subset}\ar[r]^{\partial} & H^{2d}_Z(X\times\square^n)^{[Z]}:=[Z]\ar[u]^{\subset}\ar[r] & 0}
\end{equation*}
of the first row exact sequence. In particular the second row is also an exact sequence.

\item We denote by $\log\Sch$ the category of log schemes whose objects are couples $(X,M):=(X,M,\alpha)$ 
where $X=(X,O_X)\in\Sch$, $M\in\Shv(X)$ is a sheaf of monoid and $\alpha:M\to O_X$ is a morphism of sheaves of monoid. 
In particular we have a canonical functor 
\begin{equation*}
\Sch^2\to\logSch, (X,Z)\mapsto (X,M_Z), M_Z:=(f\in O_X s.t. f_{|X\backslash Z}\in O_{X\backslash Z}^*)\subset O_X
\end{equation*}
Let $k$ a field of characteristic zero. 
We denote by $\log\Var(k)\subset\log\Sch/k$ the full subcategory of log varieties.

\item For $k$ a field, we denote by $\Vect(k)=\Mod(k)$ the category of vector spaces over $k$
and $C(k):=C(\Vect(k))$ the category of complexes of vector spaces over $k$,
by $\Vect_{fil}(k)$ the category of filtered vector spaces over $k$ and $C_{fil}(k):=C_{fil}(\Vect(k))$
the category of filtered complexes of vector spaces over $k$.

\item We assume all field have a transcendence basis at most countable, so that field of characteristic zero
admit embeddings in $\mathbb C$ and in $\mathbb C_p$ for each prime number $p\in\mathbb N$.

\end{itemize}

We have the followings facts :

\begin{itemize}
\item Let $k$ a field of characteristic zero. Denote $G:=Gal(\bar k/k)$ its absolute Galois group.
Then the functor
\begin{equation*}
\Gamma(\bar k)(-):\PSh(k^{et})\to\Mod(\bar k,G), \; F\mapsto\Gamma(\bar k,F)
\end{equation*}
is an equivalence of category whose inverse is
\begin{equation*}
G(-):\Mod(\bar k,G)\to\PSh(k^{et}), \; V\mapsto G(V):=V:=((k'/k)\mapsto V^{Aut(k'/k)}). 
\end{equation*}
In particular, for each $V\in\Mod(\bar k,G)$ and $j\in\mathbb Z$, we get an isomorphism
\begin{equation*}
H^jG(V):H^j(G,V)\xrightarrow{\sim}\Ext_G^j(\bar k,V).
\end{equation*}
\item Let $k\subset K$ a field extension. 
\begin{itemize}
\item Let $X\in\Var(k)$.
We have then the canonical isomorphism in $C_{Aut(K/k),fil}(X_K)$
\begin{equation*}
w(k/K):(\Omega^{\bullet}_X\otimes_kK,F_b)\xrightarrow{\sim}(\Omega^{\bullet}_{X_K},F_b) 
\end{equation*}
given by the universal property of derivation of a ring.
\item Let $X\in\SmVar(k)$. 
Let $\bar X\in\PSmVar(k)$ a smooth compactification of $X$ with $D:=\bar X\backslash X$ a normal crossing divisor.
We have then the canonical isomorphism in $C_{Aut(K/k),fil}(\bar X_K)$
\begin{equation*}
w(k/K):(\Omega^{\bullet}_{\bar X}(\log D)\otimes_kK,F_b)\xrightarrow{\sim}(\Omega^{\bullet}_{\bar X_K}(\log D),F_b) 
\end{equation*}
given by the preceeding point. In particular, we get for all $j,l\in\mathbb Z$,
\begin{itemize}
\item $F^lH^jw(k/K):F^lH^j_{DR}(X)\otimes_kK\xrightarrow{\sim}F^lH^j_{DR}(X_K)$,
\item $H^jw(k/K):H^j_{DR}(X)\xrightarrow{\sim}H^j_{DR}(X_K)^G$.
\end{itemize}
\end{itemize}
\item An affine scheme $U\in\Sch$ is said to be $w$-contractible if any faithfully flat weakly etale map $V\to U$, 
$V\in\Sch$, admits a section. We will use the facts that (see \cite{BSch}):
\begin{itemize}
\item Any scheme $X\in\Sch$ admits a pro-etale cover $(r_i:X_i\to X)_{i\in I}$ 
with for each $i\in I$, $X_i$ a $w$-contractile affine scheme and $r_i:X_i\to X$ a weakly etale map.
For $X\in\Var(k)$ with $k$ a field, we may assume $I$ finite since the topological space $X$ is then quasi-compact.
\item If $U\in\Sch$ is a $w$-contractible affine scheme, then for any sheaf $F\in\Shv(U^{pet})$, 
$H_{pet}^i(U,F)=0$ for $i\neq 0$ since $\Gamma(U,-)$ is an exact functor.
\end{itemize}
\end{itemize}

We introduce the logarithmic De Rham complexes

\begin{defi}\label{wlogdef}
\begin{itemize}
\item[(i)] Let $X=(X,O_X)\in\RCat$ a ringed topos, we have in $C(X)$ the subcomplex of presheaves of abelian groups
\begin{eqnarray*}
OL_X:\Omega^{\bullet}_{X,\log}\hookrightarrow\Omega_X^{\bullet}, \; 
\mbox{s.t. for} \; X^o\in X \; \mbox{and}\;  p\in\mathbb N, \, p\geq 1, \\
\Omega^p_{X,\log}(X^0):=
<df_{\alpha_1}/f_{\alpha_1}\wedge\cdots\wedge df_{\alpha_p}/f_{\alpha_p}, f_{\alpha_k}\in\Gamma(X^o,O_X)^*>
\subset\Omega^p_X(X^0),
\end{eqnarray*}
where $\Omega_X^{\bullet}:=DR(X)(O_X)\in C(X)$ is the De Rham complex and 
$\Gamma(X^o,O_X)^*\subset\Gamma(X^o,O_X)$ is the multiplicative subgroup 
consisting of invertible elements for the multiplication,
here $<,>$ stand for the sub-abelian group generated by. By definition, for $w\in\Omega^p_X(X^o)$, 
$w\in\Omega^p_{X,\log}(X^o)$ if and only if there exists $(n_i)_{1\leq i\leq s}\in\mathbb Z$ and 
$(f_{i,\alpha_k})_{1\leq i\leq s,1\leq k\leq p}\in\Gamma(X^o,O_X)^*$ such that
\begin{equation*}
w=\sum_{1\leq i\leq s}n_idf_{i,\alpha_1}/f_{i,\alpha_1}\wedge\cdots\wedge df_{i,\alpha_p}/f_{i,\alpha_p}
\in\Omega^p_X(X^o).
\end{equation*}
For $p=0$, we set $\Omega^0_{X,\log}:=\mathbb Z$.
Let $f:X'=(X',O_{X'})\to X=(X,O_X)$ a morphism with $X,X'\in\RCat$.
Consider the morphism $\Omega(f):\Omega_X^{\bullet}\to f_*\Omega_{X'}^{\bullet}$ in $C(X)$.
Then, $\Omega(f)(\Omega^{\bullet}_{X,\log})\subset f_*\Omega^{\bullet}_{X',\log}$.
\item[(ii)] For $k$ a field, we get from (i), for $X\in\Var(k)$, the embedding in $C(X)$
\begin{equation*}
OL_X:\Omega^{\bullet}_{X,\log}\hookrightarrow\Omega_X^{\bullet}:=\Omega^{\bullet}_{X/k},
\end{equation*}
such that, for $X^o\subset X$ an open subset and $w\in\Omega^p_X(X^o)$,
$w\in\Omega^p_{X,\log}(X^o)$ if and only if there exists $(n_i)_{1\leq i\leq s}\in\mathbb Z$ and 
$(f_{i,\alpha_k})_{1\leq i\leq s,1\leq k\leq p}\in\Gamma(X^o,O_X)^*$ such that
\begin{equation*}
w=\sum_{1\leq i\leq s}n_idf_{i,\alpha_1}/f_{i,\alpha_1}\wedge\cdots\wedge df_{i,\alpha_p}/f_{i,\alpha_p}
\in\Omega^p_X(X^o),
\end{equation*}
and for $p=0$, $\Omega^0_{X,\log}:=\mathbb Z$. Let $k$ a field. We get an embedding in $C(\Var(k))$
\begin{eqnarray*}
OL:\Omega^{\bullet}_{/k,\log}\hookrightarrow\Omega^{\bullet}_{/k}, \; 
\mbox{given by}, \, \mbox{for} \, X\in\Var(k), \\
OL(X):=OL_X:\Omega^{\bullet}_{/k,\log}(X):=\Gamma(X,\Omega^{\bullet}_{X,\log})
\hookrightarrow\Gamma(X,\Omega^{\bullet}_X)=:\Omega^{\bullet}_{/k}(X)
\end{eqnarray*}
and its restriction to $\SmVar(k)\subset\Var(k)$.
\item[(iii)]Let $K$ a field of characteristic zero which is complete for a $p$-adic norm. Let $X\in\Var(K)$. 
Let $X^{\mathcal O}\in\Sch/O_K$ an integral model of $X$, in particular
$X^{\mathcal O}\otimes_{O_K}K=X$. We have then the morphisms of sites $r:X^{et}\to X^{\mathcal O,et}$
and $r:X^{pet}\to X^{\mathcal O,pet}$ such that $\nu_{X^{\mathcal O}}\circ r=r\circ\nu_X$.
We then consider the embedding of $C(X^{et})$
\begin{equation*}
OL_X:=OL_X\circ\iota:
\Omega_{X^{et},\log,\mathcal O}^{\bullet}:=r^*\Omega_{X^{\mathcal O,et},\log}^{\bullet}
\hookrightarrow\Omega_{X^{et},\log}^{\bullet}\hookrightarrow\Omega_{X^{et}}^{\bullet}
\end{equation*}
consisting of integral logarithimc De Rham forms,
with $\iota:r^*\Omega_{X^{\mathcal O,et},\log}^{\bullet}\hookrightarrow\Omega_{X^{et},\log}^{\bullet}$.
We will also consider the embedding of $C(X^{\mathcal O,pet})$
\begin{eqnarray*}
OL_{\hat X^{\mathcal O,(p)}}:=(OL_{X^{\mathcal O}/p^n})_{n\in\mathbb N}:
\Omega_{\hat X^{(p)},\log,\mathcal O}^{\bullet}:=
\varprojlim_{n\in\mathbb N}\nu_{X^{\mathcal O}}^*\Omega^{\bullet}_{X^{\mathcal O,et}/p^n,\log} \\
\hookrightarrow\Omega_{\hat X^{\mathcal O,(p)}}^{\bullet}:=
\varprojlim_{n\in\mathbb N}\nu_{X^{\mathcal O}}^*\Omega^{\bullet}_{X^{\mathcal O,et}/p^n/(O_K/p^n)}
\end{eqnarray*}
where we recall $c:\hat X^{\mathcal O,(p)}\to X^{\mathcal O}$ the morphism in $\RTop$ 
is given by the completion along the ideal generated by $p$. 
We then get the embeddings of $C(X^{pet})$
\begin{equation*}
m\circ(OL_X\otimes I):
\Omega_{X^{pet},\log,\mathcal O}^{\bullet}\otimes\mathbb Z_p:=
\Omega_{X^{pet},\log,\mathcal O}^{\bullet}\otimes\underline{\mathbb Z_p}_X
\hookrightarrow\Omega_{X^{pet}}^{\bullet}, \; 
(w\otimes\lambda_n)_{n\in\mathbb N}\mapsto(\lambda_n)_{n\in\mathbb N}\cdot w
\end{equation*}
and
\begin{equation*}
OL_{\hat X^{(p)}}:=r^*OL_{\hat X^{\mathcal O,(p)}}:
r^*\Omega_{\hat X^{(p)},\log,\mathcal O}^{\bullet}\hookrightarrow r^*\Omega_{\hat X^{\mathcal O,(p)}}^{\bullet}
\hookrightarrow\Omega_{\hat X^{(p)}}^{\bullet}:=\Omega_{\hat X^{(p)}/K}^{\bullet},
\end{equation*}
where we recall $c:=(c\otimes_{O_K}K):\hat X^{(p)}\to X$ the morphism in $\RTop$. 
Note that the inclusion $\Omega^l_{X^{\mathcal O,et},\log}/p^n\subset\Omega^{\bullet}_{X^{\mathcal O,et}/p^n,\log}$
is strict in general.
Note that 
\begin{equation*}
\Omega_{X^{pet},\log,\mathcal O}^{\bullet}:=\nu_X^*\Omega_{X^{et},\log,\mathcal O}^{\bullet}\in C(X^{pet}),
\end{equation*}
but 
\begin{eqnarray*}
\underline{\mathbb Z_p}_X:=\varprojlim_{n\in\mathbb N}\nu_X^*(\mathbb Z/p^n\mathbb Z)_{X^{et}}\in C(X^{pet}), \, 
\Omega_{\hat X^{(p)}}^{\bullet}\in C(X^{pet}), \, \mbox{and} \\
r^*\Omega_{\hat X^{(p)},\log,\mathcal O}^{\bullet}
:=r^*\varprojlim_{n\in\mathbb N}\nu_{X^{\mathcal O}}^*\Omega_{X^{\mathcal O,et}/p^n,\log}
\in C(X^{pet}) 
\end{eqnarray*}
are NOT the pullback of etale sheaves by $\nu_X$. 
We consider $\Sch^{int}/O_K:=O(\PSch^2/O_K)\subset\Sch^{ft}/O_K$ the full subcategory
consisting of integrable models of algebraic varieties over $K$,
where $O:\PSch^2/O_K\to\Sch^{ft}/O_K, \; O(X,Z)=X\backslash Z$ is the canonical functor. 
We denote by $\Var(K)^{pet}$ and $(\Sch^{int}/O_K)^{pet}$ the big pro-etale sites.
We have then the morphism of sites $r:\Var(K)^{pet}\to(\Sch^{int}/O_K)^{pet}$.
We will consider the embedding of $C((\Sch^{int}/O_K)^{pet})$
\begin{eqnarray*}
OL_{/O_K,an}:\Omega^{\bullet,an}_{/K,\log,\mathcal O}\hookrightarrow\Omega_{/O_K}^{\bullet,an}, \;
\mbox{for} \, X^{\mathcal O}\in(\Sch^{int}/O_K)^{pet}, \\ 
OL_{/O_K,an}(X^{\mathcal O}):=OL_{\hat X^{\mathcal O,(p)}}(X^{\mathcal O}):
\Omega^{\bullet}_{\hat X^{(p)},\log,\mathcal O}(\hat X^{\mathcal O,(p)})
\hookrightarrow\Omega^{\bullet}_{\hat X^{\mathcal O,(p)}}(\hat X^{\mathcal O,(p)})
\end{eqnarray*}
We get the embedding of $C(\Var(K)^{pet})$
\begin{eqnarray*}
OL_{/K,an}:=r^*OL_{/O_K,an}:r^*\Omega^{\bullet,an}_{/K,\log,\mathcal O}
\hookrightarrow r^*\Omega_{/O_K}^{\bullet,an}\hookrightarrow\Omega_{/K}^{\bullet,an}
\end{eqnarray*}
and its restriction to $\SmVar(K)^{pet}\subset\Var(K)^{pet}$.
\end{itemize}
\end{defi}

Let $\sigma:k\hookrightarrow\mathbb C$ a subfield of finite type over $\mathbb Q$.
Consider $k\subset\bar k\subset\mathbb C$ the algebraic closure of $k$.
Let $X\in\Var(k)$. Let $p$ a prime number. Let $\sigma_p:k\hookrightarrow\mathbb C_p$ an embedding. 
We have then the following diagram in $C(\mathbb N\times X^{et})$
\begin{equation*}
\xymatrix{\an_{X*}\mathbb Z_{p,X_{\mathbb C}^{an}} & 
\an_{X*}\mathbb Z_{X_{\mathbb C}^{an}}\ar[r]^{\an_{X*}\beta(X)}\ar[l]_{(/p^*)} & 
\an_{X*}\Omega^{\bullet}_{X_{\mathbb C}^{an}} \\ 
\mathbb Z_{p,X_{\bar k}^{et}}\ar[u]^{\ad(\an_X^*,\an_{X*})(\mathbb Z_{p,X_{\bar k}^{et}})} 
\ar[r]^{\ad(\nu_X^*,\nu_{X*})(\mathbb Z_{p,X^{et}})} &
\nu_{X_{\bar k}*}\underline{\mathbb Z_p}_{X_{\bar k}^{pet}}
\ar[ldd]_{\ad(\pi_{\bar k/\mathbb C_p}(X_{\bar k}^{pet})^*,\pi_{\bar k/\mathbb C_p}(X_{\bar k}^{pet})_*)
(\underline{\mathbb Z_p}_{X_{\bar k}^{pet}})} & \,  \\
\, & \Omega^{\bullet}_{X^{et},\log}\ar[r]^{OL_{X^{et}}}\ar[d]_{\Omega(\pi_{k/\mathbb C_p}(X))}\ar[ruu] & 
\Omega^{\bullet}_{X^{et}}\ar[d]^{\Omega(\pi_{k/\hat k_{\sigma_p}}(X))}\ar[uu]_{\Omega(\an_X)} \\
\underline{\mathbb Z_p}_{X_{\mathbb C_p}^{pet}}\ar[r]^{\iota(X_{\mathbb C_p}^{pet})} &
\Omega^{\bullet}_{X^{pet}_{\mathbb C_p},\log,\mathcal O}\otimes\underline{\mathbb Z_p}_{X_{\mathbb C_p}^{pet}}
\ar[r]^{OL_{X_{\mathbb C_p}}\otimes I} & 
\Omega^{\bullet}_{X^{pet}_{\mathbb C_p}}\otimes_{O_{X_{\mathbb C_p}}}O\mathbb B_{dr,X_{\mathbb C_p}}}
\end{equation*}
with as above 
\begin{equation*}
\an_X:X_{\mathbb C}^{an}\simeq X_{\mathbb C}^{an,et}\xrightarrow{\an_X}
X_{\mathbb C}^{et}\xrightarrow{\pi_{\bar k/\mathbb C}(X_{\bar k}^{et})}X_{\bar k}^{et}, \; \;
\an_X:X_{\mathbb C}^{an}\simeq X_{\mathbb C}^{an,et}\xrightarrow{\an_X}
X_{\mathbb C}^{et}\xrightarrow{\pi_{k/\mathbb C}(X^{et})}X^{et}
\end{equation*}
In particular, we get for $X\in\Var(k)$ and $j\in\mathbb Z$, the canonical map
\begin{eqnarray*}
T(X):=H^jT(X):H^j_{\sing}(X_{\mathbb C}^{an},\mathbb Z)\xrightarrow{H^j(/p^*)}
H^j_{\sing}(X_{\mathbb C}^{an},\mathbb Z_p)\xrightarrow{\sim}H^j(X_{\mathbb C}^{an},\mathbb Z_p) \\
\xrightarrow{(\an_X^*)^{-1}}
H^j_{et}(X_{\mathbb C},\mathbb Z_p)\xrightarrow{(\pi_{\bar k/\mathbb C}(X)^*)^{-1}}H^j_{et}(X_{\bar k},\mathbb Z_p)
\end{eqnarray*}
for each $j\in\mathbb Z$, where $H^j(\an_X^*)$ is an isomorphism by the comparaison theorem between etale cohomology
and Betti cohomology with torsion coefficients (see SGA4).

Let $p\in\mathbb N$ a prime number. Let $k\subset\mathbb C_p$ a subfield. Let $X\in\SmVar(k)$. 
Take a compactification $j:X\hookrightarrow\bar X$ with $\bar X\in\PSmVar(k)$ 
such that $D:=\bar X\backslash X\subset\bar X$ is a normal crossing divisor. Then for each $j\in\mathbb Z$,
\begin{equation*}
H^j_{DR}(X_{\mathbb C_p})=H^j\Gamma(\bar X_{\mathbb C_p},
E_{zar}(\Omega^{\bullet}_{X_{\mathbb C_p}}(\log D_{\mathbb C_p}),F_b))
=H^j\Gamma(\bar X_{\mathbb C_p},E_{pet}(\Omega^{\bullet}_{X_{\mathbb C_p}}(\log D_{\mathbb C_p}),F_b))
\in\Vect_{fil}(\mathbb C_p).
\end{equation*}
By the complex case which follows from the Hodge decomposition, we see
after taking an isomorphism $\sigma:\mathbb C_p\xrightarrow{\sim}\mathbb C$ that  
the spectral sequence associated to 
$\Gamma(\bar X_{\mathbb C_p},E_{zar}(\Omega^{\bullet}_{X_{\mathbb C_p}}(\log D_{\mathbb C_p}),F_b))\in C_{fil}(\mathbb C_p)$
is $E_1$ degenerate. However there is no canonical splitting of the filtration on $H^j_{DR}(X_{\mathbb C_p})$
(it depend on the choice of a basis of this $\mathbb C_p$ vector space, or on the choice of such an isomorphism $\sigma$).
Then the canonical embedding in $C_{fil}(\bar X_{\mathbb C_p}^{pet})$
\begin{equation*}
OL_X:=j_*OL_X:j_*\Omega^{\bullet}_{X_{\mathbb C_p},\log,\mathcal O}
\hookrightarrow\Omega^{\bullet}_{X_{\mathbb C_p}}(\log D_{\mathbb C_p})\hookrightarrow 
j_*\Omega^{\bullet}_{X_{\mathbb C_p}}
\end{equation*}
induces in cohomology
\begin{eqnarray*}
H^jOL_X:\mathbb H_{pet}^j(X_{\mathbb C_p},\Omega^{\bullet\geq l}_{X_{\mathbb C_p},\log,\mathcal O})\to
\mathbb H_{pet}^j(X_{\mathbb C_p},\Omega^{\bullet\geq l}_{X_{\mathbb C_p}}(\log D_{\mathbb C_p}))
\xrightarrow{=}F^lH^j_{DR}(X_{\mathbb C_p})\hookrightarrow H^j_{DR}(X_{\mathbb C_p}).
\end{eqnarray*}
Similarly, the canonical embedding in $C_{fil}(\bar X_{\mathbb C_p}^{pet})$
\begin{equation*}
m\circ(OL_X\otimes I):j_*(\Omega^{\bullet}_{X_{\mathbb C_p},\log,\mathcal O}\otimes\mathbb Z_p)
\hookrightarrow\Omega^{\bullet}_{X_{\mathbb C_p}}(\log D_{\mathbb C_p})\hookrightarrow 
j_*\Omega^{\bullet}_{X_{\mathbb C_p}}
\end{equation*}
induces in cohomology
\begin{eqnarray*}
H^j(m\circ(OL_X\otimes I)):
\mathbb H_{pet}^j(X_{\mathbb C_p},\Omega^{\bullet\geq l}_{X_{\mathbb C_p},\log,\mathcal O}\otimes\mathbb Z_p)\to \\
\mathbb H_{pet}^j(X_{\mathbb C_p},\Omega^{\bullet\geq l}_{X_{\mathbb C_p}}(\log D_{\mathbb C_p}))
\xrightarrow{=}F^lH^j_{DR}(X_{\mathbb C_p})\hookrightarrow H^j_{DR}(X_{\mathbb C_p}).
\end{eqnarray*}
Consider, for each $j\in\mathbb Z$, a (non canonical) splitting
\begin{equation*}
\theta_j(X):H^j_{DR}(X_{\mathbb C_p})\xrightarrow{\sim}
\oplus_{0\leq l\leq j}H_{zar}^l(X_{\mathbb C_p},\Omega^{j-l}_{X_{\mathbb C_p}}(\log D_{\mathbb C_p}))=
\oplus_{0\leq l\leq j}H_{pet}^l(X_{\mathbb C_p},\Omega^{j-l}_{X_{\mathbb C_p}}(\log D_{\mathbb C_p}))
\end{equation*}
in $\Vect(\mathbb C_p)$. We then have the following map in $\Vect(\mathbb Z_p)$
\begin{eqnarray*}
OL_X^{\theta_j}:=\theta_j(X)^{-1}\circ(\oplus_{0\leq l\leq j}H^lOL_X^{l-j}): \\ 
\mathbb H_{pet}^j(X_{\mathbb C_p},\Omega^{\bullet}_{X_{\mathbb C_p},\log,\mathcal O}\otimes\mathbb Z_p)=
\oplus_{0\leq l\leq j}H_{pet}^l(X_{\mathbb C_p},\Omega^{j-l}_{X_{\mathbb C_p},\log,\mathcal O}\otimes\mathbb Z_p) \\
\xrightarrow{\oplus_{0\leq l\leq j}H^lOL_X^{l-j}}
\oplus_{0\leq l\leq j}H_{pet}^l(X_{\mathbb C_p},\Omega^{j-l}_{X_{\mathbb C_p}}(\log D_{\mathbb C_p}))
\xrightarrow{\theta_j(X)^{-1}}H^j_{DR}(X_{\mathbb C_p}),
\end{eqnarray*}
where the equality follows from the fact that the differential of 
$\Omega^{\bullet}_{X_{\mathbb C_p},\log,\mathcal O}$ vanishes (all the logarithmic forms are closed) so that
we get a canonical splitting.
Note that $H^jOL_X$ is NOT equal to $OL_X^{\theta_j}$. In fact we have for 
$w\in H_{pet}^l(X_{\mathbb C_p},\Omega^{j-l}_{X_{\mathbb C_p},\log,\mathcal O}\otimes\mathbb Z_p)$
\begin{equation*}
\pi_l\circ\theta_j(X)\circ H^jOL_X(w)=H^lOL_X^{j-l}(w)
\end{equation*}
where 
$\pi_l:\oplus_{0\leq l\leq j}H_{pet}^l(X_{\mathbb C_p},\Omega^{j-l}_{X_{\mathbb C_p}}(\log D_{\mathbb C_p}))
\to H_{pet}^l(X_{\mathbb C_p},\Omega^{j-l}_{X_{\mathbb C_p}}(\log D_{\mathbb C_p}))$ 
is the projection.

%%%%%%%%%%%%%%%%%%%%%%%%%%%%%%%%%%%%%%%%%%%%%%%%%%%%%%%%%%%%%%%%%%%%%%%%%%%%%%%%%%%
\section{Integral complex and $p$-adic periods of a smooth algebraic variety 
over a field $k$ of finite type over $\mathbb Q$}
%%%%%%%%%%%%%%%%%%%%%%%%%%%%%%%%%%%%%%%%%%%%%%%%%%%%%%%%%%%%%%%%%%%%%%%%%%%%%%%%%%%

%===================================================================================
\subsection{Complex integral periods}
%====================================================================================

Let $k$ a field of characteristic zero. 

Let $X\in\SmVar(k)$ a smooth variety. Let $X=\cup_{i=1}^sX_i$ an open affine cover. 
We have for $\sigma:k\hookrightarrow\mathbb C$ an embedding, the evaluation period embedding map
which is the morphism of bi-complexes 
\begin{eqnarray*}
ev(X)^{\bullet}_{\bullet}:\Gamma(X_{\bullet},\Omega^{\bullet}_{X_{\bullet}})\to 
\mathbb Z\Hom_{\Diff}(\mathbb I^{\bullet},X^{an}_{\mathbb C,\bullet})^{\vee}\otimes\mathbb C, \\
w^l_I\in\Gamma(X_I,\Omega^l_{X_I})\mapsto 
(ev(X)^l_I(w^l_I):\phi^l_I\in\mathbb Z\Hom_{\Diff}(\mathbb I^l,X^{an}_{\mathbb C,I})^{\vee}\otimes\mathbb C
\mapsto ev^l_I(w^l_I)(\phi^l_I):=\int_{\mathbb I^l}\phi_I^{l*}w^l_I)
\end{eqnarray*}
given by integration. By taking all the affine open cover $(j_i:X_i\hookrightarrow X)$ of $X$,
we get for $\sigma:k\hookrightarrow\mathbb C$, the evaluation period embedding map 
\begin{eqnarray*}
ev(X):=\varinjlim_{(j_i:X_i\hookrightarrow X)}ev(X)^{\bullet}_{\bullet}:
\varinjlim_{(j_i:X_i\hookrightarrow X)}\Gamma(X_{\bullet},\Omega^{\bullet}_{X_{\bullet}})
\to \varinjlim_{(j_i:X_i\hookrightarrow X)}
\mathbb Z\Hom_{\Diff(\mathbb R)}(\mathbb I^{\bullet},X^{an}_{\mathbb C,\bullet})^{\vee}\otimes\mathbb C
\end{eqnarray*}
It induces in cohomology, for $j\in\mathbb Z$, the evaluation period map
\begin{eqnarray*}
H^jev(X)=H^jev(X)^{\bullet}_{\bullet}:H^j_{DR}(X)=H^j\Gamma(X_{\bullet},\Omega^{\bullet}_{X_{\bullet}})\to 
H_{\sing}^j(X_{\mathbb C}^{an},\mathbb C)=
H^j(\Hom_{\Diff(\mathbb R)}(\mathbb I^{\bullet},X^{an}_{\mathbb C,\bullet})^{\vee}\otimes\mathbb C). 
\end{eqnarray*}
which does NOT depend on the choice of the affine open cover 
by acyclicity of quasi-coherent sheaves on affine noetherian schemes for the left hand side
and from Mayer-Vietoris quasi-isomorphism for singular cohomology of topological spaces
and Whitney approximation theorem for differential manifolds for the right hand side.

\begin{rem}
We also have for $\sigma:k\hookrightarrow\mathbb C$ the composition 
\begin{eqnarray*}
\bar ev(X)^{\bullet}_{\bullet}:\Gamma(X_{\bullet},\Omega^{\bullet}_{X_{\bullet}})
\xrightarrow{ev(X)^{\bullet}_{\bullet}} 
\mathbb Z\Hom_{\Diff(\mathbb R)}(\mathbb I^{\bullet},X^{an}_{\mathbb C\bullet})^{\vee}\otimes\mathbb C
\xrightarrow{\mathbb Z(X)(i)\circ\an^{-,-}} 
\Hom_{\Fun(\Delta^{\bullet},\Var(k))}(\mathbb D_{k,et}^{\bullet},X_{\bullet})^{\vee}\otimes\mathbb C 
\end{eqnarray*}
where $i:I^{\bullet}\hookrightarrow\mathbb D^{\bullet}_{k,et}$ is the embedding, which is given by integration : 
for $w^l_I\in\Gamma(X_I,\Omega^l_{X_I})$ and $\phi^l_I\in\Hom_{\Fun(\Delta^{\bullet},\Var(k))}(\mathbb D_{k,et}^j,X_I)$,
\begin{equation*}
\bar ev^l_I(w^l_I)(\phi^l_I)=\int_{\mathbb I^l}\phi_I^{l,an*}w^l_I.
\end{equation*}
\end{rem}

Let $X\in\SmVar(k)$. Note that 
\begin{equation*}
H^*ev(X_{\mathbb C}):H^*_{DR}(X_{\mathbb C})\xrightarrow{H^*R\Gamma(X_{\mathbb C}^{an},E_{zar}(\Omega(\an_X)))}
H^*_{DR}(X^{an}_{\mathbb C})\xrightarrow{H^*R\Gamma(X_{\mathbb C}^{an},\alpha(X))}
H^*_{\sing}(X_{\mathbb C}^{an},\mathbb C)
\end{equation*}
is the canonical isomorphism induced by the analytical functor and the quasi-isomorphism
$\alpha(X):\mathbb C_{X_{\mathbb C}^{an}}\hookrightarrow\Omega^{\bullet}_{X^{an}_{\mathbb C}}$ 
in $C(X_{\mathbb C}^{an})$. Hence,
\begin{equation*}
H^*ev(X)=H^*_{DR}(X)\xrightarrow{\Omega(\pi_{k/\mathbb C}(X))}H^*_{DR}(X_{\mathbb C})\xrightarrow{H^*ev(X_{\mathbb C})}
H^*_{\sing}(X_{\mathbb C}^{an},\mathbb C)
\end{equation*}
is injective. 
The elements of the image $H^*ev(X)(H^*_{DR}(X))\subset H^*_{\sing}(X_{\mathbb C}^{an},\mathbb C)$ are the periods of $X$.

Let $X\in\SmVar(k)$ a smooth variety. Let $X=\cup_{i=1}^sX_i$ an open affine cover with $X_i:=X\backslash D_i$
with $D_i\subset X$ smooth divisors with normal crossing. 
Let $\sigma:k\hookrightarrow\mathbb C$ an embedding and $X^{an}_{\mathbb C}=\cup_{i=1}^r\mathbb D_i$
an open cover with $\mathbb D_i\simeq D(0,1)^d$. 
Since a convex open subset of $\mathbb C^d$ is biholomorphic to an open ball we have 
$\mathbb D_I:=\cap_{i\in I}\mathbb D_i\simeq D(0,1)^d$ (where $d$ is the dimension of a connected component of $X$). 
Denote by $j_{\bullet}:X^{an}_{\bullet,\mathbb C}\cap\mathbb D_{\bullet}\to X^{an}_{\bullet,\mathbb C}$
is the open embeddings. We then have the period morphism of tri-complexes 
\begin{eqnarray*}
ev(X_{\mathbb C}^{an})^{\bullet}_{\bullet,\bullet}:
\Gamma(X^{an}_{\bullet,\mathbb C}\cap\mathbb D_{\bullet},\Omega^{\bullet}_{X_{\mathbb C}^{an}})\to 
\mathbb Z\Hom_{\Diff}(\mathbb I^{\bullet},X^{an}_{\mathbb C,\bullet})^{\vee}\otimes\mathbb C, \\
w^l_{I,J}\in\Gamma(X_{I,\mathbb C}^{an}\cap\mathbb D_J,\an_X^*\Omega^l_{X_I})\mapsto \\
(ev^l_{I,J}(w^l_{I,J}):
\phi^l_{I,J}\in\mathbb Z\Hom_{\Diff}(\mathbb I^l,X^{an}_{\mathbb C,I}\cap\mathbb D_J)^{\vee}\otimes\mathbb C
\mapsto ev^l_{I,J}(w^l_{I,J})(\phi^l_{I,J}):=\int_{\mathbb I^l}\phi_{I,J}^{l*}w^l_{I,J})
\end{eqnarray*}
given by integration. We have then the factorization
\begin{eqnarray*}
H^jev(X):H^j_{DR}(X):=\mathbb H^j(X,\Omega^{\bullet}_X)=\mathbb H_{et}^j(X,\Omega^{\bullet}_{X^{et}})
\xrightarrow{H^j\Omega(\pi_{k/\mathbb C}(X))} \\
H^j_{DR}(X_{\mathbb C}):=\mathbb H^j(X_{\mathbb C},\Omega^{\bullet}_X)=\mathbb H_{et}^j(X_{\mathbb C},\Omega^{\bullet}_{X^{et}})
\xrightarrow{j_{\bullet}^*\circ\an_{X_{\bullet}}^*}
H^j\Gamma(X^{an}_{\bullet,\mathbb C}\cap\mathbb D_{\bullet},\Omega^{\bullet}_{X_{\mathbb C}^{an}}) \\
\xrightarrow{H^jev(X_{\mathbb C}^{an})^{\bullet}_{\bullet,\bullet}} 
H_{\sing}^j(X_{\mathbb C}^{an}\cap\mathbb D_{\bullet},\mathbb C)=
H^j(\Hom_{\Diff(\mathbb R)}(\mathbb I^{\bullet},X^{an}_{\mathbb C,\bullet}\cap\mathbb D_{\bullet})^{\vee}\otimes\mathbb C). 
\end{eqnarray*}
where for the left hand side, the first equality follows from the fact that 
$\Omega^{\bullet}\in C(\SmVar(k))$ is $\mathbb A^1$ local and admits transferts,
and the equality of the right hand side follows 
from Mayer-Vietoris quasi-isomorphism for singular cohomology of topological spaces.

\begin{rem}
Let $X\in\SmVar(k)$ a smooth variety. Let $X=\cup_{i=1}^sX_i$ an open affine cover with $X_i:=X\backslash D_i$
with $D_i\subset X$ smooth divisors with normal crossing. 
Let $\sigma:k\hookrightarrow\mathbb C$ an embedding and $X^{an}_{\mathbb C}=\cup_{i=1}^r\mathbb D_i$
an open cover with $\mathbb D_i\simeq D(0,1)^d$. 
Since a convex open subset of $\mathbb C^d$ is biholomorphic to an open ball we have 
$\mathbb D_I:=\cap_{i\in I}\mathbb D_i\simeq D(0,1)^d$ (where $d$ is the dimension of a connected component of $X$). 
Denote by $j_{\bullet}:X^{an}_{\bullet,\mathbb C}\cap\mathbb D_{\bullet}\to X^{an}_{\bullet,\mathbb C}$
the open embeddings. Then,
\begin{eqnarray*}
j_{\bullet}^*\circ\an_{X_{\bullet}}^*:=\Omega(j_{\bullet}\circ\an_{X_{\bullet}}):
\Gamma(X_{\bullet,\mathbb C},\Omega^{\bullet}_{X^{et}})\to
\Gamma(X^{an}_{\bullet,\mathbb C}\cap\mathbb D_{\bullet},\Omega^{\bullet}_{X_{\mathbb C}^{an}}) 
\end{eqnarray*}
is a quasi-isomorphism by the Grothendieck comparaison theorem for De Rham cohomology
and the acyclicity of quasi-coherent sheaves on noetherian affine schemes.
\end{rem}

\begin{lem}\label{keypropClem}
Let $k\subset\mathbb C$ a subfield.
Let $X\in\SmVar(k)$ a smooth variety. Let $(r_i:X_i\to X)_{1\leq i\leq s}$ an affine etale cover. 
Let $X^{an}_{\mathbb C}=\cup_{i=1}^r\mathbb D_i$ an open cover with $\mathbb D_i\simeq D(0,1)^d$. Let 
\begin{equation*}
w=[\sum_{I,J,l,card I+card J+l=j}w^l_{I,J}]=\sum_{I,J,l,card I+card J+l=j}[w^l_{I,J}]
\in H^j\Gamma(X^{an}_{\bullet,\mathbb C}\cap\mathbb D_{\bullet},\Omega^{\bullet}_{X_{\mathbb C}^{an}}). 
\end{equation*}
Then the following assertions are equivalent :
\begin{itemize}
\item[(i)] $H^jev(X)(w)\in H^j_{\sing}(X_{\mathbb C}^{an},2i\pi\mathbb Q)$,
\item[(ii)] for all $I,J,l$ such that $card I+card J+l=j$ , there exist a lift 
\begin{equation*}
[\tilde w^l_{IJ}]\in H^l\Gamma(X^{an}_{I,\mathbb C}\cap\mathbb D_J,\Omega^{\bullet}_{X_{\mathbb C}^{an}})
\end{equation*}
of $[w^l_{IJ}]$ with respect to the spectral sequence associated to the filtration on the total complex 
associated to the bi-complex structure such that
\begin{equation*}
H^lev(X_{\mathbb C}^{an})^{\bullet}_{I,J}([\tilde w^l_{I,J}])
\in H^l_{\sing}(X_{I,\mathbb C}^{an}\cap\mathbb D_J,2i\pi\mathbb Q).
\end{equation*}
\end{itemize} 
\end{lem}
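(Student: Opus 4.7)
The plan is to exploit that the evaluation map $ev(X^{an}_{\mathbb C})^\bullet_{\bullet,\bullet}$ is a morphism of triple complexes in the three indices $|I|$, $|J|$ and $l$, and hence compatible with the filtrations coming from the Cech-type indices. First I would filter both triple complexes by $p := |I| + |J|$, obtaining spectral sequences converging to $H^*_{DR}(X_{\mathbb C})$ on the de Rham side and to $H^*_{\sing}(X^{an}_{\mathbb C}, \mathbb C)$ on the Betti side. The Betti triple complex contains a canonical rational subcomplex $\mathbb Z\Hom_{\Diff(\mathbb R)}(\mathbb I^\bullet, -)^\vee \otimes 2i\pi\mathbb Q$, which is stable under all differentials and under the filtration, yielding a rational sub-spectral sequence whose abutment is $H^*_{\sing}(X^{an}_{\mathbb C}, 2i\pi\mathbb Q)$.

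For the direction (ii) $\Rightarrow$ (i), given rational lifts $[\tilde w^l_{I,J}]$ on each strand, the total class $H^jev(X)(w)$ is obtained by assembling these pieces through the Cech-type boundary maps, which are $\mathbb Z$-linear; so the assembled class lies in $H^j_{\sing}(X^{an}_{\mathbb C}, 2i\pi\mathbb Q)$.

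For (i) $\Rightarrow$ (ii), I would first verify that $ev$ induces an isomorphism on each strand (via Grothendieck's comparison theorem for algebraic vs.\ analytic de Rham cohomology together with the holomorphic Poincar\'e lemma on the polydisks $\mathbb D_J$, which allows the refined Mayer-Vietoris cover by $\mathbb D_\bullet$ to still compute $H^*_{\sing}$) and hence on every page of the associated spectral sequence. If the abutment class $H^jev(X)(w)$ is rational, then every associated graded piece $\text{gr}^p H^j$ on the Betti side is rational; transporting across the spectral sequence isomorphism induced by $ev$ then yields rational lifts $[\tilde w^l_{I,J}]$ on the de Rham side satisfying the stated condition.

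The main obstacle will be checking carefully that the rational structure descends cleanly through the filtration, i.e.\ that rationality at the abutment really does impose rationality on each $E_\infty$ piece in the form needed to produce the lifts. This is essentially formal once one notes that the Cech differentials carry $\mathbb Z$-coefficients, so that the Cech-type filtration on singular cochains is stable under the rational substructure; the lifts can then be constructed inductively by downward induction on $p = |I| + |J|$, peeling off one level of the filtration at each step and using the isomorphism of $E_\infty$-pages to transfer the rational class from the Betti side to a rational de Rham lift $[\tilde w^l_{I,J}]$.
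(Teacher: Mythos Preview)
Your proposal is correct and takes essentially the same approach as the paper. The paper's proof is a single sentence: the result follows immediately from the fact that $ev(X_{\mathbb C}^{an})^{\bullet}_{\bullet,\bullet}$ defines a morphism of spectral sequences for the filtration given by the bi-complex structure; your discussion of the rational sub-spectral sequence on the Betti side (via the $\mathbb Z$-linearity of the \v{C}ech differentials) and the transport of rational $E_\infty$-pieces back to de Rham lifts is precisely the content hidden behind that one line.
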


\begin{proof}
Follows immediately form the fact that 
$ev(X_{\mathbb C}^{an})^{\bullet}_{\bullet,\bullet}$ define by definition a morphism of spectral sequence for the filtration
given by the bi-complex structure.
\end{proof}

\begin{lem}\label{keypropClem2}
Let $k\subset\mathbb C$ a subfield. 
Let $\pi:X'\to X$ a proper generically finite (of degree $d$) morphism with $X,X'\in\SmVar(k)$.
Let $w\in H^j_{DR}(X)$. Then $ev(X)(w)\in H^j_{\sing}(X_{\mathbb C}^{an},2i\pi\mathbb Q)$ if and only if
$ev(X')(\pi^*w)\in H^j_{\sing}(X_{\mathbb C}^{'an},2i\pi\mathbb Q)$.
\end{lem}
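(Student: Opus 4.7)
My plan is to derive the equivalence from the naturality of the evaluation period map with respect to pullback and Gysin pushforward along $\pi$. Since $\pi$ is proper generically finite of degree $d$ between smooth $k$-varieties of equal dimension, both de Rham cohomology and complex singular cohomology of $X$ and $X'$ carry pushforward maps (of degree zero), and on each side $\pi_*\circ\pi^*$ equals multiplication by $d$.

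The forward direction is immediate from functoriality. For a smooth chain $\phi:\mathbb I^l\to X_{\mathbb C}^{'an}$ one computes
\begin{equation*}
ev(X')(\pi^*w)(\phi)=\int_{\mathbb I^l}\phi^{*}\pi^{*}w=\int_{\mathbb I^l}(\pi\circ\phi)^{*}w=ev(X)(w)(\pi\circ\phi)=(\pi^*ev(X)(w))(\phi),
\end{equation*}
so $ev(X')(\pi^*w)=\pi^*ev(X)(w)$. Since singular pullback sends $2i\pi\mathbb Q$-valued cochains to $2i\pi\mathbb Q$-valued cochains, the assumption $ev(X)(w)\in H^j_{\sing}(X_{\mathbb C}^{an},2i\pi\mathbb Q)$ immediately gives $ev(X')(\pi^*w)\in H^j_{\sing}(X_{\mathbb C}^{'an},2i\pi\mathbb Q)$.

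For the converse I would apply the Gysin trace. The pushforward
\begin{equation*}
\pi_*^{\sing}:H^j_{\sing}(X_{\mathbb C}^{'an},\mathbb Q)\to H^j_{\sing}(X_{\mathbb C}^{an},\mathbb Q),
\end{equation*}
obtained via Poincar\'e duality from the proper push-forward on Borel--Moore homology of the map $\pi^{an}_{\mathbb C}$ between equidimensional smooth complex manifolds, is $\mathbb Q$-linear and hence preserves the $2i\pi\mathbb Q$-subspaces after extension to $\mathbb C$. Combined with the compatibility
\begin{equation*}
\pi_*^{\sing}\circ H^jev(X')=H^jev(X)\circ\pi_*^{DR}\colon H^j_{DR}(X')\to H^j_{\sing}(X_{\mathbb C}^{an},\mathbb C)
\end{equation*}
and the relation $\pi_*\pi^*=d\cdot\mathrm{id}$, this yields
\begin{equation*}
d\cdot ev(X)(w)=ev(X)(\pi_*\pi^*w)=\pi_*^{\sing}ev(X')(\pi^*w)\in H^j_{\sing}(X_{\mathbb C}^{an},2i\pi\mathbb Q),
\end{equation*}
and dividing by $d\in\mathbb Q^*$ gives $ev(X)(w)\in H^j_{\sing}(X_{\mathbb C}^{an},2i\pi\mathbb Q)$.

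The hard part will be making the compatibility of the trace maps with the period isomorphism precise when $X,X'$ are not projective. My approach is to fit $\pi$ into a proper morphism $\bar\pi:\bar X'\to\bar X$ between smooth compactifications with normal crossing boundaries $D=\bar X\backslash X$ and $D'=\bar X'\backslash X'$ satisfying $\bar\pi^{-1}(D)\subset D'$, construct the Gysin pushforward at the level of the logarithmic de Rham complex $\Omega^{\bullet}_{\bar X}(\log D)$ and its Betti counterpart, and reduce to the standard compatibility of the trace with the period isomorphism for the proper morphism $\bar\pi$ between smooth projective varieties; the required identity on $X$ and $X'$ then follows by passage to the open-variety cohomology.
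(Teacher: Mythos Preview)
Your proposal is correct and follows exactly the paper's approach: functoriality of $ev$ under pullback for the forward direction, and the trace formula $\pi_*\pi^*=d\cdot I$ for the converse. The paper's proof is a single sentence asserting precisely these two facts, so your version is simply a more detailed elaboration of the same argument; in particular, the care you take in the last paragraph about compatibility of the Gysin pushforward with the period map for open varieties goes beyond what the paper spells out.
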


\begin{proof}
Since $ev(X')(\pi^*w)=\pi^*ev(X)(w)\in H^j_{\sing}(X_{\mathbb C}^{'an},\mathbb C)$, 
the only if part is obvious whereas the if part follows from the formula of $\pi_*\pi^*=dI$.
\end{proof}

The main proposition of this section is the following :

\begin{prop}\label{keypropC}
Let $k$ a field of characteristic zero. Let $X\in\SmVar(k)$. 
Let $\sigma:k\hookrightarrow\mathbb C$ an embedding.
\begin{itemize}
\item[(i)]Let $w\in H^j_{DR}(X):=\mathbb H^j(X,\Omega^{\bullet}_X)=\mathbb H_{pet}^j(X,\Omega^{\bullet}_X)$. If 
\begin{equation*}
w\in H^jOL_X(\mathbb H_{pet}^j(X_{\mathbb C},\Omega^{\bullet}_{X_{\mathbb C}^{et},\log}))
\end{equation*}
then $H^jev(X)(w)\in H^j_{\sing}(X_{\mathbb C}^{an},2i\pi\mathbb Q)$. 
\item[(ii)]Let $p\in\mathbb N$ a prime number and $\sigma_p:k\hookrightarrow\mathbb C_p$ an embedding.
Let $j\in\mathbb Z$.
Let $w\in H^j_{DR}(X):=\mathbb H^j(X,\Omega^{\bullet}_X)=\mathbb H_{pet}^j(X,\Omega^{\bullet}_X)$. If 
\begin{equation*}
w:=\pi_{k/\mathbb C_p}(X)^*w\in H^jOL_X
(\mathbb H_{et}^j(X_{\mathbb C_p},\Omega^{\bullet}_{X_{\mathbb C_p}^{et},\log,\mathcal O}))
\end{equation*}
then $H^jev(X)(w)\in H^j_{\sing}(X_{\mathbb C}^{an},2i\pi\mathbb Q)$. Recall that
$\mathbb H_{pet}^j(X_{\mathbb C},\Omega^{\bullet}_{X_{\mathbb C_p}^{pet},\log,\mathcal O})=
\mathbb H_{et}^j(X_{\mathbb C},\Omega^{\bullet}_{X_{\mathbb C_p}^{et},\log,\mathcal O})$.
\end{itemize}
\end{prop}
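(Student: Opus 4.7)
My plan is to reduce the statement to a local Čech computation on an affine cover refined by analytic polydiscs, and then to apply the exponential short exact sequence to identify the periods of logarithmic forms explicitly.

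For part (i), I would fix an affine Zariski cover $X = \bigcup_i X_i$ with $X_i = X \setminus D_i$ for $D_i$ smooth normal-crossing divisors, so that each intersection $X_I$ is affine, together with an analytic polydisc cover $X^{an}_{\mathbb{C}} = \bigcup_j \mathbb{D}_j$. By the remark preceding lemma \ref{keypropClem}, the period map $H^jev(X)$ factors through the Čech--de Rham bicomplex $\Gamma(X^{an}_{\bullet, \mathbb{C}} \cap \mathbb{D}_\bullet, \Omega^\bullet_{X^{an}_{\mathbb{C}}})$, so by lemma \ref{keypropClem} it suffices to check the rationality of each local component $w^l_{I,J}$ of a representing cocycle on $X^{an}_{I,\mathbb{C}} \cap \mathbb{D}_J$. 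After passing to a generically finite étale refinement on which the pro-étale log class trivializes and descending via lemma \ref{keypropClem2}, I can arrange $w^l_{I,J} = \sum_k n_k \, d\log f_{k,1} \wedge \cdots \wedge d\log f_{k,l}$ with units $f_{k,s} \in \mathcal{O}(X^{an}_{I,\mathbb{C}} \cap \mathbb{D}_J)^*$ and integers $n_k$. The exponential sequence $0 \to 2i\pi \mathbb{Z} \to \mathcal{O}^{an} \xrightarrow{\exp} \mathcal{O}^{an,*} \to 0$ supplies each unit a Chern class $c_1(f_{k,s}) \in H^1(-, 2i\pi \mathbb{Z})$ whose image in $H^1_{DR}(-)$ is the class of $d\log f_{k,s}$; by naturality of cup product, $[w^l_{I,J}]$ is the image of an element of $H^l(-, (2i\pi)^l \mathbb{Z})$, hence lies in the $2i\pi \mathbb{Q}$-rational Betti lattice (with the appropriate Tate twist), as required.

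For part (ii), I would run the same Čech reduction on the pro-étale site of $X_{\mathbb{C}_p}$, now with integral $\mathbb{Z}_p$-coefficients. The new input is the $p$-adic Hodge comparison (as developed in the paper's propositions \ref{keypropCp}(i) and \ref{LatticeLog}): an integral logarithmic representative on $X_{\mathbb{C}_p}$ corresponds, via Fontaine-style period rings, to a Galois-invariant class in $H^j_{et}(X_{\bar k}, \mathbb{Z}_p)$, and the Artin comparison $H^j_{et}(X_{\bar k}, \mathbb{Z}_p) \simeq H^j_{\sing}(X^{an}_{\mathbb{C}}, \mathbb{Z}_p)$ transports this to a Betti $\mathbb{Z}_p$-class. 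Since the $k$-rational class $w$ has complex de Rham period lying simultaneously in the image of a $\mathbb{Z}_p$-integral étale class, a descent argument using the injectivity of $H^j_{\sing}(X^{an}_{\mathbb{C}}, \mathbb{Q}) \otimes_{\mathbb{Q}} \mathbb{Q}_p \hookrightarrow H^j_{\sing}(X^{an}_{\mathbb{C}}, \mathbb{Q}_p)$ combined with the conclusion of (i) forces the period to lie in $H^j_{\sing}(X^{an}_{\mathbb{C}}, 2i\pi \mathbb{Q})$.

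The main obstacle will be the careful bookkeeping across the three sites involved---complex analytic, étale over $\mathbb{C}$, and pro-étale on an integral model over $O_{\hat{k}_{\sigma_p}}$---and the preservation of integrality and Tate twists under the various comparison maps; reconciling the integral pro-étale representative of (ii) with the Zariski--Čech local expression $\sum n_k \, d\log f_{k,1} \wedge \cdots$ needed to run the exponential-sequence computation is essentially the content that propositions \ref{keypropCp}(i) and \ref{LatticeLog} are designed to handle, and will be the most delicate step.
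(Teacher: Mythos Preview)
Your approach to part (i) is essentially the paper's: reduce via lemmas \ref{keypropClem} and \ref{keypropClem2} to a local computation on $X^{an}_{I,\mathbb{C}}\cap\mathbb D_J$. The only cosmetic difference is that the paper computes the period of $d\log f_{\nu_1}\wedge\cdots\wedge d\log f_{\nu_l}$ directly by integrating against explicit loop generators of $H_*(X^{an}_{I,\mathbb{C}}\cap\mathbb D_J,\mathbb{Z})$ (getting $2i\pi\mathbb{Z}$), whereas you package the same fact via the exponential sequence and cup product. Either is fine.

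For part (ii), however, you have taken a wrong turn. The paper's proof of (ii) is one line: it is a particular case of (i). The point is that the integral logarithmic sheaf $\Omega^{\bullet}_{X_{\mathbb C_p}^{et},\log,\mathcal O}$ is by construction a subsheaf of $\Omega^{\bullet}_{X_{\mathbb C_p}^{et},\log}$, and the proof of (i) already allows the log representatives to live over any intermediate field $k'\supset k$ (one then chooses $\sigma':k'\hookrightarrow\mathbb C$ extending $\sigma$). So a $\mathbb C_p$-log class is, after restricting coefficients to a finitely generated $k'\subset\mathbb C_p$, a log class to which (i) applies verbatim.

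Your proposed route through propositions \ref{keypropCp}(i) and \ref{LatticeLog}, Artin comparison, and a ``descent argument'' is both unnecessary and problematic. First, \ref{LatticeLog} requires $p\notin\delta(k,X)$, a hypothesis absent from the statement of (ii), so your argument would not prove the full claim. Second, the descent step is not sound as written: landing in $H^j_{\sing}(X^{an}_{\mathbb C},\mathbb Z_p)$ via Artin comparison does not by itself force the complex period $ev(X)(w)\in H^j_{\sing}(X^{an}_{\mathbb C},\mathbb C)$ to lie in $H^j_{\sing}(X^{an}_{\mathbb C},2i\pi\mathbb Q)$, since there is no natural map $H^j_{\sing}(-,\mathbb C)\to H^j_{\sing}(-,\mathbb Q_p)$ through which to compare. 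In the paper's logical architecture, \ref{keypropC} is the elementary input that is later \emph{combined} with \ref{keypropCp} and \ref{LatticeLog} to prove theorem \ref{CevGd}; you have inverted this flow.
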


\begin{proof}
\noindent(i):
Let 
\begin{equation*}
w\in H^j_{DR}(X):=\mathbb H^j(X,\Omega^{\bullet}_X)=H^j\Gamma(X_{\bullet},\Omega^{\bullet}_X).
\end{equation*}
where $(r_i:X_i\to X)_{1\leq i\leq s}$ is an affine etale cover.
Let $X^{an}_{\mathbb C}=\cup_{i=1}^r\mathbb D_i$ an open cover with $\mathbb D_i\simeq D(0,1)^d$. 
Denote $j_{IJ}:X_I\cap\mathbb D_J\hookrightarrow X_I$ the open embeddings.
Then by definition $H^jev(X)(w)=H^jev(X_{\mathbb C}^{an})(j_{\bullet}^*\circ\an_{X_{\bullet}}^*w)$ with
\begin{equation*}
j_{\bullet}^*\circ\an_{X_{\bullet}}^*w
\in H^j\Gamma(X^{an}_{\bullet,\mathbb C}\cap\mathbb D_{\bullet},\Omega^{\bullet}_{X_{\mathbb C}^{an}}). 
\end{equation*}
Now, if 
$w=H^jOL_X(\mathbb H_{et}^j(X_{\mathbb C},\Omega^{\bullet}_{X_{\mathbb C}^{et},\log}))$,
we have a canonical splitting
\begin{eqnarray*}
w=\sum_{l=0}^jw_L^{l,j-l}=\sum_{l=d}^jw^{l,j-l}\in H^j_{DR}(X_{\mathbb C}), \; 
w_L^{l,j-l}\in H^{j-l}(X_{\mathbb C},\Omega^l_{X^{et}_{\mathbb C},\log}), \;
w^{l,j-l}:=H^jOL_{X_{\mathbb C}^{et}}(w_L^{l,j-l}).
\end{eqnarray*}
Let $0\leq l\leq j$. Using an affine w-contractile pro-etale cover of $X$,
we see that there exists an affine etale cover 
$r=r(w^{l,j-l})=(r_i:X_i\to X)_{1\leq i\leq n}$ of $X$ (depending on $w^{l,j-l}$) such that
\begin{eqnarray*}
w^{l,j-l}=[(w^{l,j-l}_{L,I})_I]\in  
H^jOL_{X_{\mathbb C}^{et}}(H^{j-l}\Gamma(X_{\mathbb C,\bullet},\Omega^l_{X_{\mathbb C},\log}))
\subset\mathbb H^j\Gamma(X_{\mathbb C,\bullet},\Omega^{\bullet}_{X_{\mathbb C}}).
\end{eqnarray*}
Note that since $X$ is an algebraic variety, this also follows from a comparison theorem between
Chech cohomology of etale covers and etale cohomology.
By lemma \ref{keypropClem2}, we may assume, 
up to take a desingularization $\pi:X'\to X$ of $(X,\cup_i(r_i(X\backslash X_i)))$ and replace $w$ with $\pi^*w$,
that $r_i(X_i)=r_i(X_i(w))=X\backslash D_i$ with $D_i\subset X$ smooth divisors with normal crossing
For $1\leq l\leq j$, we get
\begin{eqnarray*}
w^{l,j-l}_{L,I}=\sum_{\nu}df_{\nu_1}/f_{\nu_1}\wedge\cdots\wedge df_{\nu_l}/f_{\nu_l}
\in\Gamma(X_{\mathbb C,I},\Omega^l_{X_{\mathbb C}}).
\end{eqnarray*}
For $l=0$, we get 
\begin{equation*}
w^{0,j}=[(\lambda_I)]\in H^j\Gamma(X_{\mathbb C,I},O_{X_{\mathbb C_p,I}}), \;
\lambda_I\in\Gamma(X_{\mathbb C,I},\mathbb Z_{X_{\mathbb C,I}^{et}})
\end{equation*}
There exists $k'\subset\mathbb C$ containing $k$ such that $w^{l,j-l}_{L,I}\in\Gamma(X_{k',I},\Omega^l_{X_{k'}})$
for all $0\leq l\leq j$.
Taking an embedding $\sigma':k'\hookrightarrow\mathbb C$ such that $\sigma'_{|k}=\sigma$, we then have
\begin{eqnarray*}
j_{\bullet}^*\circ\an_{X_{\bullet}}^*w=j_{\bullet}^*((m_l\cdot w^{l,j-l}_L)_{0\leq l\leq j})=(w^{l,j-l}_{L,I,J})_{l,I,J}
\in H^j\Gamma(X^{an}_{\bullet,\mathbb C}\cap\mathbb D_{\bullet},\Omega_{X_{\mathbb C}^{an}}^{\bullet}). 
\end{eqnarray*}
where for each $(I,J,l)$ with $card I+card J+l=j$, 
\begin{equation*}
w^{l,j-l}_{L,I,J}:=j_{IJ}^*w_{L,I}^{l,j-l}\in 
\Gamma(X^{an}_{I,\mathbb C}\cap\mathbb D_J,\Omega_{X_{\mathbb C}^{an}}^l).
\end{equation*} 
We have by a standard computation, for each $(I,J,l)$ with $card I+card J+l=j$,
\begin{equation*}
H^*_{\sing}(X_{I,\mathbb C}^{an}\cap\mathbb D_J,\mathbb Z)=<\gamma_1,\cdots,\gamma_{card I}>,
\end{equation*}
where for $1\leq i\leq card I$, $\gamma_i\in\Hom(\Delta^*,X_{I,\mathbb C}^{an}\cap\mathbb D_J)$
are products of loops around the origin inside the pointed disc $\mathbb D^1\backslash 0$.
On the other hand,
\begin{itemize}
\item $w^{l,j-l}_{L,I,J}=j_J^*(\sum_{\nu}df_{\nu_1}/f_{\nu_1}\wedge\cdots\wedge df_{\nu_l}/f_{\nu_l})
\in\Gamma(X_{I,\mathbb C}^{an}\cap\mathbb D_J,\Omega^l_{X_{\mathbb C}^{an}})$
for $1\leq l\leq j$, 
\item $w^{0,j}_{L,I,J}=\lambda_I$ is a constant.
\end{itemize}
Hence, for $\mu\in P([1,\cdots,s])$ with $card\mu=l$, we get, for $l=0$ 
$H^lev(X_{\mathbb C}^{an})_{I,J}(w_{L,I,J}^{0,j})=0$ and, for $1\leq l\leq j$,
\begin{eqnarray*}
H^lev(X_{\mathbb C}^{an})_{I,J}(w_{L,I,J}^{l,j-l})(\gamma_{\mu})=\sum_k\delta_{\nu,\mu}2i\pi\in 2i\pi\mathbb Z.
\end{eqnarray*}
where $\gamma_{\mu}:=\gamma_{\mu_1}\cdots\gamma_{\mu_l}$.
We conclude by lemma \ref{keypropClem}.

\noindent(ii):It is a particular case of (i).
\end{proof}

Let $k$ a field of characteristic zero.
Let $X\in\SmVar(k)$. Let $X=\cup_{i=1}^sX_i$ an open affine cover with $X_i:=X\backslash D_i$ 
with $D_i\subset X$ smooth divisors with normal crossing. Let $\sigma:k\hookrightarrow\mathbb C$ an embedding.
By proposition \ref{keypropC}, we have a commutative diagram of graded algebras
\begin{equation*}
\xymatrix{H^*_{DR}(X)\ar[rrr]^{H^*ev(X)} & \, & \, & H^*_{\sing}(X_{\mathbb C}^{an},\mathbb C) \\
H^*OL_X(\mathbb H_{et}^*(X,\Omega^{\bullet}_{X^{et},\log}))
\cap H^*_{DR}(X)\ar[u]^{\subset}\ar[rrr]^{H^*ev(X)} & \, & \, &
H^*_{\sing}(X_{\mathbb C}^{an},2i\pi\mathbb Q)\ar[u]_{H^*C^*\iota_{2i\pi\mathbb Q/\mathbb C}(X_{\mathbb C}^{an})}}
\end{equation*}
where 
\begin{equation*}
C^*\iota_{2i\pi\mathbb Q/\mathbb C}(X_{\mathbb C}^{an}):
C^{\bullet}_{\sing}(X_{\mathbb C}^{an},2i\pi\mathbb Q)\hookrightarrow C^{\bullet}_{\sing}(X_{\mathbb C}^{an},\mathbb C)
\end{equation*}
is the subcomplex consiting of $\alpha\in C^j_{\sing}(X_{\mathbb C}^{an},\mathbb C)$ 
such that $\alpha(\gamma)\in 2i\pi\mathbb Q$ for all $\gamma\in C_j^{\sing}(X_{\mathbb C}^{an},\mathbb Q)$.
Recall that 
\begin{equation*}
H^*ev(X_{\mathbb C})=H^*R\Gamma(X_{\mathbb C}^{an},\alpha(X))\circ\Gamma(X_{\mathbb C}^{an},E_{zar}(\Omega(\an_X))):
H^*_{DR}(X_{\mathbb C})\xrightarrow{\sim}H^*_{\sing}(X_{\mathbb C}^{an},\mathbb C)
\end{equation*}
is the canonical isomorphism induced by the analytical functor and 
$\alpha(X):\mathbb C_{X_{\mathbb C}^{an}}\hookrightarrow\Omega^{\bullet}_{X^{an}_{\mathbb C}}$, 
which gives the periods elements $H^*ev(X)(H^*_{DR}(X))\subset H^*_{\sing}(X_{\mathbb C}^{an},\mathbb C)$.
On the other side the induced map
\begin{equation*}
H^*ev(X_{\mathbb C}):
H^*OL_X(\mathbb H_{et}^*(X_{\mathbb C},\Omega^{\bullet}_{X_{\mathbb C}^{et},\log}))
\hookrightarrow H^*\iota_{2i\pi\mathbb Q/\mathbb C}H^*_{\sing}(X_{\mathbb C}^{an},2i\pi\mathbb Q)
\end{equation*}
is NOT surjective in general since the left hand side is invariant by the action of the group
$Aut(\mathbb C)$ (the group of field automorphism of $\mathbb C$) 
whereas the right hand side is not.
The fact for a de Rham cohomology class of being logarithmic is algebraic 
and invariant under isomorphism of (abstract) schemes.

%=====================================================================================
\subsection{Rigid GAGA for logarithmic de Rham classes}
%=====================================================================================

Let $K$ a field of characteristic zero which is complete for a $p$-adic norm.
We consider $\Sch^{int}/O_K:=O(\PSch^2/O_K)\subset\Sch^{ft}/O_K$ the full subcategory
consisting of integrable models of algebraic varieties over $K$,
where $O:\PSch^2/O_K\to\Sch^{ft}/O_K, \; O(X,Z)=X\backslash Z$ is the canonical functor. 
Denote by $\Var(K)^{pet}$ and $(\Sch^{int}/O_K)^{pet}$ the big pro-etale sites.
We then have the morphsim of sites $r:\Var(K)^{pet}\to(\Sch^{int}/O_K)^{pet}$
We will consider 
\begin{equation*}
\Omega^{\bullet,an}_{/K,\log,\mathcal O}\in C((\Sch^{int}/O_K)^{pet}), \;
X^{\mathcal O}\mapsto\Omega^{\bullet}_{\hat X^{(p)},\log,\mathcal O}(\hat X^{\mathcal O,(p)}), \, 
(f:X'\to X)\mapsto\Omega(f):=f^*
\end{equation*}
and the embedding of $C(\Var(K)^{pet})$ (see definition \ref{wlogdef}(iii))
\begin{eqnarray*}
OL:=OL_{/K,an}:r^*\Omega^{\bullet,an}_{/K,\log,\mathcal O}\hookrightarrow\Omega_{/K}^{\bullet,an}, \;
\mbox{for} \, X\in\Var(K)^{pet}, \\ 
OL_{/K,an}(X):=OL_{\hat X^{(p)}}(X):
\varinjlim_{X^{\mathcal O}\in(\Sch^{int}/O_K)^{pet},X^{\mathcal O}\times_{O_K}K=X}
\Omega^{\bullet}_{\hat X^{(p)},\log,\mathcal O}(\hat X^{(p)})\to\Omega^{\bullet}_{\hat X^{(p)}}(\hat X^{(p)}).
\end{eqnarray*}
and their restrictions to $\SmVar(K)^{pet}\subset\Var(K)^{pet}$.
Denote $\FSch/O_K$ is the category of formal schemes over $O_K$.
The morphism of site in $\RCat$ given by completion with respect to $(p)$
\begin{equation*}
c:\FSch/O_K\to\Var(K),\; X\mapsto\hat X:=\hat X^{\mathcal O,(p)}\otimes_{O_K}K, \; 
X^{\mathcal O}\in\Sch^{int}/O_K, \, \mbox{s.t} \, X^{\mathcal O}\otimes_{O_K}K=X
\end{equation*}
induces the map in $C(\Var(K)^{pet})$
\begin{equation*}
c^*:\Omega^{\bullet}_{/K}\to\Omega_{/K}^{\bullet,an}:=c_*\Omega_{/K}^{\bullet,an}, \;
\mbox{for} \, X\in\Var(K), \, c^*:\Omega^{\bullet}_{X}(X)\to\Omega^{\bullet}_{\hat X^{(p)}}(\hat X^{(p)}),
\end{equation*}
which induces for each $j\in\mathbb Z$ and $X\in\Var(K)$ the morphism 
\begin{equation*}
c^*:H^j_{DR}(X)=\mathbb H_{et}^j(X,\Omega^{\bullet}_X)\to
\mathbb H_{et}^j(X,\Omega^{\bullet}_{\hat X^{(p)}})=\mathbb H_{et}^j(\hat X^{(p)},\Omega^{\bullet}_{\hat X^{(p)}})
\end{equation*}
which is an isomorphism for $X\in\SmVar(K)$ or $X\in\PVar(K)$ by GAGA (c.f. EGA 3) and the fact that 
for the etale toplogy $c_*=Rc_*$.
On the on the other hand, it is well-known (see \cite{Sch}) that for $X\in\SmVar(K)$, by taking an open cover $X=\cup_iX_i$
such that there exists etale maps $X_i\to T^{d_X}\subset\mathbb A^{d_X}$, we have 
\begin{equation*}
\mathbb H_{pet}^j(\hat X^{(p)},\Omega^{\bullet}_{\hat X^{(p)}})=
\mathbb H_{et}^j(\hat X^{(p)},\Omega^{\bullet}_{\hat X^{(p)}}), \;
\mbox{that is}, \;
\mathbb H_{pet}^j(X,\Omega^{\bullet}_{\hat X^{(p)}})=
\mathbb H_{et}^j(X,\Omega^{\bullet}_{\hat X^{(p)}}).
\end{equation*}
We have also the sheaves
\begin{eqnarray*}
\hat O,\hat O^*\in\PSh(\Var(K)), \; X\in\Var(K)\mapsto 
\hat O(X):=\varprojlim_{n\in\mathbb N} O(X^{\mathcal O}_{/p^n}),\, 
\hat O^*(X):=\varprojlim_{n\in\mathbb N}O(X^{\mathcal O}_{/p^n})^*, \\ 
(g:Y\to X)\mapsto (a_g(X^{\mathcal O}_{/p^n}))_{n\in\mathbb N}:\hat O(X)\to \hat O(Y), \, \hat O(X)^*\to \hat O(Y)^*
\end{eqnarray*}
The sheaf $\hat O^*\in\PSh(\SmVar(k))$ admits transfers : for $W\subset X'\times X$ with $X,X'\in\SmVar(K)$
and $W$ finite over $X'$ and $f\in O(X^{\mathcal O}_{/p^n})^*$, $W^*f:=N_{W/X'}(p_X^*f)$ 
where $p_X:W\hookrightarrow X'\times X\to X$ is the projection and $N_{W/X'}:k(W)^*\to k(X')^*$ is the norm map.
This gives transfers on $\Omega^{1,an}_{/K,\log,\mathcal O}\in\PSh(\SmVar(K))$ compatible with transfers on 
$\Omega^{1,an}_{/K}\in\PSh(\SmVar(K))$ :
for $W\subset X'\times X$ with $X,X'\in\SmVar(k)$ and $W$ finite over $X'$ and $f\in O(X^{\mathcal O}_{/p^n})^*$, 
\begin{equation*}
W^*df/f:=dW^*f/W^*f=Tr_{W/X'}(p_X^*(df/f)), 
\end{equation*}
where where $p_X:W\hookrightarrow X'\times X\to X$ is the projection and $Tr_{W/X'}:O_W\to O_X$ is the trace map. 
Note that $d(fg)/fg=df/f+dg/g$. We get transfers on 
\begin{equation*}
\otimes^l_{\mathbb Q_p}\Omega^{1,an}_{/K,\log,\mathcal O}, \, \otimes_{\hat O}^l\Omega^{1,an}_{/K}\in\PSh(\SmVar(K))
\end{equation*}
since 
$\otimes^l_{\mathbb Q_p}\Omega^{1,an}_{/K,\log,\mathcal O}=H^0(\otimes^{L,l}_{\mathbb Q_p}\Omega^{1,an}_{/K,\log,\mathcal O})$ 
and $\otimes_{\hat O}^l\Omega^{1,an}_{/k}=H^0(\otimes_{\hat O}^{L,l}\Omega^{1,an}_{/K})$.
This induces transfers on  
\begin{eqnarray*}
\wedge^l_{\mathbb Q_p}\Omega^{1,an}_{/K,\log,\mathcal O}:=
\coker(\oplus_{I_2\subset[1,\ldots,l]}\otimes^{l-1}_{\mathbb Q_p}\Omega^{1,an}_{/K,\log,\mathcal O}
\xrightarrow{\oplus_{I_2\subset[1,\ldots,l]}\Delta_{I_2}:=(w\otimes w'\mapsto w\otimes w\otimes w')}
\otimes^l_{\mathbb Q_p}\Omega^{1,an}_{/K,\log,\mathcal O}) \\
\in\PSh(\SmVar(K)).
\end{eqnarray*}
and
\begin{equation*}
\wedge^l_{\hat O}\Omega^{1,an}_{/K}:=
\coker(\oplus_{I_2\subset[1,\ldots,l]}\otimes^{l-1}_{\hat O}\Omega^{1,an}_{/K}
\xrightarrow{\oplus_{I_2\subset[1,\ldots,l]}\Delta_{I_2}:=(w\otimes w'\mapsto w\otimes w\otimes w')})
\otimes^l_{\hat O}\Omega^{1,an}_{/K}
\in\PSh(\SmVar(K)).
\end{equation*}

We will use the following standard constructions and facts on closed pairs of algebraic varieties
(see \cite{CD}) :
\begin{itemize}
\item Let $(X,Z)\in\Sch^2$ with $X\in\Sch$ a noetherian scheme and $Z\subset X$ a closed subset.
We have the deformation $(D_ZX,\mathbb A^1_Z)\to\mathbb A^1$, $(D_ZX,\mathbb A^1_Z)\in\Sch^2$ 
of $(X,Z)$ by the normal cone $C_{Z/X}\to Z$, i.e. such that 
\begin{equation*}
(D_ZX,\mathbb A^1_Z)_s=(X,Z), \, s\in\mathbb A^1\backslash 0, \;  (D_ZX,\mathbb A^1_Z)_0=(C_{Z/X},Z).
\end{equation*}
We denote by $i_1:(X,Z)\hookrightarrow (D_ZX,\mathbb A^1_Z)$ and 
$i_0:(C_{Z/X},Z)\hookrightarrow (D_ZX,\mathbb A^1_Z)$ the closed embeddings in $\Sch^2$.
\item Let $k$ a field of characteristic zero. Let $X\in\SmVar(k)$. 
For $Z\subset X$ a closed subset of pure codimension $c$,
consider a desingularisation $\epsilon:\tilde Z\to Z$ of $Z$ and denote $n:\tilde Z\xrightarrow{\epsilon}Z\subset X$.
We have then the morphism in $\DA(k)$
\begin{equation*}
G_{Z,X}:M(X)\xrightarrow{D(\mathbb Z(n))}M(\tilde Z)(c)[2c]\xrightarrow{\mathbb Z(\epsilon)}M(Z)(c)[2c]
\end{equation*}
where $D:\Hom_{\DA(k)}(M_c(\tilde Z),M_c(X))\xrightarrow{\sim}\Hom_{\DA(k)}(M(X),M(\tilde Z)(c)[2c])$
is the duality isomorphism from the six functors formalism (moving lemma of Suzlin and Voevodsky)
and $\mathbb Z(n):=\ad(n_!,n^!)(a_X^!\mathbb Z)$, noting that $n_!=n_*$ since $n$ is proper and that
$a_X^!=a_X^*[d_X]$ and $a_{\tilde Z}^!=a_{\tilde Z}^*[d_Z]$ since $X$, resp. $\tilde Z$, are smooth
(considering the connected components, we may assume $X$ and $\tilde Z$ of pure dimension).
\item Let $X\in\SmVar(k)$ and $Z\subset X$ a smooth closed subvariety.
The closed embeddings $i_1:(X,Z)\hookrightarrow (D_ZX,\mathbb A^1_Z)$ and 
$i_0:(C_{Z/X},Z)\hookrightarrow (D_ZX,\mathbb A^1_Z)$ in $\SmVar^2(k)$ induces isomorphisms of motives
$\mathbb Z(i_1):M_Z(X)\xrightarrow{\sim}M_{\mathbb A^1_Z}(D_ZX)$ and 
$\mathbb Z(i_0):M_Z(N_{Z/X})\xrightarrow{\sim}M_{\mathbb A^1_Z}(D_ZX)$ in $\DA(k)$.
We get the excision isomorphism in $\DA(k)$
\begin{equation*}
P_{Z,X}:=\mathbb Z(i_0)^{-1}\circ\mathbb Z(i_1):M_Z(X)\xrightarrow{\sim}M_Z(N_{Z/X}).
\end{equation*}
We have 
\begin{equation*}
Th(N_{Z/X})\circ P_{Z,X}\circ\gamma^{\vee}_Z(\mathbb Z_X)=G_{Z,X}:=D(\mathbb Z(i)):M(X)\to M(Z)(d)[2d].
\end{equation*}
\end{itemize}

The result of this section is the following :

\begin{prop}\label{GAGAlog}
Let $K$ a field of characteristic zero which is complete for a $p$-adic norm. 
\begin{itemize}
\item[(i)]Let $X\in\PSmVar(K)$.
For each $j,l\in\mathbb Z$, the isomorphism
$c^*:\mathbb H_{pet}^j(X,\Omega^{\bullet}_X)\xrightarrow{\sim}\mathbb H_{pet}^j(X,\Omega^{\bullet}_{\hat X^{(p)}})$
and its inverse preserve logarithmic classes, that is
\begin{equation*}
c^*(H^j(m\circ(OL_X\otimes I))(H_{pet}^{j-l}(X,\Omega^l_{X^{pet},\log,\mathcal O}\otimes\mathbb Z_p)))
=H^jOL_{\hat X^{(p)}}(H_{pet}^{j-l}(X,\Omega^l_{\hat X^{(p)},\log,\mathcal O}))
\end{equation*}
is an isomorphism.
\item[(ii)]Let $X\in\SmVar(K)$.
For each $j,l\in\mathbb Z$, the isomorphism
$c^*:\mathbb H_{pet}^j(X,\Omega^{\bullet}_X)\xrightarrow{\sim}\mathbb H_{pet}^j(X,\Omega^{\bullet}_{\hat X^{(p)}})$
and its inverse preserve logarithmic classes, that is
\begin{equation*}
c^*(H^j(m\circ(OL_X\otimes I))(H_{pet}^{j-l}(X,\Omega^l_{X^{pet},\log,\mathcal O}\otimes\mathbb Z_p)))
=H^jOL_{\hat X^{(p)}}(H_{pet}^{j-l}(X,\Omega^l_{\hat X^{(p)},\log,\mathcal O}))
\end{equation*}
is an isomorphism.
\end{itemize}
\end{prop}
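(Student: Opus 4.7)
\emph{The easy inclusion.} The morphism of ringed topoi $c:\hat X^{(p)}\to X$ sends algebraic units to formal units, so by functoriality of the $d\log$ construction there is a well-defined map of sheaves $c^*\Omega^l_{X,\log,\mathcal O}\otimes\mathbb Z_p\to\Omega^l_{\hat X^{(p)},\log,\mathcal O}$: for any etale-local unit $f$, $c^*(df/f)=d(c^*f)/c^*f$. Passing to hypercohomology this gives the inclusion $\subset$ in both (i) and (ii), so the real content of the proposition is the reverse inclusion $\supset$.

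\emph{Reverse direction for part (i).} The underlying isomorphism on the full de Rham side is GAGA applied to each $\Omega^l_X\in\Coh_{O_X}(X)$ together with the stupid-filtration spectral sequence (already noted in the preliminaries, using that on the etale/pro-etale site $c_*=Rc_*$ for coherent sheaves). To identify the logarithmic subspaces, my plan is to first handle $l=1$ and then bootstrap. For $l=1$ I would use the Kummer sequence $1\to\mu_{p^n}\to O^*\xrightarrow{(-)^{p^n}}O^*\to 1$ on both $X^{pet}$ and $\hat X^{(p),pet}$: combined with GAGA for the (coherent, or equivalently etale torsion) cohomology of $\mu_{p^n}$ in the proper case, this yields an isomorphism $c^*:H^{j-1}(X,O^*)_{\mathbb Z_p}\xrightarrow{\sim}H^{j-1}(\hat X^{(p)},\hat O^*)$, and composing with the injective $d\log$ map produces the desired isomorphism $H^{j-1}(X,\Omega^1_{X,\log,\mathcal O}\otimes\mathbb Z_p)\xrightarrow{\sim}H^{j-1}(\hat X^{(p)},\Omega^1_{\hat X^{(p)},\log,\mathcal O})$. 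For general $l$, use the symbol surjection from Milnor $K$-theory $\mathcal K^M_l\twoheadrightarrow\Omega^l_{\log}$: locally, any element of $\Omega^l_{\log}$ is a $\mathbb Z_p$-linear combination of wedges $d\log f_1\wedge\cdots\wedge d\log f_l$. Combined with compatibility of $c^*$ with cup products on the Koszul-type presentation (tensor then antisymmetrize, as already set up in the transfer discussion $\wedge^l_{\mathbb Q_p}\Omega^{1,an}_{/K,\log,\mathcal O}$), this reduces the logarithmic identification in degree $l$ to the degree-one case.

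\emph{Part (ii) and the main obstacle.} For $X$ smooth but non-proper, pick a smooth compactification $j:X\hookrightarrow\bar X$ with $D=\bar X\setminus X$ a normal crossing divisor, and work with the Deligne logarithmic complex $\Omega^\bullet_{\bar X}(\log D)$ which computes $H^\bullet_{DR}(X)$. Its logarithmic subcomplex has two kinds of generators: $d\log$ of units on $\bar X$ (handled by part (i) applied to $\bar X$) and the residue forms $d\log t_i$ along branches of $D$, which are algebraic because $D$ is, hence stable under $c^*$. The main obstacle is the higher-$l$ step in the reverse inclusion: the symbol surjection and cup-product argument only control log classes locally, so globally one must lift a formal \v{C}ech cocycle of $d\log$ wedges to an algebraic one on a common refinement, and this is not automatic. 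A cleaner conceptual replacement would invoke $p$-adic Hodge theory \`a la Bloch--Kato / Hyodo--Kato: $\Omega^l_{\log}$ computes a Tate-twisted $p$-adic vanishing-cycle sheaf whose cohomology realises a Galois-invariant subspace of the etale cohomology $H^j_{et}(X_{\bar K},\mathbb Z_p)(l)^G$, a topological invariant unchanged under $p$-adic completion of the integral model; this directly forces the images of algebraic and formal log classes to coincide.
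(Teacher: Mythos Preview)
Your approach is genuinely different from the paper's, and the gap you yourself flag in the higher-$l$ step is real and not closed by what you wrote.

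The paper does not use the Kummer sequence or Milnor $K$-theory symbol maps at all. Instead it proves something stronger that makes the comparison automatic: for $X\in\PSmVar(K)$, the formal logarithmic classes $H^jOL_{\hat X^{(p)}}(H_{pet}^{j-l}(X,\Omega^l_{\hat X^{(p)},\log,\mathcal O}))$ vanish when $j\neq 2l$ and consist exactly of $\mathbb Z_p$-linear combinations of algebraic cycle classes $[Z_i]$ when $j=2l$. The mechanism is motivic: the presheaves $\Omega^l_{/(O_K/p^n),\log}$ on $\SmVar(K)$ are $\mathbb A^1$-invariant with transfers, hence $\mathbb A^1$-local by Voevodsky, so one has purity $L_Z^{l,j-l}(X)\simeq L^{l-d,j-l-d}(Z)$ for smooth $Z\subset X$ of codimension $d$. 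Given a formal log class $\alpha$, the paper shows its restriction to any affine $U$ admitting an \'etale map to affine space vanishes (by a direct argument with $w$-contractible pro-\'etale covers and the fact that a logarithmic form is exact iff it is zero), hence $\alpha$ is supported on a divisor; iterating, $\alpha$ is supported in codimension $\min(l,j-l)$, forcing it to be zero or a cycle class. Since cycle classes are manifestly algebraic logarithmic classes, both sides of the proposition coincide. Part (ii) is then reduced to (i) via the Gysin distinguished triangle $M(X)\to M(\bar X)\to\oplus M(D_i)(-1)[-2]\to\cdots$ for a normal-crossing compactification.

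Your route: the $l=1$ step via Kummer is reasonable in outline, though you should be careful that $d\log$ on cohomology is not literally injective (its kernel on sheaves is roots of unity, and you must track how that interacts with $\otimes\mathbb Z_p$). The real problem is that the passage from $l=1$ to general $l$ by cup product and the symbol surjection $\mathcal K^M_l\twoheadrightarrow\Omega^l_{\log}$ does not globalize: surjectivity of a sheaf map says nothing about surjectivity on higher cohomology, and cup products of cohomology classes do not generate all classes of a tensor/wedge sheaf. You acknowledge this, but the Bloch--Kato/Hyodo--Kato replacement you sketch is exactly the kind of $p$-adic Hodge input the paper uses elsewhere (Proposition~\ref{LatticeLog}), and invoking it here would be close to circular in the overall logic of the paper, which needs Proposition~\ref{GAGAlog} as an independent input. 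The paper's motivic-support argument avoids this circularity and handles all $l$ uniformly; that is what is missing from your proposal.
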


\begin{proof}
\noindent(i):Consider, for $j,l\in\mathbb Z$, the presheaf 
\begin{equation*}
L^{l,j-l}:=H^jOL_{/K,an}(H^{j-l}E^{\bullet}_{pet}\Omega_{/K,\log,\mathcal O}^{l,an})\in\PSh(\SmVar(K)). 
\end{equation*}
By definition, for $X\in\SmVar(K)$, 
\begin{equation*}
L^{l,j-l}(X):=H^jOL_{/K,an}(H^{j-l}\Hom(\mathbb Z(X),E^{\bullet}_{pet}(r^*\Omega_{/K,\log,\mathcal O}^{l,an})))
=H^jOL_{\hat X^{(p)}}(H_{pet}^{j-l}(X,\Omega^l_{\hat X^{(p)},\log,\mathcal O})).
\end{equation*}
Consider also for $X\in\SmVar(K)$ and $Z\subset X$ a closed subset,
\begin{equation*}
L_Z^{l,j-l}(X):=H^jOL_{/K,an}(H^{j-l}\Hom(\mathbb Z(X,X\backslash Z), 
E^{\bullet}_{pet}(r^*\Omega_{/K,\log,\mathcal O}^{l,an})))
=H^jOL_{\hat X^{(p)}}(H_{pet,Z}^{j-l}(X,\Omega_{\hat X^{(p)},\log,\mathcal O}^l)).
\end{equation*}
For the second equalities of $L^{l,j-l}(X)$ and $L_Z^{l,j-l}(X)$, note that 
\begin{equation*}
a_{et}OL_{/K,an}(r^*\Omega_{/K,\log,\mathcal O}^{l,an})_{|X^{pet}}
=a_{et}OL_{\hat X^{(p)}}(r^*\Omega_{\hat X^{(p)},\log,\mathcal O}),
\end{equation*}
where $X^{pet}\subset\Var(K)^{pet}$ is the small pro etale site and $a_{et}:\PSh(X^{pet})\to\Shv_{et}(X^{pet})$
is the sheaftification functor. 
For $l,n\in\mathbb N$, consider the presheaves in $(\Sch^{int}/O_K)^{pet}$
\begin{eqnarray*}
OL_{/O_K,p^n}:\Omega^l_{/(O_K/p^n),\log}\hookrightarrow\Omega^l_{/(O_K/p^n)}, \;
\mbox{for} \, X^{\mathcal O}\in(\Sch^{int}/O_K)^{pet}, \\
\Omega^l_{/(O_K/p^n),\log}(X^{\mathcal O}):=\Omega_{X^{\mathcal O}/p^n,\log}(X^{\mathcal O})
\hookrightarrow\Omega^l_{/(O_K/p^n)}(X^{\mathcal O}):=\Omega_{X^{\mathcal O}/p^n}(X^{\mathcal O})
\end{eqnarray*}
which induces the presheaves in $\SmVar(K)^{pet}$
\begin{equation*}
OL_{/(O_K/p^n)}:=r^*OL_{/O_K,p^n}:r^*\Omega^l_{/(O_K/p^n),\log}\hookrightarrow r^*\Omega^l_{/(O_K/p^n)}.
\end{equation*}
Note that by definition $\Omega^l_{/(O_K/p^n),\log}=\nu^*\nu_*\Omega^l_{/(O_K/p^n),\log}$ and 
$\Omega^l_{/(O_K/p^n)}=\nu^*\nu_*\Omega^l_{/(O_K/p^n)}$, where $\nu:(\Sch^{int}/O_K)^{pet}\to\Sch^{int}/O_K$.
We have then for $X\in\SmVar(K)$
\begin{eqnarray*}
L^{l,j-l}(X)
&=&H^jOL_{\hat X^{(p)}}(\varprojlim_{n\in\mathbb N}H_{et}^{j-l}(X,\Omega^l_{X^{\mathcal O}/p^n,\log,\mathcal O})) \\
:&=&H^jOL_{/K,an}(\varprojlim_{n\in\mathbb N}H^{j-l}\Hom(\mathbb Z(X,X\backslash Z), 
E^{\bullet}_{et}(r^*\Omega^l_{/(O_K/p^n),\log}))),
\end{eqnarray*}
since 
\begin{equation*}
R\varprojlim_{n\in\mathbb N}\Omega^l_{X^{\mathcal O}/p^n,\log,\mathcal O}
\simeq\varprojlim_{n\in\mathbb N}\Omega^l_{X^{\mathcal O}/p^n,\log,\mathcal O}\in D(X^{pet}) 
\end{equation*}
as the map in $\PSh(X^{pet})$
\begin{equation*}
r^*\Omega(/p^{n'}):r^*\Omega^l_{X^{\mathcal O}/p^{n-n'},\log,\mathcal O}\to r^*\Omega^l_{X^{\mathcal O}/p^n,\log,\mathcal O},
\end{equation*}
is surjective for the etale topology, where $/p^{n'}:X^{\mathcal O}/p^n\hookrightarrow X^{\mathcal O}/p^{n-n'}$
is induced by the quotient map $/p^{n'}:O_{X^{\mathcal O}}/p^{n-n'}\to O_{X^{\mathcal O}}/p^n$,
and as the pro-etale site is a replete topos (c.f. \cite{BSch}).
We have for $X\in\SmVar(K)$ and $Z\subset X$ a closed subset,
\begin{eqnarray*}
L_Z^{l,j-l}(X)
&=&H^jOL_{\hat X^{(p)}}(\varprojlim_{n\in\mathbb N}H_{et,Z}^{j-l}(X,\Omega^l_{X^{\mathcal O}/p^n,\log,\mathcal O})) \\
:&=&H^jOL_{/K,an}(\varprojlim_{n\in\mathbb N}H^{j-l}\Hom(\mathbb Z(X,X\backslash Z), 
E^{\bullet}_{et}(r^*\Omega^l_{/(O_K/p^n),\log}))).
\end{eqnarray*}
The presheaves $\Omega^l_{/(O_K/p^n),\log}\in\PSh(\SmVar(K))$, $l,n\in\mathbb N$, 
are $\mathbb A^1$ invariant and admit transfers.
Hence by a theorem of Voevodsky (c.f. \cite{CD} for example), 
$\Omega^l_{/(O_K/p^n),\log}\in\PSh(\SmVar(K))$, $l,n\in\mathbb N$, 
are $\mathbb A^1$ local since they are $\mathbb A^1$ invariant and admit transfers.
This gives in particular, for $Z\subset X$ a smooth subvariety of (pure) codimension $d$, an isomorphism
\begin{eqnarray*}
(\Omega^l_{/(O_K/p^n),\log}(P_{Z,X}))_{n\in\mathbb N}:
L_Z^{l,j-l}(X)\xrightarrow{\sim}L_Z^{l,j-l}(N_{Z/X})\xrightarrow{\sim}L^{l-d,j-l-d}(Z).
\end{eqnarray*}
Let 
\begin{eqnarray*}
\alpha=OL_{\hat X^{(p)}}(\alpha)
\in L^{l,j-l}(X):=H^jOL_{\hat X^{(p)}}(H_{pet}^{j-l}(X,\Omega^l_{\hat X^{(p)},\log,\mathcal O})) 
=H^{j-l}\Hom(\mathbb Z(X),E^{\bullet}_{pet}(r^*\Omega_{/K,\log,\mathcal O}^{l,an})).
\end{eqnarray*} 
Let $U\subset X$ be an affine open subset such that there exists an etale map $e:U\to T^{d_X}\subset\mathbb A^{d_X}$. 
Let $r=(r_i):Y:=\varprojlim_{i\in I}U_i\to U$ be a faithfully flat pro-etale map with $Y$ w-contractile. 
In particular $\bar r_i(Y^(0))=U$, where $Y^{(0)}\subset Y$ are the closed points.
As $\Omega_{/K,\log,\mathcal O}^{l,an}$ consists of single presheaf, we have 
\begin{equation*}
r^*\alpha=0\in H^j_{DR}(\hat Y^{(p)})=H^j\Omega^{\bullet}_{\hat X^{(p)}}(Y)
=\mathbb H_{pet}^j(Y,\Omega^{\bullet}_{\hat X^{(p)}}), 
\end{equation*}
that is
\begin{equation*}
r^*\alpha=0=[\partial(\eta_n)_{n\in\mathbb N}]=[(\partial\eta_n)_{n\in\mathbb N}]\in H^j_{DR}(\hat Y^{(p)}), 
\, \mbox{with} \,(\eta_n)_{n\in\mathbb N}\in\Omega^{j+1}_{\hat X^{(p)}}(Y). 
\end{equation*}
Denote $j:U\hookrightarrow X$ the open embedding. Consider 
\begin{equation*}
j^*\alpha=[(w_n)_{n\in\mathbb N}]\in H^j_{DR}(\hat U^{(p)})=H^j\Omega^{\bullet}_{\hat X^{(p)}}(U)
=\mathbb H_{pet}^j(U,\Omega^{\bullet}_{\hat X^{(p)}}).
\end{equation*}
Let $n\in\mathbb N$. There exists $i_n\in I$ (depending on $n$) such that $\eta_n=r_{i_n}^*\tilde\eta_n$,
with $\tilde\eta_n\in\Omega^{j+1}_{\hat X^{(p)}}(U_{i_n})$. 
Then, there exists $a_{ij}:U_{j_n}\to U_{i_n}$, $j_n,a_{ij}\in I$ such that
\begin{equation*}
r_{j_n}^*w_n=a_{ij}^*\partial\tilde\eta_n+\partial\beta_n\in\Omega^{\bullet}_{X^{\mathcal O}/p^n}(U_{j_n}) 
\end{equation*}
Since $r_{j_n}:U_{j_n}\to U$ is faithfully flat, we get  
\begin{equation*}
w_n=\partial(\tilde\eta_n+\beta_n)\in\Omega^{\bullet}_{X^{\mathcal O}/p^n}(U). 
\end{equation*}
Hence $j^*\alpha=0\in L^{l,j-l}(U)$.
Let $r_{\bullet}:Y_{\bullet}\to U$ be a pro-etale cover by $w$-contractile schemes. Then,
\begin{equation*}
j^*\alpha=(\theta_I)\in H_{pet}^{j-l}(U,\Omega^l_{\hat X^{(p)},\log,\mathcal O})
=H^{j-l}\Gamma(Y_{\bullet},\Omega^l_{\hat X^{(p)},\log,\mathcal O}).
\end{equation*}
Since $j^*\alpha=0\in L^{l,j-l}(U)$, we have 
\begin{equation*}
\theta_I=\tilde\theta_{I'|Y_I}+\partial\gamma_I\in\Gamma(Y_I,\Omega^l_{\hat X^{(p)}}),
\end{equation*}
with $I\subset I'$, $\tilde\theta_{I'}\in\Gamma(Y_{I'},\Omega^l_{\hat X^{(p)}})$ and 
$\gamma_I\in\Gamma(Y_I,\Omega^{l+1}_{\hat X^{(p)}})$.
Since $\Omega_U$ is a trivial vector bundle as $U\to\mathbb A^{d_X}$ is etale, 
and since a logarithmic form is exact if and only if it vanishes, we get $\theta_I=\tilde\theta_{I'|Y_I}$ and 
\begin{equation*}
j^*\alpha=\partial_{\bullet}(\tilde\theta_{I'})=0\in H_{pet}^{j-l}(U,\Omega^l_{\hat X^{(p)},\log,\mathcal O})
=H^{j-l}\Gamma(Y_{\bullet},\Omega^l_{\hat X^{(p)},\log,\mathcal O}).
\end{equation*}
Considering a divisor $X\backslash U\subset D\subset X$, we get 
\begin{equation*}
\alpha=H^{j-l}E^{\bullet}_{pet}(r^*\Omega_{/K,\log,\mathcal O}^{l,an})(\gamma^{\vee}_D)(\alpha), \, 
\alpha\in L_D^{l,j-l}(X).
\end{equation*}
We then get by induction (restricting to the smooth locus of the divisors)
a closed subset $Z\subset X$ of pure codimension $c=min(l,j-l)$ such that 
\begin{equation*}
\alpha=H^{j-l}E^{\bullet}_{pet}(r^*\Omega_{/K,\log,\mathcal O}^{l,an})(\gamma^{\vee}_Z)(\alpha), \, 
\alpha\in L_Z^{l,j-l}(X).
\end{equation*}
Hence, we get 
\begin{itemize}
\item if $j\neq 2l$, $\alpha=0$, we use the fact that $X$ is projective for $j<2l$, 
\item if $j=2l$, $\alpha\in\oplus_{1\leq t\leq s}\mathbb Z_p[Z_i]$,
where $(Z_i)_{1\leq i\leq t}\subset Z$ are the irreducible components of $Z$.
\end{itemize}

\noindent(ii): Take a compactification $\bar X\in\PSmVar(K)$ of $X$ with 
$\bar X\backslash X=\cup_{1\leq i\leq s}D_i\subset\bar X$ a normal crossing divisor.
Then (ii) follows from (i) applied to $\bar X$ and $D_I:=\cap_{i\in I} D_i$ for $I\subset[1,\ldots,s]$
by the distinguish triangle in $\DA(K)$
\begin{equation*}
M(X)\to M(\bar X)\to\oplus_{1\leq i\leq s}M(D_i)(-1)[-2]\to\cdots\to M(D_{[1,\cdots,s]})(-s)[-2s].
\end{equation*}
\end{proof}

%===================================================================================
\subsection{$p$ adic integral periods}
%====================================================================================

For $k$ a field of finite type over $\mathbb Q$ and $X\in\SmVar(k)$, 
we denote $\delta(k,X)\subset N$ the finite set consisting of prime numbers 
such that if $p\in\mathbb N\backslash\delta(k,X)$ is a prime number, $k$ is unramified at $p$ and 
there exists an integral model $X^{\mathcal O}_{\hat k_{\sigma_p}}\in\Sch^{int}/O_{\hat k_{\sigma_p}}$ 
of $X_{\hat k_{\sigma_p}}$ with good reduction modulo $p$ for all embeddings $\sigma_p:k\hookrightarrow\mathbb C_p$,
$\hat k_{\sigma_p}\subset\mathbb C_p$ being the $p$-adic completion of $k$ with respect to $\sigma_p$.

Let $k$ a field of finite type over $\mathbb Q$. 
Denote $\bar k$ the algebraic closure of $k$ and $G=Gal(\bar k/k)$ the absolute Galois group of $k$.
Let $X\in\SmVar(k)$ a smooth variety. 
Take a compactification $\bar X\in\PSmVar(k)$ of $X$ such that $D:=\bar X\backslash X\subset X$ is a normal crossing divisor,
and denote $j:X\hookrightarrow\bar X$ the open embedding.  
Let $p\in\mathbb N$ a prime number. Consider an embedding $\sigma_p:k\hookrightarrow\mathbb C_p$.
Then $k\subset\bar k\subset\mathbb C_p$ and $k\subset\hat k_{\sigma_p}\subset\mathbb C_p$,
where $\hat k_{\sigma_p}$ is the $p$-adic field 
which is the completion of $k$ with respect the $p$ adic norm given by $\sigma_p$.
Denote $\hat G_{\sigma_p}=Gal(\mathbb C_p/\hat k_{\sigma_p})=Gal(\bar{\mathbb Q_p}/\hat k_{\sigma_p})$ 
the Galois group of $\hat k_{\sigma_p}$.
Recall (see section 2) that 
$\underline{\mathbb Z_p}_{X_{\mathbb C_p}}:=
\varprojlim_{n\in\mathbb N}\nu_X^*(\mathbb Z/p^n\mathbb Z)_{X_{\mathbb C_p}^{et}}\in\Shv(X_{\mathbb C_p}^{pet})$ and 
$\Omega^{\bullet}_{X^{pet}_{\mathbb C_p},\log,\mathcal O}:=
\varprojlim_{n\in\mathbb N}\nu_X^*\Omega^{\bullet}_{X^{pet}_{O_{\mathbb C_p}/p^nO_{\mathbb C_p}},\log}
\in C(X_{\mathbb C_p}^{pet})$.
We have then the commutative diagram in $C_{\mathbb B_{dr}fil,\hat G_{\sigma_p}}(\bar X_{\mathbb C_p}^{an,pet})$
\begin{equation*}
\xymatrix{j_*E_{pet}(\mathbb B_{dr,X_{\mathbb C_p}},F)\ar[rrr]^{j_*E_{pet}(\alpha(X))} & \, & \, &
E_{pet}((\Omega^{\bullet}_{X_{\mathbb C_p}}(\log D_{\mathbb C_p}),F_b)
\otimes_{O_{\bar X_{\mathbb C_p}}}(O\mathbb B_{dr,\bar X_{\mathbb C_p},\log},F)) \\
j_*E_{pet}(\underline{\mathbb Z_p}_{X_{\mathbb C_p}})
\ar[u]^{E_{pet}(j_*\iota'_{X_{\mathbb C_p}^{pet}})_j:=E_{pet}(l\mapsto l.1)_j}
\ar[rrr]^{j_*E_{pet}(\iota_{X_{\mathbb C_p}^{pet}})} & \, & \, &
j_*E_{pet}(\Omega^{\bullet}_{X_{\mathbb C_p},\log,\mathcal O}\otimes\underline{\mathbb Z_p}_{X_{\mathbb C_p}},F_b)
\ar[u]_{E_{pet}(m\circ (OL_X\otimes I)):=E_{pet}((w\otimes\lambda)\mapsto (w\otimes\lambda))}},
\end{equation*}
where for $j':U'\hookrightarrow X'$ an open embedding with $X'\in\RTop$ and $\tau$ a topology on $\RTop$
we denote for $m:j_*Q\to Q'$ with $Q\in\PSh_{O}(U')$, $Q'\in\PSh_O(X')$ the canonical map in $C_O(X')$
\begin{equation*}
E_{\tau}^0(m)_j:j_*E^0_{\tau}(Q)\to E^0_{\tau}(j_*Q)\xrightarrow{E_{\tau}^0(m)}E^0_{\tau}(Q'), 
\end{equation*}
giving by induction the canonical map $E_{\tau}(m)_j:j_*E_{\tau}(Q)\to E_{\tau}(Q')$ in $C_O(X')$.
The main results of \cite{Chinois} state that 
\begin{itemize}
\item the map in $C_{\mathbb B_{dr}fil}(\bar X_{\hat k_{\sigma_p}}^{an,pet})$
\begin{eqnarray*}
\alpha(X):(\mathbb B_{dr,\bar X_{\hat k_{\sigma_p}},\log D_{\hat k_{\sigma_p}}},F)\hookrightarrow
(\Omega^{\bullet}_{\bar X_{\hat k_{\sigma_p}}}(\log D_{\hat k_{\sigma_p}}),F_b)\otimes_{O_{\bar X_{\hat k_{\sigma_p}}}}
(O\mathbb B_{dr,\bar X_{\hat k_{\sigma_p}},\log D_{\hat k_{\sigma_p}}},F)
\end{eqnarray*}
is a filtered quasi-isomorphism, that is, 
the induced map in $C_{\mathbb B_{dr}fil,\hat G_{\sigma_p}}(\bar X_{\mathbb C_p}^{an,pet})$
\begin{eqnarray*}
\alpha(X):=\alpha(X)_{\mathbb C_p}:(\mathbb B_{dr,\bar X_{\mathbb C_p},\log D_{\mathbb C_p}},F)\hookrightarrow
(\Omega^{\bullet}_{\bar X_{\mathbb C_p}}(\log D_{\mathbb C_p}),F_b)\otimes_{O_{\bar X_{\mathbb C_p}}}
(O\mathbb B_{dr,\bar X_{\mathbb C_p},\log D_{\mathbb C_p}},F)
\end{eqnarray*}
is thus a filtered quasi-isomorphism, 
\item the map in $D_{\mathbb Z_pfil}$
\begin{equation*}
T(a_X,a_X,\otimes)(Rj_*\mathbb Z_{p,X^{et}}):
R\Gamma(X_{\mathbb C_p},\mathbb Z_{p,X^{et}})\otimes_{\mathbb Z_p}(\mathbb B_{dr,\mathbb C_p},F)
\to R\Gamma(\bar X_{\mathbb C_p},(\mathbb B_{dr,\bar X_{\mathbb C_p},\log D_{\mathbb C_p}},F))
\end{equation*}
is an isomorphism.
\end{itemize}
Hence, we get the isomorphism in $D_{fil}(\mathbb B_{dr},\hat G_{\sigma_p})$
\begin{eqnarray*}
R\alpha(X):=R\Gamma(\bar X_{\mathbb C_p},\alpha(X))\circ T(a_X,a_X,\otimes)(Rj_*\mathbb Z_{p,X^{et}}): \\
R\Gamma(X_{\mathbb C_p},\mathbb Z_{p,X^{et}})\otimes_{\mathbb Z_p}(\mathbb B_{dr,\mathbb C_p},F)
\xrightarrow{\sim} 
R\Gamma(\bar X_{\mathbb C_p},
(\Omega^{\bullet}_{\bar X^{et}_{\mathbb C_p}}(\log D_{\mathbb C_p}),F_b)\otimes_{O_{\bar X_{\mathbb C_p}}}
(O\mathbb B_{dr,\bar X_{\mathbb C_p},\log D_{\mathbb C_p}},F)) \\
\xrightarrow{=} 
R\Gamma(\bar X_{\mathbb C_p},
(\Omega^{\bullet}_{\bar X^{et}_{\mathbb C_p}},F_b)\otimes_{O_{\bar X_{\mathbb C_p}}}
j_{*Hdg}(O_{X_{\mathbb C_p}},F_b)\otimes_{O_{X_{\mathbb C_p}}}(O\mathbb B_{dr,X_{\mathbb C_p}},F))
\end{eqnarray*}
which gives for each $n\in\mathbb Z$ a filtered isomorphism of $\hat G_{\sigma_p}$-modules
\begin{eqnarray*}
H^nR\alpha(X):
H^n_{et}(X_{\mathbb C_p},\mathbb Z_{p,X^{et}})\otimes\mathbb B_{dr,\mathbb C_p}\xrightarrow{\sim}
H^n_{DR}(X_{\hat k_{\sigma_p}})\otimes_{\hat k_{\sigma_p}}\mathbb B_{dr,\mathbb C_p}
\end{eqnarray*}
so that we can recover the Hodge filtration on $H^*_{DR}(X)$ by the action of $\hat G_{\sigma_p}$.
Let $p\in\mathbb N\in\delta(k,X)$.
Take a compactification $\bar X\in\PSmVar(k)$ of $X$ such that $D:=\bar X\backslash X=\cup_iD_i\subset X$ 
is a normal crossing divisor such that $D_i$ admit an integral model.
The main result of \cite{ChinoisCrys} say that the embedding in $C((\bar X^{\mathcal O}_{\hat k_{\sigma_p}})^{Falt})$
\begin{equation*}
\alpha(X):\mathbb B_{st,\bar X_{\hat k_{\sigma_p}},\log}\hookrightarrow 
a_{\bullet*}\Omega^{\bullet}_{X^{\mathcal O,\bullet}_{\hat k_{\sigma_p}}}(\log D^{\mathcal O}_{\hat k_{\sigma_p}})
\otimes_{O_{X^{\mathcal O}}}O\mathbb B_{st,\bar X^{\bullet}_{\hat k_{\sigma_p}},\log D_{\hat k_{\sigma_p}}}
\end{equation*}
is a filtered quasi-isomorphism compatible with the action of the Frobenius $\phi_p$ and the monodromy $N$,
note that we have a commutative diagram in $C_{fil}(X^{an,pet}_{\hat k_{\sigma_p}})$
\begin{equation*}
\xymatrix{\mathbb B_{st,\bar X_{\hat k_{\sigma_p}},\log D_{\hat k_{\sigma_p}}}\ar[rr]^{\alpha(X)}\ar[d]^{\subset} & \, &
O\mathbb B_{st,X_{\hat k_{\sigma_p}},\log D_{\hat k_{\sigma_p}}}\otimes_{O_X}
\Omega_{X_{\hat k_{\sigma_p}}}^{\bullet}(\log D_{\hat k_{\sigma_p}})
\ar[d]^{\subset} \\
\mathbb B_{dr,X_{\mathbb C_p},\log D_{\mathbb C_p}}\ar[rr]^{\alpha(X)} & \, &
O\mathbb B_{dr,\bar X_{\mathbb C_p},\log D_{\mathbb C_p}}\otimes_{O_X}
\Omega_{X_{\mathbb C_p}}^{\bullet}(\log D_{\mathbb C_p})}.
\end{equation*}
This gives if $(X^{\mathcal O}_{\hat k_{\sigma_p}},N_{U,\mathcal O})$ is log smooth, 
for each $j\in\mathbb Z$, a filtered isomorphism of filtered abelian groups
\begin{eqnarray*}
H^jR\alpha(X):H_{et}^j(X_{\mathbb C_p},\mathbb Z_p)\otimes_{\mathbb Z_p}\mathbb B_{st,\hat k_{\sigma_p}}
\xrightarrow{H^jT(a_X,\mathbb B_{st})^{-1}}
H_{et}^j((X,N)^{Falt})(\mathbb B_{st,\bar X_{\hat k_{\sigma_p}},\log D_{\hat k_{\sigma_p}}}) \\
\xrightarrow{H^jR\Gamma((\bar X_{\hat k_{\sigma_p}}^{\mathcal O},D_{\hat k_{\sigma_p}}),\alpha(X))}
H^j_{DR}(X_{\hat k_{\sigma_p}})\otimes_{\hat k_{\sigma_p}}\mathbb B_{st,\hat k_{\sigma_p}}
\end{eqnarray*}
compatible with the action of $Gal(\mathbb C_p/\hat k_{\sigma_p})$, of the Frobenius $\phi_p$ and the monodromy $N$.

\begin{defi}\label{walpha}
Let $k$ a field of finite type over $\mathbb Q$. 
Denote $\bar k$ the algebraic closure of $k$ and $G=Gal(\bar k/k)$ the absolute Galois group of $k$.
Let $X\in\SmVar(k)$ a smooth variety. 
Take a compactification $\bar X\in\PSmVar(k)$ of $X$ such that $D:=\bar X\backslash X\subset X$ is a normal crossing divisor,
and denote $j:X\hookrightarrow\bar X$ the open embedding.  
Let $p\in\mathbb N$ a prime number. 
Consider an embedding $\sigma_p:k\hookrightarrow\mathbb C_p$.
Then $k\subset\bar k\subset\mathbb C_p$ and $k\subset\hat k_{\sigma_p}\subset\mathbb C_p$,
where $\hat k_{\sigma_p}$ is the $p$-adic field 
which is the completion of $k$ with respect the $p$ adic norm given by $\sigma_p$.
For $\alpha\in H^j_{et}(X_{\mathbb C_p},\mathbb Z_p)$, we denote
\begin{eqnarray*}
w(\alpha):=H^jR\alpha(X)(\alpha\otimes 1)\in 
H^j_{DR}(X_{\hat k_{\sigma_p}})\otimes_{\hat k_{\sigma_p}}\mathbb B_{dr,\mathbb C_p}
\end{eqnarray*}
and if $p\in\mathbb N\backslash\delta(k,X)$
\begin{eqnarray*}
w(\alpha):=H^jR\alpha(X)(\alpha\otimes 1)\in
H^j_{DR}(X_{\hat k_{\sigma_p}})\otimes_{\hat k_{\sigma_p}}\mathbb B_{st,\hat k_{\sigma_p}}.
\end{eqnarray*}
the associated de Rham class by the $p$ adic periods. We recall
\begin{eqnarray*}
H^jR\alpha(X):
H^j_{et}(X_{\mathbb C_p},\mathbb Z_{p,X^{et}})\otimes\mathbb B_{dr,\mathbb C_p}\xrightarrow{\sim}
H^j_{DR}(X_{\hat k_{\sigma_p}})\otimes_{\hat k_{\sigma_p}}\mathbb B_{dr,\mathbb C_p}
\end{eqnarray*}
is the canonical filtered isomorphism of $\hat G_{\sigma_p}$-modules, and
\begin{eqnarray*}
H^jR\alpha(X):H_{et}^j(X_{\mathbb C_p},\mathbb Z_p)\otimes_{\mathbb Z_p}\mathbb B_{st,\hat k_{\sigma_p}}
\xrightarrow{\sim}H^j_{DR}(X_{\hat k_{\sigma_p}})\otimes_{\hat k_{\sigma_p}}\mathbb B_{st,\hat k_{\sigma_p}}
\end{eqnarray*}
is the canonical filtered isomorphism compatible with the action of $\hat G_{\sigma_p}$, 
of the Frobenius $\phi_p$ and the monodromy $N$.
\end{defi}

We recall the following result from Illusie:

\begin{prop}\label{Ilprop}
Let $k$ a field of finite type over $\mathbb Q$. Let $X\in\SmVar(k)$.  
Let $p\in\mathbb N\backslash\delta(k,X)$ a prime number. Consider an embedding $\sigma_p:k\hookrightarrow\mathbb C_p$.
Denote $\hat k_{\sigma_p}\subset\mathbb C_p$ the $p$-adic completion of $k$ with respect to $\sigma_p$.
Consider $X_{\hat k_{\sigma_p}}^{\mathcal O}\in\Sch^{int}/O_{\hat k_{\sigma_p}}$ a smooth model of $X_{\hat k_{\sigma_p}}$,
in particular $X_{\hat k_{\sigma_p}}^{\mathcal O}\otimes_{O_{\hat k_{\sigma_p}}}\hat k_{\sigma_p}=X_{\hat k_{\sigma_p}}$
and $X_{\hat k_{\sigma_p}}^{\mathcal O}$ is smooth with smooth special fiber.
Assume there exist lifts 
$\phi_n:X^{\mathcal O}_{\hat k_{\sigma_p}}/p^n\to X^{\mathcal O}_{\hat k_{\sigma_p}}/p^n$ 
of the Frobenius $\phi:X^{\mathcal O}_{\hat k_{\sigma_p}}/p\to X^{\mathcal O}_{\hat k_{\sigma_p}}/p$,
such that for $n'>n$ the following diagram commutes
\begin{equation*}
\xymatrix{0\ar[r] & O_{X_{\hat k_{\sigma_p}}^{\mathcal O}/p^{n'-n}}\ar[r]^{p^n\cdot} & 
O_{X_{\hat k_{\sigma_p}}^{\mathcal O}/p^{n'}}\ar[r]^{/p^{n'-n}} &
O_{X_{\hat k_{\sigma_p}}^{\mathcal O}/p^n}\ar[r] & 0 \\
0\ar[r] & O_{X_{\hat k_{\sigma_p}}^{\mathcal O}/p^{n'-n}}\ar[r]^{p^n\cdot}\ar[u]^{\phi_{n'-n}} & 
O_{X_{\hat k_{\sigma_p}}^{\mathcal O}/p^{n'}}\ar[r]^{/p^{n'-n}}\ar[u]^{\phi_{n'}} &
O_{X_{\hat k_{\sigma_p}}^{\mathcal O}/p^n}\ar[u]^{\phi_n}\ar[r] & 0}
\end{equation*}
Let $l\in\mathbb Z$. For each $n\in\mathbb N$, the sequence in $C(X^{\mathcal O,et}_{\hat k_{\sigma_p}})$
\begin{eqnarray*}
0\to \Omega^{\bullet\geq l}_{X^{\mathcal O}_{\hat k_{\sigma_p}}/p^n,\log} 
\xrightarrow{OL_{X^{\mathcal O}/p^n}}
\Omega^{\bullet\geq l}_{X^{\mathcal O}_{\hat k_{\sigma_p}}/p^n}
\xrightarrow{\phi_n-I}
\Omega^{\bullet\geq l}_{X^{\mathcal O}_{\hat k_{\sigma_p}}/p^n}\to 0
\end{eqnarray*}
is exact as a sequence of etale sheaves (i.e. we only have local surjectivity on the right).
\end{prop}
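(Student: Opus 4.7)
The plan is to argue étale-locally on $X^{\mathcal{O}}_{\hat k_{\sigma_p}}$. Since the assertion is about exactness of a sequence of étale sheaves, we may assume $X^{\mathcal{O}}_{\hat k_{\sigma_p}} = \Spec A^{\mathcal{O}}$ is affine and equipped with an étale map from the $p$-adic completion of $O_{\hat k_{\sigma_p}}[t_1,\dots,t_d]$, so that $\Omega^1_{A^{\mathcal{O}}}$ is free on $dt_1,\dots,dt_d$; after possibly shrinking we may further assume $\phi_n(t_i) \equiv t_i^p \pmod{p}$. The notation ``$\phi_n$'' acting on $\Omega^{\bullet \geq l}_{X^{\mathcal{O}}/p^n}$ is to be read as the divided Frobenius $\phi_n^*/p^{\bullet}$: on a smooth scheme any Frobenius lift satisfies $\phi_n^*(\Omega^k) \subset p^k \Omega^k$, so $\phi_n^*/p^l$ is well-defined on the complex truncated in degree $\geq l$ and commutes with the de Rham differential.

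Injectivity of $OL_{X^{\mathcal{O}}/p^n}$ is immediate from the definition of $\Omega^{\bullet}_{\log}$ as a subsheaf. The identification $\im(OL_{X^{\mathcal{O}}/p^n}) = \ker(\phi_n - I)$ and the étale-local surjectivity of $\phi_n - I$ are both proved by induction on $n$, using the short exact sequence
\begin{equation*}
0 \to \Omega^{\bullet \geq l}_{X^{\mathcal{O}}/p} \xrightarrow{p^n \cdot} \Omega^{\bullet \geq l}_{X^{\mathcal{O}}/p^{n+1}} \xrightarrow{/p^n} \Omega^{\bullet \geq l}_{X^{\mathcal{O}}/p^n} \to 0,
\end{equation*}
the compatibility square for $\phi_{n+1}$ and $\phi_n$ that is the very hypothesis of the statement, and the snake lemma. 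This reduces the problem to the base case $n = 1$, i.e.\ the analogous statement in characteristic $p$ for the smooth special fiber.

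The base case $n = 1$ is the classical theorem of Cartier--Bloch--Kato--Illusie relating logarithmic differentials to fixed points of the divided Frobenius. For $l = 0$, $\phi - I$ on $O_{X^{\mathcal{O}}/p}$ is the Artin--Schreier operator $f \mapsto f^p - f$, whose kernel is $\mathbb{F}_p = \Omega^0_{\log}$ and whose cokernel vanishes étale-locally because $T^p - T = y$ defines an étale cover. For $l \geq 1$ one invokes the Cartier isomorphism $C^{-1} : \Omega^l_{X^{\mathcal{O}}/p} \xrightarrow{\sim} \mathcal{H}^l(\Omega^{\bullet}_{X^{\mathcal{O}}/p})$, under which $\phi^*/p^l$ modulo exact forms corresponds to $C^{-1}$; the kernel of $C^{-1} - I$ is then $\Omega^l_{\log}$ by a theorem of Bloch--Kato--Gabber, and the étale-local surjectivity reduces, after writing a form in the coordinates $t_i$, to applying the Artin--Schreier case componentwise to the coefficients. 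The main obstacle is the careful bookkeeping in the inductive step for surjectivity: one has to simultaneously lift a solution found at level $n$ to level $n+1$ (using the given compatibility of the $\phi_n$) and kill the resulting obstruction class by a further étale localization, and it is exactly this interplay that makes the compatibility hypothesis on the lifts of Frobenius indispensable.
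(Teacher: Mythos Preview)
Your proposal is correct and follows exactly the strategy of the paper: the paper's proof reads in full ``It follows from \cite{Illusie} for $n=1$. It then follows for $n\geq 2$ by induction on $n$ by a trivial devissage.'' You have simply unpacked both steps, correctly identifying the base case as the Cartier/Bloch--Kato--Gabber theory and the inductive step as the snake-lemma d\'evissage along the $p^n$-filtration, and you also make explicit the divided-Frobenius normalization that the paper leaves implicit in the notation $\phi_n$.
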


\begin{proof}
It follows from \cite{Illusie} for $n=1$. 
It then follows for $n\geq 2$ by induction on $n$ by a trivial devissage.
\end{proof}

We have the following key proposition :

\begin{prop}\label{LatticeLog}
Let $k$ a field of finite type over $\mathbb Q$. Let $X\in\SmVar(k)$. 
Let $p\in\mathbb N\backslash\delta(k,X)$ a prime number. Consider an embedding $\sigma_p:k\hookrightarrow\mathbb C_p$.
Denote $k\subset\hat k_{\sigma_p}\subset\mathbb C_p$ the $p$-adic completion of $k$ with respect to $\sigma_p$.
Consider an integral model $(\bar X,D)_{\hat k_{\sigma_p}}^{\mathcal O})$ an integral model of a compactification
$\bar X\in\PSmVar(k)$ of $X$ with $D:=\bar X\backslash X\subset\bar X$ a normal crossing divisor, i.e. 
\begin{itemize}
\item $\bar X_{\hat k_{\sigma_p}}^{\mathcal O}\in\PSch/O_K$ and $(\bar X,D)_{\hat k_{\sigma_p}}^{\mathcal O}$ is log smooth pair
(in particular $\bar X^{\mathcal O}$ is smooth with smooth special fiber and 
$D^{\mathcal O}\subset X^{\mathcal O}$ is a normal crossing divisor), 
\item $(\bar X,D)_{\hat k_{\sigma_p}}^{\mathcal O}\otimes_{O_{\hat k_{\sigma_p}}}\hat k_{\sigma_p}=(\bar X,D)_{\hat k_{\sigma_p}}$.
\end{itemize}
so that $X_{\hat k_{\sigma_p}}^{\mathcal O}\in\Sch^{int}/O_{\hat k_{\sigma_p}}$ an integral model of $X_{\hat k_{\sigma_p}}$.
\begin{itemize}
\item[(i)]Let $j,l\in\mathbb Z$. We have, see definition \ref{wlogdef}(iii), 
\begin{eqnarray*}
F^lH^j_{DR}(X_{\hat k_{\sigma_p}})\cap H^jR\alpha(X)(H^j_{et}(X_{\mathbb C_p},\mathbb Z_p))
=(c^*)^{-1}(H^jOL_{\hat X_{\hat k_{\sigma_p}}^{(p)}}
(\mathbb H^j_{pet}(X,\Omega^{\bullet\geq l}_{\hat X^{(p)}_{\hat k_{\sigma_p}},\log,\mathcal O}))) \\
\subset H^j_{DR}(X_{\hat k_{\sigma_p}})\otimes_{\hat k_{\sigma_p}}\mathbb B_{st,\hat k_{\sigma_p}},
\end{eqnarray*}
where we recall $c:\hat{\bar X}_{\hat k_{\sigma_p}}^{(p)}\to\bar X_{\hat k_{\sigma_p}}$ 
is the completion with respect to $(p)$, and 
\begin{equation*}
c^*:\mathbb H^j_{pet}(X_{\hat k_{\sigma_p}},\Omega_{X_{\hat k_{\sigma_p}}}^{\bullet})\xrightarrow{\sim}
\mathbb H^j_{pet}(X_{\hat k_{\sigma_p}},\Omega^{\bullet}_{\hat X^{(p)}_{\hat k_{\sigma_p}}}) 
\end{equation*}
is an isomorphism by GAGA and by considering an open cover $X_{\hat k_{\sigma_p}}=\cup_iX_i$
such that we have etale maps $X_i\to T^{d_X}\subset\mathbb A^{d_X}$. 
\item[(ii)]Let $j,l\in\mathbb Z$. We have, see definition \ref{wlogdef}(iii),
\begin{eqnarray*}
F^lH^j_{DR}(X_{\hat k_{\sigma_p}})\cap H^jR\alpha(X)(H^j_{et}(X_{\mathbb C_p},\mathbb Z_p))
=H^j(m\circ(OL_X\otimes I))
(\mathbb H^j_{pet}(X,\Omega^{\bullet\geq l}_{X_{\hat k_{\sigma_p}},\log,\mathcal O}\otimes\mathbb Z_p)) \\
\subset H^j_{DR}(X_{\hat k_{\sigma_p}})\otimes_{\hat k_{\sigma_p}}\mathbb B_{st,\hat k_{\sigma_p}},
\end{eqnarray*} 
\item[(ii)'] For $\alpha\in H^j_{et}(X_{\mathbb C_p},\mathbb Z_p)$ 
such that $w(\alpha)\in F^lH^j_{DR}(X_{\hat k_{\sigma_p}})$ (see definition \ref{walpha}), there exist 
\begin{equation*}
(\lambda_i)_{1\leq i\leq n}\in\mathbb Z_p \; \mbox{and} \;
(w_{Li})_{1\leq i\leq n}\in\mathbb H_{pet}^j(X_{\mathbb C_p},\Omega^{\bullet\geq l}_{X_{\mathbb C_p},\log,\mathcal O})
\end{equation*}
such that 
\begin{equation*}
w(\alpha)=\sum_{1\leq i\leq n}\lambda_i\cdot w_{Li}\in
\mathbb H_{pet}^j(X_{\mathbb C_p},\Omega^{\bullet\geq l}_{X_{\mathbb C_p}})=F^lH^j_{DR}(X_{\mathbb C_p}),
\; w_{Li}:=H^jOL_X(w_{Li}).
\end{equation*}
\end{itemize}
\end{prop}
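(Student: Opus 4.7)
The plan is to prove (i) by combining the $p$-adic (log) Hodge comparison $H^jR\alpha(X)$ with Illusie's characterization of integral logarithmic forms (Proposition \ref{Ilprop}), then deduce (ii) from (i) via the rigid GAGA for logarithmic classes (Proposition \ref{GAGAlog}), and finally deduce (ii)$'$ by unwinding the image of the map $H^j(m\circ(OL_X\otimes I))$ on a finite level.

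\smallskip

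For (i): Fix $\alpha\in H^j_{et}(X_{\mathbb C_p},\mathbb Z_p)$ and let $w(\alpha)=H^jR\alpha(X)(\alpha\otimes 1)\in H^j_{DR}(X_{\hat k_{\sigma_p}})\otimes_{\hat k_{\sigma_p}}\mathbb B_{st,\hat k_{\sigma_p}}$ as in Definition \ref{walpha}. Since the comparison isomorphism is filtered, Frobenius-equivariant and monodromy-equivariant, the condition that $w(\alpha)$ lies in $F^lH^j_{DR}(X_{\hat k_{\sigma_p}})\subset H^j_{DR}(X_{\hat k_{\sigma_p}})\otimes\mathbb B_{st,\hat k_{\sigma_p}}$ (via $v\mapsto v\otimes 1$) forces the image of $\alpha\otimes 1$ on the de Rham side to live in the integral lattice $\mathbb H^j_{pet}(X_{\hat k_{\sigma_p}},\Omega^{\bullet\geq l}_{\hat X^{(p)}_{\hat k_{\sigma_p}}})$ modulo $p^n$ for every $n$, and, crucially, to be fixed by the Frobenius lift $\phi_n$ coming from the smooth integral model $X^{\mathcal O}_{\hat k_{\sigma_p}}$. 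The first step is therefore to reduce, via the short exact sequence $0\to O/p^{n'-n}\xrightarrow{p^n}O/p^{n'}\to O/p^n\to 0$ and the fact that the pro-étale topos is replete (so $R\varprojlim_n$ equals $\varprojlim_n$ on the systems involved), the claim to the statement modulo each $p^n$.

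\smallskip

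For the heart of the argument, I would apply Illusie's exact sequence (Proposition \ref{Ilprop})
\begin{equation*}
0\to\Omega^{\bullet\geq l}_{X^{\mathcal O}_{\hat k_{\sigma_p}}/p^n,\log}\xrightarrow{OL}\Omega^{\bullet\geq l}_{X^{\mathcal O}_{\hat k_{\sigma_p}}/p^n}\xrightarrow{\phi_n-I}\Omega^{\bullet\geq l}_{X^{\mathcal O}_{\hat k_{\sigma_p}}/p^n}\to 0
\end{equation*}
and take pro-étale hypercohomology. The resulting long exact sequence
\begin{equation*}
\cdots\to\mathbb H^{j-1}(\Omega^{\bullet\geq l}_{/p^n})\xrightarrow{\phi_n-I}\mathbb H^{j-1}(\Omega^{\bullet\geq l}_{/p^n})\to\mathbb H^j(\Omega^{\bullet\geq l}_{/p^n,\log})\to\mathbb H^j(\Omega^{\bullet\geq l}_{/p^n})\xrightarrow{\phi_n-I}\mathbb H^j(\Omega^{\bullet\geq l}_{/p^n})\to\cdots
\end{equation*}
identifies the image of $H^jOL$ with the $\phi_n$-fixed part of $\mathbb H^j_{pet}(X_{\hat k_{\sigma_p}},\Omega^{\bullet\geq l}_{X^{\mathcal O}_{\hat k_{\sigma_p}}/p^n})$. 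On the étale side, Galois-invariant classes correspond under the Frobenius-equivariant semi-stable comparison $H^jR\alpha(X)$ to $\phi$-fixed classes sitting in the integral part of $F^lH^j_{DR}(X_{\hat k_{\sigma_p}})\otimes\mathbb B_{st,\hat k_{\sigma_p}}$, because an element of the form $\alpha\otimes 1$ is preserved by $\phi\otimes\phi$ and the Frobenius on $\mathbb B_{st}$ acts trivially on the intersection with the unfiltered de Rham lattice tensor $1$. Passing to the inverse limit over $n$, we obtain that $w(\alpha)\in F^lH^j_{DR}(X_{\hat k_{\sigma_p}})$ lies in $H^jOL_{\hat X^{(p)}_{\hat k_{\sigma_p}}}(\mathbb H^j_{pet}(X,\Omega^{\bullet\geq l}_{\hat X^{(p)}_{\hat k_{\sigma_p}},\log,\mathcal O}))$; conversely every such logarithmic class is Frobenius fixed and gives via the comparison a Galois-invariant étale class. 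Applying $(c^*)^{-1}$, which is an isomorphism by GAGA (for the compactification $\bar X$) together with the étale local description of $X$ over $T^{d_X}$, yields (i).

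\smallskip

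For (ii), the formal/algebraic transfer is exactly the content of Proposition \ref{GAGAlog}: the isomorphism $c^*$ and its inverse preserve logarithmic classes, so $(c^*)^{-1}(H^jOL_{\hat X^{(p)}}(\cdots))$ coincides with $H^j(m\circ(OL_X\otimes I))(\mathbb H^j_{pet}(X,\Omega^{\bullet\geq l}_{X_{\hat k_{\sigma_p}},\log,\mathcal O}\otimes\mathbb Z_p))$. Part (ii)$'$ is then a bookkeeping statement: by construction of the presheaf $\Omega^{\bullet}_{X^{pet},\log,\mathcal O}\otimes\underline{\mathbb Z_p}_X$, any element in the image of $H^j(m\circ(OL_X\otimes I))$ is, on a sufficiently fine pro-étale cover, a finite sum $\sum_i\lambda_i\cdot w_{L,i}$ with $\lambda_i\in\mathbb Z_p$ and $w_{L,i}$ logarithmic; taking cohomology classes gives the decomposition of $w(\alpha)$ in $F^lH^j_{DR}(X_{\mathbb C_p})$ stated in (ii)$'$. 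The main obstacle is step of the first paragraph of the heart: correctly identifying, through the filtered Frobenius-equivariant comparison with $\mathbb B_{st}$, the condition ``$w(\alpha)$ lies in $F^lH^j_{DR}(X_{\hat k_{\sigma_p}})\otimes 1$'' with the condition ``$w(\alpha)$ is represented integrally and Frobenius-fixed modulo $p^n$'', since one has to handle the interaction between the Hodge filtration, the integral lattice, and the Frobenius eigenvalue $p^l$ on $\Omega^l_{\log}$ in order to match Illusie's $\phi_n - I$ exactly with the operator coming from the étale Galois action.
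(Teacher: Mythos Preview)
Your strategy matches the paper's: for (i) combine the filtered, Frobenius-compatible $p$-adic comparison with Illusie's exact sequence (Proposition \ref{Ilprop}) to identify the lattice $F^lH^j_{DR}\cap H^jR\alpha(X)(H^j_{et})$ with the image of logarithmic classes; for (ii) apply Proposition \ref{GAGAlog}; for (ii)$'$ unwind on a pro-\'etale cover. So the architecture is right.

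The gap is precisely where you flag the ``main obstacle'', and it is not only a matter of matching Frobenius eigenvalues. Proposition \ref{Ilprop} requires a \emph{global} compatible system of Frobenius lifts $\phi_n$ on $X^{\mathcal O}/p^n$, and such lifts exist only locally, on pieces $X_i$ admitting an \'etale map to $\mathbb A^{d_X}$. So you cannot simply take pro-\'etale hypercohomology of Illusie's sequence on all of $X$. The paper handles this by choosing such a cover $(r_i:X_i\to X)$, using the explicit local lifts $\phi^I_n$ on each $X_I$ (from \cite{ChinoisCrys}), and then comparing them to the \emph{global} Frobenius $\phi$ on $H^j_{DR}(X^{\mathcal O}_{\hat k_{\sigma_p}})$ furnished by the de Rham--Witt complex \cite{Illusie}. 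These fit into a commutative diagram whose rows are the Mayer--Vietoris/\v Cech sequences for the cover and whose columns are, on each $X_I$, the long exact sequence of Illusie's short exact sequence; exactness of the columns then gives the identification $\ker(I-\phi)=\mathrm{im}(H^jOL)$ on $X$ itself. The step you describe as ``Galois-invariant $\Leftrightarrow$ $\phi$-fixed in $F^l$'' is exactly the statement the paper imports from \cite{ChinoisCrys}: $w(\alpha)\in F^lH^j_{DR}(X_{\hat k_{\sigma_p}})$ iff $w(\alpha)\in\ker(I-\phi)$ on $F^lH^j_{DR}(X^{\mathcal O}_{\hat k_{\sigma_p}})$. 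With this local-to-global patching and the de Rham--Witt Frobenius in place, your outline becomes the paper's proof.

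For (ii)$'$ your ``bookkeeping'' is a bit too quick: one must argue that on a $w$-contractile cover the coefficients $\lambda_{iJ}\in\mathbb Z_p$ appearing in the \v Cech representative are actually independent of the index $J$. The paper does this by noting that $w(\alpha)$ is a torsion-free \v Cech cocycle on a connected smooth projective variety, forcing $\lambda_{iJ}=\lambda_i$ for all $J$.
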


\begin{proof} 
\noindent(i): Consider 
$c:(\hat\bar X^{\mathcal O,(p)}_{\hat k_{\sigma_p}},\hat D^{\mathcal O,(p)}_{\hat k_{\sigma_p}})\to 
(\bar X_{\hat k_{\sigma_p}}^{\mathcal O},D^{\mathcal O}_{\hat k_{\sigma_p}})$ 
the morphism in $\RTop$ which is the formal completion along the ideal $(p)$.
Take a Zariski or etale cover $r=(r_i:X_i\to X)_{1\leq i\leq r}$ such that for each $i$ there exists an etale map
$X_i\to\mathbb A^{d_{X_i}}$. Then, by \cite{ChinoisCrys}, we have for each $i$ explicit lifts of Frobenius
$\phi^i_n:X^{\mathcal O}_{i,\hat k_{\sigma_p}}/p^n\to X^{\mathcal O}_{i,\hat k_{\sigma_p}}/p^n$ 
of the Frobenius $\phi_1^i:X^{\mathcal O}_{i,\hat k_{\sigma_p}}/p\to X^{\mathcal O}_{i,\hat k_{\sigma_p}}/p$,
such that for $n'>n$ the following diagram commutes
\begin{equation*}
\xymatrix{0\ar[r] & O_{X_{i,\hat k_{\sigma_p}}^{\mathcal O}/p^{n'-n}}\ar[r]^{p^n\cdot} & 
O_{X_{i,\hat k_{\sigma_p}}^{\mathcal O}/p^{n'}}\ar[r]^{/p^{n'-n}} &
O_{X_{i,\hat k_{\sigma_p}}^{\mathcal O}/p^n}\ar[r] & 0 \\
0\ar[r] & O_{X_{i,\hat k_{\sigma_p}}^{\mathcal O}/p^{n'-n}}\ar[r]^{p^n\cdot}\ar[u]^{\phi^i_{n'-n}} & 
O_{X_{i,\hat k_{\sigma_p}}^{\mathcal O}/p^{n'}}\ar[r]^{/p^{n'-n}}\ar[u]^{\phi^i_{n'}} &
O_{X_{i,\hat k_{\sigma_p}}^{\mathcal O}/p^n}\ar[u]^{\phi^i_n}\ar[r] & 0}
\end{equation*}
and such that the action of $\phi^i_n$ on $\Omega^{\bullet}_{X_{i,\hat k_{\sigma_p}}^{\mathcal O}/p^n}$ 
is a morphism of complex, i.e. commutes with the differentials. 
On the other hand, by \cite{Illusie}, we have action of the Frobenius on 
$H^j_{DR}(X_{\hat k_{\sigma_p}})=H^j_{DR}(X^{\mathcal O}_{\hat k_{\sigma_p}})\otimes_{O_{\hat k_{\sigma_p}}}\hat k_{\sigma_p}$
by
\begin{equation*}
\phi:H^j_{DR}(X^{\mathcal O}_{\hat k_{\sigma_p}})\xrightarrow{\sim}
\mathbb H^j(X_{\hat k_{\sigma_p}},W\Omega^{\bullet}_{X^{\mathcal O}_{\hat k_{\sigma_p}}})
\xrightarrow{\phi_{W(X_{\hat k_{\sigma_p}})}^*}
\mathbb H^j(X_{\hat k_{\sigma_p}},W\Omega^{\bullet}_{X^{\mathcal O}_{\hat k_{\sigma_p}}})
\xrightarrow{\sim}H^j_{DR}(X^{\mathcal O}_{\hat k_{\sigma_p}})
\end{equation*}
We then have the following commutative diagram, where $R:=[1,\ldots,r]$ and $X_R:=X_1\times_X\cdots\times_X X_r$,
\begin{equation}\label{Frob}
\xymatrix{\cdots\ar[r] & F^lH^{j-1}_{DR}(X^{\mathcal O}_{R,\hat k_{\sigma_p}})\ar[r]^{\partial} & 
F^lH^j_{DR}(X^{\mathcal O}_{\hat k_{\sigma_p}})\ar[r]^{r_i^*} &
\oplus_{i=1}^rF^lH^j_{DR}(X^{\mathcal O}_{i,\hat k_{\sigma_p}})\ar[r]^{r_I^*} & \cdots \\
\cdots\ar[r] & F^lH^{j-1}_{DR}(X^{\mathcal O}_{R,\hat k_{\sigma_p}})\ar[r]^{\partial}\ar[u]^{I-\phi^R} & 
F^lH^j_{DR}(X^{\mathcal O}_{\hat k_{\sigma_p}})\ar[r]^{r_i^*}\ar[u]^{I-\phi} &
\oplus_{i=1}^rF^lH^j_{DR}(X^{\mathcal O}_{i,\hat k_{\sigma_p}})\ar[r]^{r_I^*}\ar[u]^{I-\phi^i} & \cdots \\
\cdots\ar[r] & 
\mathbb H_{pet}^{j-1}(X_{R,\hat k_{\sigma_p}},\Omega^{\bullet\geq l}_{\hat X^{(p)}_{\hat k_{\sigma_p}},\log,\mathcal O})
\ar[r]^{\partial}\ar[u]^{H^{j-1}r^*OL_{\hat X^{\mathcal O,(p)}_{R,\hat k_{\sigma_p}}}} &
\mathbb H_{pet}^j(X_{\hat k_{\sigma_p}},\Omega^{\bullet\geq l}_{\hat X^{(p)}_{\hat k_{\sigma_p}},\log,\mathcal O})
\ar[r]^{r_i^*}\ar[u]^{H^jr^*OL_{\hat X^{\mathcal O,(p)}_{i,\hat k_{\sigma_p}}}} &
\oplus_{i=1}^r\mathbb H_{pet}^j(X_{i,\hat k_{\sigma_p}},
\Omega^{\bullet\geq l}_{\hat X^{(p)}_{\hat k_{\sigma_p}},\log,\mathcal O})
\ar[r]^{r_I^*}\ar[u]^{H^jr^*OL_{\hat X_{i,\hat k_{\sigma_p}}^{\mathcal O,(p)}}} & \cdots}
\end{equation}
whose rows are exact sequences.
By \cite{ChinoisCrys},
$\alpha\in H^j_{et}(X_{\mathbb C_p},\mathbb Z_p)$ is such that $w(\alpha)\in F^lH^j_{DR}(X_{\hat k_{\sigma_p}})$
if and only if  
\begin{equation*}
w(\alpha)\in\ker(I-\phi:F^lH^j_{DR}(X^{\mathcal O}_{\hat k_{\sigma_p}})\to F^lH^j_{DR}(X^{\mathcal O}_{\hat k_{\sigma_p}})).
\end{equation*}  
On the other hand, for each $I\subset[1,\ldots,r]$, the sequence in $C(X_I^{\mathcal O,pet})$
\begin{eqnarray*}
0\to\Omega^{\bullet\geq l}_{\hat X^{(p)}_{I,\hat k_{\sigma_p}},\log,\mathcal O}
\xrightarrow{OL_{\hat X_{\hat k_{\sigma_p}}^{\mathcal O,(p)}}}
\Omega^{\bullet\geq l}_{\hat X^{\mathcal O,(p)}_{I,\hat k_{\sigma_p}}}(\log D^{\mathcal O}_{I,\hat k_{\sigma_p}}) 
\xrightarrow{\phi^I-I:=(\phi^I_n-I)_{n\in\mathbb N}}
\Omega^{\bullet\geq l}_{\hat X^{\mathcal O,(p)}_{I,\hat k_{\sigma_p}}}(\log D^{\mathcal O}_{I,\hat k_{\sigma_p}})\to 0
\end{eqnarray*}
is exact for the pro-etale topology by proposition \ref{Ilprop} 
and since for each $l,n\in\mathbb N$ the maps in $\PSh(X^{pet})$
\begin{equation*}
\Omega(/p^{n'}):\Omega^l_{X^{\mathcal O}_{\hat k_{\sigma_p}}/p^{n-n'}}\to\Omega^l_{X^{\mathcal O}_{\hat k_{\sigma_p}}/p^n} 
\end{equation*}
are surjective for the etale topology and since the pro-etale site is a replete topos (\cite{BSch}). 
Hence, for each $I\subset[1,\ldots,r]$, by applying $r^*$ where $r:X_I^{pet}\to X_I^{\mathcal O,pet}$,
the sequence in $C(X_I^{pet})$
\begin{eqnarray*}
0\to r^*\Omega^{\bullet\geq l}_{\hat X^{(p)}_{I,\hat k_{\sigma_p}},\log,\mathcal O}
\xrightarrow{r^*OL_{\hat X_{\hat k_{\sigma_p}}^{\mathcal O,(p)}}}
r^*\Omega^{\bullet\geq l}_{\hat X^{\mathcal O,(p)}_{I,\hat k_{\sigma_p}}}(\log D^{\mathcal O}_{I,\hat k_{\sigma_p}}) 
\xrightarrow{\phi^I-I:=(\phi^I_n-I)_{n\in\mathbb N}}
r^*\Omega^{\bullet\geq l}_{\hat X^{\mathcal O,(p)}_{I,\hat k_{\sigma_p}}}(\log D^{\mathcal O}_{I,\hat k_{\sigma_p}})\to 0
\end{eqnarray*}
is exact for the pro-etale topology.
Hence the columns of the diagram (\ref{Frob}) are exact. This proves (i).

\noindent(ii): Follows from (i) and proposition \ref{GAGAlog}.

\noindent(ii)': By (ii), if $(r_i:X_i\to X)_{1\leq i\leq r}$ is a w-contractile affine pro-etale cover 
\begin{equation*}
w(\alpha)=(\sum_{1\leq i\leq n_J}\lambda_{iJ}\cdot w_{LiJ})_{J\subset[1,\ldots,r],card J=j}
\in H^j\Gamma(X_{\bullet,\hat k_{\sigma_p}},m\circ(OL_X\otimes I))(H^j\Gamma(X_{\bullet,\hat k_{\sigma_p}},
\Omega^{\bullet\geq l}_{X_{\hat k_{\sigma_p}},\log,\mathcal O}\otimes\mathbb Z_p)),
\end{equation*}
with $w_{LiJ}\in\Gamma(X_J,\Omega^{\bullet\geq l'}_{X_{\hat k_{\sigma_p}},\log,\mathcal O})$, $l'\geq l$
and $\lambda_{iJ}\in\mathbb Z_p$. Since $w(\alpha)$ is a Chech etale cycle (i.e. closed for the Chech differential)
without torsion and since the topology consists of etale covers of $X\in\PSmVar(k)$ (which we way assume connected), 
$n_J=n$ and $\lambda_{iJ}=\lambda_i\in\mathbb Z_p$ for each $J$, which gives
\begin{equation*}
w(\alpha)=\sum_{1\leq i\leq n}\lambda_i\cdot w_{Li}\in
\mathbb H_{pet}^j(X_{\mathbb C_p},\Omega^{\bullet\geq l}_{X_{\mathbb C_p}})=F^lH^j_{DR}(X_{\mathbb C_p}),
\; w_{Li}:=H^jOL_X(w_{Li}).
\end{equation*}
\end{proof}

\begin{prop}\label{keypropCp}
Let $k$ a field of finite type over $\mathbb Q$. Denote $\bar k$ the algebraic closure of $k$.
Denote $G:=Gal(\bar k/k)$ its absolute Galois group.
Let $X\in\SmVar(k)$ a smooth variety. 
Take a compactification $\bar X\in\PSmVar(k)$ of $X$ such that $D:=\bar X\backslash X\subset X$ is a normal crossing divisor,
and denote $j:X\hookrightarrow\bar X$ the open embedding.    
Let $p\in\mathbb N$ a prime number. Consider an embedding $\sigma_p:k\hookrightarrow\mathbb C_p$.
Denote $k\subset\hat k_{\sigma_p}\subset\mathbb C_p$ being the $p$-adic competion
with respect to the $p$ adic norm induced by $\sigma_p$. 
Then $\hat G_{\sigma_p}:=Gal(\mathbb C_p/\hat k_{\sigma_p})\subset G:=Gal(\bar k/k)$.
\begin{itemize}
\item[(i)]Let $\alpha\in H_{et}^j(X_{\mathbb C_p},\mathbb Z_p)$. 
Consider then its associated De Rham class (see definition \ref{walpha})
\begin{eqnarray*}
w(\alpha):=H^jR\alpha(X)(\alpha\otimes 1)\in H^j_{DR}(X_{\mathbb C_p})\otimes_{\mathbb C_p}\mathbb B_{dr,\mathbb C_p}.
\end{eqnarray*}
Then $\alpha\in H_{et}^j(X_{\mathbb C_p},\mathbb Z_p)(l)^{\hat G_{\sigma_p}}$ if and only if
$w(\alpha)\in F^lH^j_{DR}(X_{\hat k_{\sigma_p}})=F^lH^j_{DR}(X_{\mathbb C_p})\cap H^j_{DR}(X_{\hat k_{\sigma_p}})$.
That is we have 
\begin{equation*}
H_{et}^j(X_{\mathbb C_p},\mathbb Z_p)(l)^{\hat G_{\sigma_p}}\otimes_{\mathbb Z_p}\mathbb Q_p
=<F^lH^j_{DR}(X_{\hat k_{\sigma_p}})\cap H_{et}^j(X_{\mathbb C_p},\mathbb Z_p)>_{\mathbb Q_p}\subset
H^j_{et}(X_{\mathbb C_p},\mathbb Z_{p,X^{et}})\otimes\mathbb B_{dr,\mathbb C_p},
\end{equation*}
where $<->_{\mathbb Q_p}$ denote the $\mathbb Q_p$ vector space generated by $(-)$.
Note that $H_{et}^j(X_{\mathbb C_p},\mathbb Z_p)$ and $F^lH^j_{DR}(X_{\hat k_{\sigma_p}})$ 
are canonically embedded as subabelian groups of 
$H^j_{et}(X_{\mathbb C_p},\mathbb Z_{p,X^{et}})\otimes\mathbb B_{dr,\mathbb C_p}$ 
by $(-)\otimes 1$ and $\alpha(X)\circ((-)\otimes 1)$ respectively.
\item[(ii)] Let $\alpha\in H_{et}^j(X_{\bar k},\mathbb Z_p)(l)$. Consider, see (i),
\begin{equation*}
w(\alpha):=w(\pi_{\bar k/\mathbb C_p}(X)^*\alpha)\in H^j_{DR}(X_{\mathbb C_p})
\otimes_{\mathbb C_p}\mathbb B_{dr,\mathbb C_p}.
\end{equation*}
Then $\alpha\in H_{et}^j(X_{\bar k},\mathbb Z_p)(l)^G$ if and only if
$w(\alpha)\in F^lH^j_{DR}(X)=F^lH^j_{DR}(X_{\mathbb C_p})\cap H^j_{DR}(X)$.
That is we have 
\begin{equation*}
H_{et}^j(X_{\bar k},\mathbb Z_p)(l)^G\otimes_{\mathbb Z_p}\mathbb Q_p
=<F^lH^j_{DR}(X)\cap H_{et}^j(X_{\bar k},\mathbb Z_p)>_{\mathbb Q_p}\subset
H^j_{et}(X_{\mathbb C_p},\mathbb Z_{p,X^{et}})\otimes\mathbb B_{dr,\mathbb C_p},
\end{equation*}
where $<->_{\mathbb Q_p}$ denote the $\mathbb Q_p$ vector space generated by $(-)$. 
Note that $H_{et}^j(X_{\bar k},\mathbb Z_p)$ and $F^lH^j_{DR}(X)$ are canonically embedded as subabelian groups of 
$H^j_{et}(X_{\mathbb C_p},\mathbb Z_{p,X^{et}})\otimes\mathbb B_{dr,\mathbb C_p}$ 
by $(-)\otimes 1$ and $\alpha(X)\circ((-)\otimes 1)$ respectively.
\end{itemize}
\end{prop}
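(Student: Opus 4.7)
The plan is to deduce both parts from the filtered, $\hat G_{\sigma_p}$-equivariant de Rham comparison isomorphism $H^jR\alpha(X)$ recalled before definition \ref{walpha}, combined with Proposition \ref{LatticeLog}(ii)' and Kummer theory.

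For the forward direction of (i), the canonical embedding $\mathbb Z_p(l) \hookrightarrow F^l\mathbb B_{dr,\mathbb C_p}$ via Fontaine's element $t$ places $\alpha \otimes 1$ inside $(F^l(H^j_{et}(X_{\mathbb C_p},\mathbb Z_p) \otimes_{\mathbb Z_p} \mathbb B_{dr,\mathbb C_p}))^{\hat G_{\sigma_p}}$ whenever $\alpha \in H^j_{et}(X_{\mathbb C_p},\mathbb Z_p)(l)^{\hat G_{\sigma_p}}$. Transporting via the filtered comparison isomorphism yields $w(\alpha) \in (F^l(H^j_{DR}(X_{\hat k_{\sigma_p}}) \otimes_{\hat k_{\sigma_p}} \mathbb B_{dr,\mathbb C_p}))^{\hat G_{\sigma_p}} = F^lH^j_{DR}(X_{\hat k_{\sigma_p}})$, the last equality via Tate's theorem $\mathbb B_{dr,\mathbb C_p}^{\hat G_{\sigma_p}} = \hat k_{\sigma_p}$ together with $(F^m\mathbb B_{dr,\mathbb C_p})^{\hat G_{\sigma_p}} = 0$ for $m \geq 1$. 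For the reverse direction, if $w(\alpha) \in F^lH^j_{DR}(X_{\hat k_{\sigma_p}})$, Proposition \ref{LatticeLog}(ii)' decomposes $w(\alpha) = \sum_i \lambda_i w_{Li}$ with $w_{Li}$ logarithmic classes in $\mathbb H^j_{pet}(X_{\mathbb C_p}, \Omega^{\bullet\geq l}_{X_{\mathbb C_p},\log,\mathcal O})$. Each such logarithmic class lifts canonically to a $\hat G_{\sigma_p}$-invariant Tate-twisted integral étale class via the Kummer symbol map: a form $d\log f_1 \wedge \cdots \wedge d\log f_l$ corresponds to the cup product of Kummer symbols $\{f_1,\ldots,f_l\} \in H^l_{et}(\cdot,\mathbb Z_p(l))^{\hat G_{\sigma_p}}$, which is automatically Galois invariant. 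Matching the integral lattices via the Frobenius-eigenspace exact sequence of Proposition \ref{Ilprop}, the $\mathbb Q_p$-span of these lifts recovers $\alpha \otimes 1$, giving $\alpha \in H^j_{et}(l)^{\hat G_{\sigma_p}} \otimes_{\mathbb Z_p} \mathbb Q_p$.

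For (ii), I combine (i) with Galois descent from $\hat k_{\sigma_p}$ to $k$. If $\alpha \in H^j_{et}(X_{\bar k},\mathbb Z_p)(l)^G$, then a fortiori its pullback by $\pi_{\bar k/\mathbb C_p}(X)$ is $\hat G_{\sigma_p}$-invariant, so (i) gives $w(\alpha) \in F^lH^j_{DR}(X_{\hat k_{\sigma_p}}) = F^lH^j_{DR}(X) \otimes_k \hat k_{\sigma_p}$. Using Proposition \ref{LatticeLog}(ii)' again, the logarithmic decomposition of $w(\alpha)$ can be chosen over $\bar k$ since $\alpha$ is defined there; $G$-invariance of $\alpha$ then makes the decomposition $G$-stable, and Galois descent (via $H^j_{DR}(X_{\bar k})^G = H^j_{DR}(X)$ from faithfully flat descent for the $k$-vector space structure on de Rham cohomology) gives $w(\alpha) \in F^lH^j_{DR}(X)$. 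The reverse direction is parallel, starting from the $\mathbb Q_p$-generation statement of (i) applied to each embedding $\sigma_p$. The main obstacle I expect is the integral matching in the reverse direction of (i): identifying the $\mathbb Z_p$-lattice of Galois-invariant Tate-twisted étale classes with that of logarithmic de Rham classes requires a tight compatibility between Kummer's $\mu_{p^n}$-symbol map and Illusie's $d\log$ map modulo $p^n$ for all $n$, controlled by the Frobenius-eigenspace exact sequence of Proposition \ref{Ilprop}; passage from the rational ($\otimes\mathbb Q_p$) identification to the integral structure is where the technical work concentrates.
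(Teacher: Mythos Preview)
For (i) both directions follow in one line from $H^jR\alpha(X)$ being a filtered, $\hat G_{\sigma_p}$-equivariant isomorphism, which is exactly what the paper records; your Kummer-symbol detour for the converse is unnecessary. The genuine gap is in (ii). Your descent from $F^lH^j_{DR}(X_{\hat k_{\sigma_p}})$ to $F^lH^j_{DR}(X)$ via a logarithmic decomposition ``chosen over $\bar k$'' does not go through: Proposition~\ref{LatticeLog}(ii)' outputs coefficients $\lambda_i\in\mathbb Z_p$ and $d\log$-forms built from units on pro-\'etale covers of $X_{\mathbb C_p}$, and neither datum has any reason to descend to $\bar k$. Structurally, $H^jR\alpha(X)$ is only $\hat G_{\sigma_p}$-equivariant (the full $G$ does not act on $\mathbb B_{dr,\mathbb C_p}$), so a single prime only pins down $w(\alpha)$ inside $H^j_{DR}(X_{\hat k_{\sigma_p}})$; the identity $H^j_{DR}(X_{\bar k})^G=H^j_{DR}(X)$ you invoke is inapplicable because $w(\alpha)$ is not known to lie in $H^j_{DR}(X_{\bar k})$, and $\bar k$ and $\hat k_{\sigma_p}$ are incomparable subfields of $\mathbb C_p$.

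The paper's argument for (ii) is a genuinely different two-prime construction. Assuming by contradiction that $w(\alpha)\notin H^j_{DR}(X)$, one takes $k'/k$ of finite type with $w(\alpha)\in H^j_{DR}(X_{k'})$ and nontrivial $Aut(k'/k)$-orbit, then chooses an auxiliary prime $l$ and an embedding $\sigma'_l:k'\hookrightarrow\mathbb C_l$ at which this nontriviality survives. By relating the $p$-adic and $l$-adic period maps through the common de Rham class over $k'$ (via a pairing into the cohomology of $X_{\mathbb C_p\otimes_{k'}\mathbb C_l}$ and the injectivity of the associated pullbacks), one forces the image of $\alpha$ on the $l$-adic side to fail $\hat G_{\sigma'_l}$-invariance, contradicting full $G$-invariance. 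This passage through a second prime is the missing idea; once $w(\alpha)\in H^j_{DR}(X)$ is established, the filtration statement follows from (i) together with $F^lH^j_{DR}(X)=F^lH^j_{DR}(X_{\mathbb C_p})\cap H^j_{DR}(X)$.
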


\begin{proof}
\noindent(i):Follows immediately from the fact that $H^jR\alpha(X)$ 
is a filtered quasi-isomorphism compatible with the Galois action of $\hat G_{\sigma_p}$ by \cite{Chinois}.

\noindent(ii):Let $\alpha\in H^j_{et}(X_{\mathbb C_p},\mathbb Z_p)^G$
Take a basis $((\alpha_i)_{1\leq i\leq t},(\alpha_i)_{t+1\leq i\leq s})\in H^j_{et}(X_{\mathbb C_p},\mathbb Z_p)$ such
that for $1\leq i\leq t$, $\alpha_i\in H^j_{et}(X_{\mathbb C_p},\mathbb Z_p)^G$. We have
\begin{equation*}
w(\alpha)=\sum_{1\leq i\leq s}\lambda_{i,w(\alpha)}w(\alpha_i)\in H^j_{DR}(X_{\mathbb C_p})
\end{equation*} 
Assume by absurd that $w(\alpha)\notin H^j_{DR}(X)$. 
There exist a finite type extenion $k'/k$, $k'\subset\mathbb C_p$ (depending on $w(\alpha)$) 
such that $w(\alpha)\in H^j_{DR}(X_{k'})$. 
By assumption the orbit $Aut(k'/k)\cdot w(\alpha)\subset H_{DR}(X_{k'})$ of $w(\alpha)$ under $Aut(k'/k)$
is non trivial (i.e. contain more then one element). 
Then there exist a prime number $l$ and an embedding $\sigma'_l:k'\hookrightarrow\mathbb C_l$ 
such that the extension $\hat k'_{\sigma'_l}/\hat k_{\sigma_l}$ is non trivial 
(i.e. $\hat k'_{\sigma'_l}\neq\hat k_{\sigma_l}$) where $\sigma_l=\sigma'_{l|k}$
and such that $w(\alpha):=\pi_{k'/\hat k'_{\sigma'_l}}(X_{k'})^*w(\alpha)\notin H^j_{DR}(X_{\hat k_{\sigma_l}})$, 
where $\pi_{k'/\hat k'_{\sigma'_l}}(X_{k'}):X_{\hat k'_{\sigma'_l}}\to X_{k'}$ is the projection,
recall that $H^j_{DR}(X_{k'})=H^j_{DR}(X)\otimes_kk'$.
By injectivity of 
\begin{equation*}
\pi_{k'/\hat k'_{\sigma'_l}}(X_{k'})^*:H^j_{DR}(X_{k'})\to H^j_{DR}(X_{\hat k'_{\sigma'_l}}),
\end{equation*}
the orbit $Gal(\hat k'_{\sigma'_l}/\hat k_{\sigma_l})\cdot\lambda w(\alpha)\subset H_{DR}(X_{\hat k'_{\sigma'_l}})$ 
of $w(\alpha):=\pi_{k'/\hat k'_{\sigma'_l}}(X_{k'})^*w(\alpha)$ 
under $Gal(\hat k'_{\sigma'_l}/\hat k_{\sigma'_l})\subset Aut(k'/k)$ is non trivial (i.e. contain more then one element).
By the main result of \cite{Chinois} (see above), we get
\begin{equation*}
H^jR\alpha(X_{\mathbb C_l})^{-1}(\lambda w(\alpha))\notin 
(H^j_{et}(X_{\bar k'},\mathbb Z_l)\otimes_{\mathbb Z_l}\mathbb B_{dr,\mathbb C_l})^{\hat G_{\sigma'_l}}.
\end{equation*}
Consider the pairing of $G$ modules
\begin{equation*}
\delta(-,-):H^k_{pet}(X_{\mathbb C_p},\mathbb B_{dr,\mathbb C_p})\otimes_{k'}
H^l_{pet}(X_{\mathbb C_l},\mathbb B_{dr,\mathbb C_l})\to 
H^{k+l}_{pet}(X_{\mathbb C_p\otimes_{k'}\mathbb C_l},
\mathbb B_{dr,\mathbb C_p}\otimes_{k'}\mathbb B_{dr,\mathbb C_l})
\end{equation*}
and    
\begin{eqnarray*}
\alpha(X_{\mathbb C_p\otimes_{k'}\mathbb C_l}):
\pi_p^*\mathbb B_{dr,X_{\mathbb C_p}}\otimes_{k'}\pi_l^*\mathbb B_{dr,X_{\mathbb C_l}}
\xrightarrow{\alpha(X_{\mathbb C_p})\otimes\alpha(X_{\mathbb C_l})} \\ 
\pi_p^*(\Omega^{\bullet}_{X_{\mathbb C_p}}\otimes_{O_{X_{\mathbb C_p}}}O\mathbb B_{dr,X_{\mathbb C_p}})\otimes_{k'}
\pi_l^*(\Omega^{\bullet}_{X_{\mathbb C_l}}\otimes_{O_{X_{\mathbb C_l}}}O\mathbb B_{dr,X_{\mathbb C_l}}) 
\xrightarrow{w(\Omega)} \\
\Omega^{\bullet}_{X_{\mathbb C_p\otimes_{k'}\mathbb C_l}}
\otimes_{\pi_p^*O_{X_{\mathbb C_p}}\otimes_{k'}\pi_l^*O_{X_{\mathbb C_l}}}
O\mathbb B_{dr,X_{\mathbb C_p}}\otimes_{k'}O\mathbb B_{dr,X_{\mathbb C_l}}
=:DR(X)(O\mathbb B_{dr,X_{\mathbb C_p}}\otimes_{k'}O\mathbb B_{dr,X_{\mathbb C_l}})
\end{eqnarray*}
is the canonical map in 
$C_{\hat G_{\sigma'_p}\times\hat G_{\sigma'_l}}(X^{pet}_{\mathbb C_p\otimes_{k'}\mathbb C_l})$ and 
\begin{itemize}
\item $\pi_p:=\pi_{\mathbb C_p/\mathbb C_p\otimes_{k'}\mathbb C_l}(X_{\mathbb C_p}):
X_{\mathbb C_p\otimes_{k'}\mathbb C_l}\to X_{\mathbb C_p}$ 
\item $\pi_l:=\pi_{\mathbb C_l/\mathbb C_p\otimes_{k'}\mathbb C_l}(X_{\mathbb C_l}):
X_{\mathbb C_p\otimes_{k'}\mathbb C_l}\to X_{\mathbb C_l}$ 
\end{itemize}
are the base change maps. By the commutative diagram of $\hat G_{\sigma'_p}\times\hat G_{\sigma'_l}$ modules
\begin{equation*}
\xymatrix{H^j_{pet}(X_{\bar k'},\underline{\mathbb Z_p})\otimes_{\mathbb Z_p}\mathbb B_{dr,\mathbb C_p}
\ar[r]^{((-)\otimes 1)\circ\pi_{\bar k'/\mathbb C_p\otimes_{k'}\mathbb C_l}(X_{\bar k'})^*}
\ar[dd]_{H^jR\alpha(X_{\mathbb C_l})} & 
H_{pet}^j(X_{\mathbb C_p\otimes_{k'}\mathbb C_l},
\mathbb B_{dr,X_{\mathbb C_p}}\otimes_{k'}\mathbb B_{dr,X_{\mathbb C_l}})
\ar[d]^{H^j\alpha(X_{\mathbb C_p\otimes_{k'}\mathbb C_l})} &
H^j_{pet}(X_{\bar k'},\underline{\mathbb Z_l})\otimes_{\mathbb Z_l}\mathbb B_{dr,\mathbb C_l}
\ar[l]_{((-)\otimes 1)\circ\pi_{\bar k'/\mathbb C_p\otimes_{k'}\mathbb C_l}(X_{\bar k'})^*}
\ar[dd]^{H^jR\alpha(X_{\mathbb C_l})} \\
\, & \mathbb H_{pet}^j(X_{\mathbb C_p\otimes_{k'}\mathbb C_l},
DR(X)(O\mathbb B_{dr,X_{\mathbb C_p}}\otimes_{k'}O\mathbb B_{dr,X_{\mathbb C_l}})) & \, \\
H^j_{DR}(X_{\mathbb C_p})\otimes_{\mathbb C_p}\mathbb B_{dr,\mathbb C_p}
\ar[ru]^{((-)\otimes 1)\circ\pi_{\hat k'_{\sigma_p}/\mathbb C_p}(X_{k'})^*} & 
H^j_{DR}(X_{k'})\ar[l]^{\pi_{k'/\mathbb C_p}(X_{k'})^*}
\ar[u]^{((-)\otimes 1)\circ\pi_{k'/(\mathbb C_p\otimes_{k'}\mathbb C_l)}(X_{k'})^*}
\ar[r]^{((-)\otimes 1)\circ\pi_{k'/\mathbb C_l}(X_{k'})^*} &
H^j_{DR}(X_{\mathbb C_l})\otimes_{\mathbb C_l}\mathbb B_{dr,\mathbb C_l}
\ar[lu]_{((-)\otimes 1)\circ\pi_{\hat k'_{\sigma_l}/\mathbb C_l}(X_{k'})^*}},
\end{equation*}
we have for $\beta_p\in H^j_{pet}(X_{\bar k'},\mathbb Z_p)\otimes_{\mathbb Z_p}\mathbb B_{dr,\mathbb C_p}$
$\beta_l\in H^j_{pet}(X_{\bar k'},\mathbb Z_l)\otimes_{\mathbb Z_l}\mathbb B_{dr,\mathbb C_l}$,
\begin{equation*}
\alpha(X_{\mathbb C_p\otimes_{k'}\mathbb C_l})(\delta(\beta_p,\beta_l))=
\alpha(X_{\mathbb C_l})(\beta_p)\cdot\alpha(X_{\mathbb C_l})(\beta_l)
\in\mathbb H^j(X_{\mathbb C_p\otimes_{k'}\mathbb C_l},
DR(X)(O\mathbb B_{dr,\mathbb C_p}\otimes_{k'}O\mathbb B_{dr,\mathbb C_l})).
\end{equation*}
Note that since 
$\pi_{\bar k'/(\mathbb C_p\otimes_{k'}\mathbb C_l)}(X_{\bar k'}):X_{\mathbb C_p\otimes_{k'}\mathbb C_l}\to X_{\bar k'}$
is flat $((-)\otimes 1)\circ\pi_{\bar k'/(\mathbb C_p\otimes_{k'}\mathbb C_l)}(X_{\bar k'})^*$ and
$((-)\otimes 1)\circ\pi_{\bar k'/(\mathbb C_p\otimes_{k'}\mathbb C_l)}(X_{\bar k'})^*$ are injective,
(the morphism involved in the base change are without torsion).
Denote $d=\dim(X)$. Consider the canonical projection
\begin{eqnarray*}
\pi:X_{\mathbb C_p}\times X_{\mathbb C_l}\to X_{\mathbb C_p\otimes_{k'}\mathbb C_l}, \;
\mbox{given by on} \; X^o\subset X \; \mbox{open affine} \\
\pi((x_1,\cdots,x_d),(x'_1,\cdots,x'_d)):=(x_1\otimes x'_1,\cdots,x_d\otimes x'_d),
\end{eqnarray*}
where $X_{\mathbb C_p}\times X_{\mathbb C_l}$ is endowed with the product topology.
Then the commutative diagram
\begin{equation*}
\xymatrix{
H^j_{pet}(X_{\mathbb C_p\otimes_{k'}\mathbb C_l},\mathbb B_{dr,\mathbb C_p}\otimes_{k'}\mathbb B_{dr,\mathbb C_l})
\ar[r]^{\pi^*}\ar[d]^{\alpha(X_{\mathbb C_p\otimes_{k'}\mathbb C_l})} &
H^j_{pet}(X_{\mathbb C_p}\times X_{\mathbb C_l},\pi^*(\mathbb B_{dr,\mathbb C_p}\otimes_{k'}\mathbb B_{dr,\mathbb C_l}))
\ar[d]_{w(-)\circ(\pi^*\alpha(X_{\mathbb C_p})\otimes\pi^*\alpha(X_{\mathbb C_l}))} \\
\mathbb H_{pet}^j(X_{\mathbb C_p\otimes_{k'}\mathbb C_l},
DR(X)(O\mathbb B_{dr,X_{\mathbb C_p}}\otimes_{k'}O\mathbb B_{dr,X_{\mathbb C_l}}))\ar[r]^{\pi^*} &
\mathbb H_{pet}^j(X_{\mathbb C_p}\times X_{\mathbb C_l},
DR(X)(O\mathbb B_{dr,X_{\mathbb C_p}}\otimes_{k'}O\mathbb B_{dr,X_{\mathbb C_l}}))}
\end{equation*}
together with the $p$ adic Poincare lemma on $X_{\mathbb C_p}$ and the $l$ adic Poincare lemma on $X_{\mathbb C_l}$,
the fact that $\pi^*$ is injective 
(note that the product topology is less fine then the pro-etale topology on $X_{\mathbb C_p}\times_{k'}X_{\mathbb C_l}$
and that the map 
$O_{X_{\mathbb C_p\otimes_{k'}\mathbb C_l}}\to O_{X_{\mathbb C_p}\times_{k'}X_{\mathbb C_l}}$ is torsion free),
show that $\alpha(X_{\mathbb C_p\otimes_{k'}\mathbb C_l})$ is injective. Hence
\begin{equation*}
\alpha\otimes 1=\delta(\alpha,1)=\delta(1,H^jR\alpha(X_{\mathbb C_l})^{-1}(w(\alpha)))
\in H^j_{pet}(X_{\mathbb C_p\otimes_{k'}\mathbb C_l},
\mathbb B_{dr,\mathbb C_p}\otimes_{k'}\mathbb B_{dr,\mathbb C_l}),
\end{equation*} 
that is there exists $\lambda_{\alpha}\in k'$ such that
\begin{equation*}
\alpha=\lambda_{\alpha}H^jR\alpha(X_{\mathbb C_l})^{-1}(w(\alpha))
=H^jR\alpha(X_{\mathbb C_l})^{-1}(\lambda_{\alpha}w(\alpha))
\in H^j_{pet}(X_{\mathbb C_p\otimes_{k'}\mathbb C_l},
\mathbb B_{dr,\mathbb C_p}\otimes_{k'}\mathbb B_{dr,\mathbb C_l}).
\end{equation*} 
Since $\alpha$ and $w(\alpha)$ are $\hat G_{\sigma'_p}$ invariant, $\lambda_{\alpha}\in\mathbb Q_p$.
This gives
\begin{equation*}
\alpha=H^jR\alpha(X_{\mathbb C_l})^{-1}(\lambda_{\alpha}w(\alpha))
\notin H^j_{pet}(X_{\mathbb C_p\otimes_{k'}\mathbb C_l},
\mathbb Z_p\otimes\mathbb B_{dr,\mathbb C_l})^{\hat G_{\sigma'_l}}.
\end{equation*} 
Contradiction. We thus have $w(\alpha)\in H^j_{DR}(X)$. 
Conversely if $\alpha\notin H_{et}^j(X_{\bar k},\mathbb Z_p)(l)^G$, we get similarly $w(\alpha)\notin H^j_{DR}(X)$. 
The result then follows from (i) and the equality
$F^lH^j_{DR}(X)=F^lH^j_{DR}(X_{\mathbb C_p})\cap H^j_{DR}(X)$ 
given by the filtered isomorphism in $C_{fil}(\bar X_{\mathbb C_p})$
\begin{equation*}
w(k/\mathbb C_p):(\Omega^{\bullet}_{\bar X}(\log D),F_b)\otimes_k\mathbb C_p
\xrightarrow{\sim}(\Omega^{\bullet}_{\bar X_{\mathbb C_p}}(\log D_{\mathbb C_p}),F_b)
\end{equation*}
which say that the Hodge filtration is defined over $k$: see section 2.
\end{proof}

\begin{thm}\label{CevGd}
\begin{itemize}
\item[(i)]Let $k$ a field of finite type over $\mathbb Q$. 
Denote $\bar k$ its algebraic closure and $G=Gal(\bar k/k)$ its absolute Galois group.
Let $X\in\SmVar(k)$. Let $\sigma:k\hookrightarrow\mathbb C$ an embedding. 
Let $p\in\mathbb N\backslash\delta(k,X)$ a prime number. Let $\sigma_p:k\hookrightarrow\mathbb C_p$ an embedding. 
For each $j,l\in\mathbb Z$, we get from proposition \ref{keypropCp}(i), proposition \ref{LatticeLog},
and proposition \ref{keypropC}, a canonical injective map
\begin{equation*}
H^j\iota_{p,ev}^{G,l}(X):H_{et}^j(X_{\bar k},\mathbb Z_p)(l)^G
\hookrightarrow F^lH^j(X^{an}_{\mathbb C},2i\pi\mathbb Q)\otimes_{\mathbb Q}\mathbb Q_p, \;
\alpha\mapsto H^j\iota_{p,ev}^{G,d}(X)(\alpha):=ev(X)(w(\alpha)),
\end{equation*}
with $w(\alpha):=H^jR\alpha(X)(\alpha\otimes 1)\in H^j_{DR}(X)
\subset H^j_{DR}(X_{\mathbb C_p})\otimes_{\mathbb C_p}\mathbb B_{st,\mathbb C_p}$ (see definition \ref{walpha}), and
\begin{equation*}
F^lH^j(X_{\mathbb C}^{an},2i\pi\mathbb Q):=
H^j_{\sing}(X_{\mathbb C}^{an},2i\pi\mathbb Q)\cap F^lH^j_{DR}(X_{\mathbb C}^{an})
\subset H^j(X_{\mathbb C}^{an},\mathbb C).
\end{equation*}
By construction, for $f:X'\to X$ a morphism with $X,X'\in\SmVar(k)$ and $p\in\mathbb N\backslash\delta(k,X,X')$ a prime number,
we have the commutative diagram
\begin{equation*}
\xymatrix{H_{et}^j(X_{\bar k},\mathbb Z_p)(l)^G\ar[rr]^{H^j\iota_{p,ev}^{G,l}(X)}\ar[d]^{f^*} & \, &  
F^lH^j(X^{an}_{\mathbb C},2i\pi\mathbb Q)\otimes_{\mathbb Q}\mathbb Q_p\ar[d]^{f*} \\
H_{et}^j(X'_{\bar k},\mathbb Z_p)(l)^G\ar[rr]^{H^j\iota_{p,ev}^{G,l}(X')} & \, &
F^lH^j(X^{'an}_{\mathbb C},2i\pi\mathbb Q)\otimes_{\mathbb Q}\mathbb Q_p}.
\end{equation*}
\item[(ii)]Let $K\subset\mathbb C_p$ a $p$-adic field such that $Frac(W(O_K))=K$. 
Let $X\in\SmVar(K)$ such that its canonical model
$X^{\mathcal O}\in\Sch/O_K$ has good or semi-stable reduction modulo $p$.
Denote $G_p=Gal(\bar{\mathbb Q_p}/K)$ its absolute Galois group.
Let $\sigma:K\hookrightarrow\mathbb C$ an embedding.  
For each $j,l\in\mathbb Z$, we get from proposition \ref{keypropCp}(i), proposition \ref{LatticeLog},
and proposition \ref{keypropC}, a canonical injective map
\begin{equation*}
H^j\iota_{ev}^{G_p,l}(X):H_{et}^j(X_{\mathbb C_p},\mathbb Z_p)(l)^{G_p}
\hookrightarrow F^lH^j(X^{an}_{\mathbb C},2i\pi\mathbb Q)\otimes_{\mathbb Q}\mathbb Q_p, \;
\alpha\mapsto H^j\iota_{ev}^{G_p,l}(X)(\alpha):=ev(X)(w(\alpha)),
\end{equation*}
with $w(\alpha):=H^jR\alpha(X)(\alpha\otimes 1)\in H^j_{DR}(X)
\subset H^j_{DR}(X_{\mathbb C_p})\otimes_{\mathbb C_p}\mathbb B_{st,\mathbb C_p}$ (see definition \ref{walpha}), and 
\begin{equation*}
F^lH^j(X_{\mathbb C}^{an},2i\pi\mathbb Q):=
H^j_{\sing}(X_{\mathbb C}^{an},2i\pi\mathbb Q)\cap F^lH^j_{DR}(X_{\mathbb C}^{an})
\subset H^j(X_{\mathbb C}^{an},\mathbb C).
\end{equation*}
By construction, for $f:X'\to X$ a morphism with $X,X'\in\SmVar(K)$,we have the commutative diagram
\begin{equation*}
\xymatrix{H_{et}^j(X_{\mathbb C_p},\mathbb Z_p)(l)^{G_p}\ar[rr]^{H^j\iota_{ev}^{G_p,l}(X)}\ar[d]^{f^*} & \, &  
F^lH^j(X^{an}_{\mathbb C},2i\pi\mathbb Q)\otimes_{\mathbb Q}\mathbb Q_p\ar[d]^{f*} \\
H_{et}^j(X'_{\mathbb C_p},\mathbb Z_p)(l)^{G_p}\ar[rr]^{H^j\iota_{ev}^{G_p,l}(X')} & \, &
F^lH^j(X^{'an}_{\mathbb C},2i\pi\mathbb Q)\otimes_{\mathbb Q}\mathbb Q_p}.
\end{equation*}
Note that (ii) implies (i).
\end{itemize}
\end{thm}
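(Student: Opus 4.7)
The plan is to assemble the theorem from the three key propositions just established. Given $\alpha\in H_{et}^j(X_{\bar k},\mathbb Z_p)(l)^G$, I pull it back via $\pi_{\bar k/\mathbb C_p}(X_{\bar k})^*$ (an isomorphism on etale cohomology since $\bar k\hookrightarrow\mathbb C_p$) to a class in $H_{et}^j(X_{\mathbb C_p},\mathbb Z_p)(l)^{\hat G_{\sigma_p}}$ and form $w(\alpha):=H^jR\alpha(X)(\alpha\otimes 1)$ as in definition \ref{walpha}. Proposition \ref{keypropCp}(ii) then forces $w(\alpha)\in F^lH^j_{DR}(X)\subset F^lH^j_{DR}(X_{\hat k_{\sigma_p}})$, so the class, a priori in $H^j_{DR}(X_{\mathbb C_p})\otimes\mathbb B_{dr,\mathbb C_p}$, descends to the Hodge-filtered de Rham cohomology over $k$ itself. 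Proposition \ref{LatticeLog}(ii)$'$ next produces a decomposition
\begin{equation*}
w(\alpha)=\sum_{1\leq i\leq n}\lambda_i\,w_{Li},\quad \lambda_i\in\mathbb Z_p,\ w_{Li}\in H^jOL_X(\mathbb H_{pet}^j(X_{\mathbb C_p},\Omega^{\bullet\geq l}_{X_{\mathbb C_p},\log,\mathcal O})),
\end{equation*}
exhibiting each summand as a logarithmic class of Hodge level $\geq l$. Proposition \ref{keypropC}(ii) applied to each $w_{Li}$ yields $ev(X)(w_{Li})\in H^j_{\sing}(X^{an}_{\mathbb C},2i\pi\mathbb Q)$; the $\mathbb Z_p$-linear combination against the $\lambda_i$ therefore lands $ev(X)(w(\alpha))$ inside $H^j_{\sing}(X^{an}_{\mathbb C},2i\pi\mathbb Q)\otimes_{\mathbb Q}\mathbb Q_p$, and Hodge-filtered compatibility of $ev(X_{\mathbb C})$ keeps it inside the subspace $F^lH^j(X^{an}_{\mathbb C},2i\pi\mathbb Q)\otimes_{\mathbb Q}\mathbb Q_p$. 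This defines $H^j\iota_{p,ev}^{G,l}(X)(\alpha)$.

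Injectivity is then immediate: the map $\alpha\otimes 1\mapsto w(\alpha)$ is the restriction of the filtered $\mathbb B_{dr}$-linear isomorphism $H^jR\alpha(X)$, so it is injective on the $\mathbb Q_p$-linearization of $H_{et}^j(X_{\bar k},\mathbb Z_p)(l)^G$; and $H^jev(X):H^j_{DR}(X)\hookrightarrow H^j_{\sing}(X^{an}_{\mathbb C},\mathbb C)$ is injective (as recorded right after the construction of $ev$, via GAGA and the holomorphic Poincar\'e quasi-isomorphism $\alpha(X):\mathbb C_{X^{an}_{\mathbb C}}\hookrightarrow\Omega^{\bullet}_{X^{an}_{\mathbb C}}$). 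Functoriality in $f:X'\to X$ follows because both $R\alpha(-)$ and $ev(-)$ are natural transformations: $f^*w(\alpha)=w(f^*\alpha)$ by naturality of the $p$-adic comparison, and $ev(X')\circ f^*=f^*\circ ev(X)$ by naturality of integration, so their composition gives the required commutative square.

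Part (ii) is proved by exactly the same mechanism, using proposition \ref{keypropCp}(i) in place of (ii) since $K$ is already the relevant $p$-adic field and $\hat G_{\sigma_p}=G_p$. The observation that (ii) implies (i) is then obtained by taking $K:=\hat k_{\sigma_p}$ (which is unramified for $p\in\mathbb N\backslash\delta(k,X)$, so $Frac(W(O_K))=K$ is automatic), applying (ii) to $X_{\hat k_{\sigma_p}}$, and restricting along the inclusion $H_{et}^j(X_{\bar k},\mathbb Z_p)(l)^G\hookrightarrow H_{et}^j(X_{\mathbb C_p},\mathbb Z_p)(l)^{G_p}$ coming from $G_p\subset G$. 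The genuinely delicate point, and what I expect to be the main obstacle in writing the proof cleanly, is ensuring that $ev(X)(w(\alpha))$ lands in both $F^l$ and in the $2i\pi\mathbb Q$-subspace rather than merely in their sum; this is exactly what proposition \ref{LatticeLog}(ii)$'$ achieves by producing a logarithmic representative already in $\Omega^{\bullet\geq l}_{\log,\mathcal O}$, whose Hodge level is then preserved by proposition \ref{keypropC}.
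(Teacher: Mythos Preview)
Your proposal is correct and follows essentially the same route as the paper: invoke proposition \ref{keypropCp} to place $w(\alpha)$ in $F^lH^j_{DR}$, decompose via proposition \ref{LatticeLog}(ii)$'$ as $\sum\lambda_i w_{Li}$ with $\lambda_i\in\mathbb Z_p$ and $w_{Li}$ logarithmic of level $\geq l$, then apply proposition \ref{keypropC} to each $w_{Li}$. The only organizational difference is that the paper proves (ii) first and deduces (i) from it, whereas you argue (i) directly using proposition \ref{keypropCp}(ii) before noting the reduction; your added remarks on injectivity and functoriality make explicit what the paper leaves implicit.
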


\begin{proof}
\noindent(i):Follows from (ii).

\noindent(ii):By proposition \ref{keypropCp}(i), 
$w(\alpha)\in F^lH^j_{DR}(X_{\hat k_{\sigma_p}})
\subset H^j_{DR}(X_{\mathbb C_p})\otimes_{\mathbb C_p}\mathbb B_{st,\mathbb C_p}$.
By proposition \ref{LatticeLog},
\begin{eqnarray*}
w(\alpha)=\sum_{1\leq i\leq r}\lambda_iw_{Li}\in H^j_{DR}(X_{\mathbb C_p}), \;
(w_{Li})_{1\leq i\leq r}\in\mathbb H_{pet}^j(X_{\mathbb C_p},
\Omega^{\bullet\geq l}_{X_{\mathbb C_p},\log,\mathcal O}), \; (\lambda_i)_{1\leq i\leq r}\in\mathbb Z_p.
\end{eqnarray*}
Then, for each $1\leq i\leq r$,
\begin{itemize}
\item by proposition \ref{keypropC}, $ev(X)(w_{Li})\in H^j_{\sing}(X_{\mathbb C}^{an},2i\pi\mathbb Q)$,
\item $w_{Li}:=H^jOL_X(w_{Li})\in F^lH^j_{DR}(X_{\mathbb C_p})$,
\end{itemize}
that is, $w_{Li}\in F^lH^j(X_{\mathbb C}^{an},2i\pi\mathbb Q)$. Hence,
\begin{equation*}
ev(X)(w(\alpha))=\sum_{1\leq i\leq r}\lambda_iev(X)(w_{Li})\in 
F^lH^j(X_{\mathbb C}^{an},2i\pi\mathbb Q)\otimes_{\mathbb Q}\mathbb Q_p.
\end{equation*}
\end{proof}

\begin{rem}
Let $k$ a field of finite type over $\mathbb Q$. Let $X\in\SmVar(k)$.
Let $p\in\mathbb N$ be a prime number.
Let $\sigma_p:k\hookrightarrow\mathbb C_p$ and $\sigma:k\hookrightarrow\mathbb C$ be embeddings. 
Note that for $w\in\mathbb H^j(X,\Omega_{X^{et}}^{\bullet})$ such that 
$ev(X)(w)\in H^j_{\sing}(X_{\mathbb C}^{an},2i\pi\mathbb Q)$, 
$w$ is NOT logarithmic in general and $T(X)(ev(X)(w))\in H^j_{et}(X_{\bar k},\mathbb Q_p)$
is NOT $G=Gal(\bar k/k)$ equivariant in general, where 
\begin{equation*}
T(X):H^j_{sing}(X^{an}_{\mathbb C},\mathbb Q)\xrightarrow{/p^*}
H^j_{sing}(X^{an}_{\mathbb C},\mathbb Q_p)\xrightarrow{\sim} 
H^j_{et}(X^{an}_{\mathbb C},\mathbb Q_p)=H^j_{et}(X_{\bar k},\mathbb Q_p).
\end{equation*}
is given in section 2.
\end{rem}

%%%%%%%%%%%%%%%%%%%%%%%%%%%%%%%%%%%%%%%%%%%%%%%%%%%%%%%%%%%%%%%%%%%%%%%%%%%%%%%%%%%%
\section{The complex and etale Abel Jacobi maps and normal function}
%%%%%%%%%%%%%%%%%%%%%%%%%%%%%%%%%%%%%%%%%%%%%%%%%%%%%%%%%%%%%%%%%%%%%%%%%%%%%%%%%%%%

%=================================================================
\subsection{The complex Abel Jacobi map for higher Chow group and complex normal functions}
%================================================================

Let $k$ a field of finite type over $\mathbb Q$. 
Consider an embedding $\sigma:k\hookrightarrow\mathbb C$. 
Then $k\subset\bar k\subset\mathbb C$, where $\bar k$ is the algebraic closure of $k$.
We have then the quasi-isomorphism
$\alpha(X):\mathbb C_{X_{\mathbb C}^{an}}\hookrightarrow\Omega^{\bullet}_{X^{an}_{\mathbb C}}$
in $C(X_{\mathbb C}^{an})$.

\begin{itemize}
\item For $X\in\SmVar(k)$, we consider
\begin{equation*}
((H^j_{DR}(X),F),H^j(X_{\mathbb C}^{an},\mathbb Z),H^j\alpha(X))
\in MHM_{k,gm}(k)\subset\Vect_{fil}(k)\times_I\Ab
\end{equation*}
where $F$ is the Hodge filtration on $H^j_{DR}(X)\otimes_k\mathbb C$ and
\begin{equation*}
H^jR\Gamma(X_{\mathbb C}^{an},\alpha(X)):H^j(X_{\mathbb C}^{an},\mathbb C)\xrightarrow{\sim}H^j_{DR}(X)\otimes_k\mathbb C
\end{equation*} 
Recall the geometric mixed Hodge structures (see \cite{B5}) are mixed Hodge structure by the
Hodge decomposition theorem on smooth proper complex varieties.
\item For $f:X\to S$ a morphism with $S,X\in\SmVar(k)$, we consider
\begin{equation*}
(H^jRf_{*Hdg}(O_X,F_b),H^jRf_*\mathbb Z_{X_{\mathbb C}^{an}},H^jRf_*\alpha(X))
\in MHM_{k,gm}(S)\subset\PSh_{\mathcal D(1,0)fil}(S)\times_IP_{fil,k}(S_{\mathbb C}^{an})
\end{equation*}
where 
\begin{equation*}
H^jRf_*\alpha(X):H^jRf_*\mathbb C_{X_{\mathbb C}^{an}}\xrightarrow{\sim}
DR(S)(o_{fil}H^jRf_{*Hdg}(O_X,F_b)\otimes_k\mathbb C=H^j\int_fO_X\otimes_k\mathbb C)
\end{equation*} 
Recall the geometric mixed Hodge modules (see \cite{B5}) are mixed Hodge modules by 
a theorem of Saito for proper morphisms of smooth complex varieties.
\end{itemize}

Let $X\in\SmVar(k)$. We have for $j,d\in\mathbb N$, the generalized Jacobian
\begin{eqnarray*}
J_{\sigma}^{j,d}(X):=
H^j(X_{\mathbb C}^{an},\mathbb C)/(F^dH^j(X_{\mathbb C}^{an},\mathbb C)\oplus H^j(X_{\mathbb C}^{an},\mathbb Z))
\end{eqnarray*}
where $F$ is given the Hodge filtration on $H^j_{DR}(X)\otimes_k\mathbb C$ and
\begin{equation*}
H^jR\Gamma(X_{\mathbb C}^{an},\alpha(X)):H^j(X_{\mathbb C}^{an},\mathbb C)\xrightarrow{\sim}H^j_{DR}(X)\otimes_k\mathbb C.
\end{equation*}
If $X\in\PSmVar(k)$ and $2d\geq n$, $J^{j,d}(X)$ is a complex torus since 
\begin{equation*}
((H^j_{DR}(X),F),H^j(X_{\mathbb C}^{an},\mathbb Z),H^jR\Gamma(X_{\mathbb C}^{an},\alpha(X)))
\in HM_{k,gm}(k)\subset MHM_{k,gm}(k)
\end{equation*}
is a pure Hodge structure.
For $X\in\PSmVar(k)$, we have a canonical isomorphism of abelian groups 
\begin{eqnarray*}
I_{\sigma}^{j,d}(X):J_{\sigma}^{j,d}(X)\xrightarrow{\sim}
\Ext^1_{MHM_{k,gm}(k)}(\mathbb Z_{\pt}^{Hdg}(d),(H^j_{DR}(X),H^j(X_{\mathbb C}^{an},\mathbb Z),
H^jR\Gamma(X_{\mathbb C}^{an},\alpha(X)))).
\end{eqnarray*}

\begin{defi}\label{AJdef}
Let $X\in\SmVar(k)$ irreducible. 
Let $\bar X\in\PSmVar(k)$ a compactification of $X$ with $D:=\bar X\backslash X\subset\bar X$ a normal crossing divisor.
The map of complexes of abelian groups (see \cite{B2})
\begin{eqnarray*}
\mathcal R_X:\mathcal Z^d(X,\bullet)\to C_{\bullet}^{\mathcal D}(\bar X_{\mathbb C}^{an},D_{\mathbb C}^{an}),
Z\mapsto\mathcal R_X:=(T_Z,\Omega_Z,R_Z)
\end{eqnarray*}
where $C_{\bullet}^{\mathcal D}(\bar X_{\mathbb C}^{an},D_{\mathbb C}^{an})$ is the Deligne homology complex
induces the complex Abel Jacobi map for higher Chow groups
\begin{eqnarray*}
AJ_{\sigma}(X):\mathcal Z^d(X,n)^{\partial=0}_{hom}\to J_{\sigma}^{2d-1-n,d}(X), 
Z\mapsto AJ_{\sigma}(X)(Z):=D^{-1}(R'_Z), \\
R'_Z=R_Z-\Omega'_Z+T'_Z, \,; \mbox{with} \; \partial T'_Z=T_Z, \, \partial\Omega'_Z=\Omega_Z
\end{eqnarray*}
where 
\begin{equation*}
D:C_{\bullet}^{\mathcal D}(X_{\mathbb C}^{an})\to 
C_{\bullet}^{\mathcal D}(\bar X_{\mathbb C}^{an},D_{\mathbb C}^{an}) 
\end{equation*}
is the Poincare dual for Deligne homology.
\end{defi}

\begin{thm}\label{AJC}
Let $k\subset\mathbb C$ a subfield. Let $X\in\PSmVar(k)$. 
\begin{itemize}
\item[(i)]For $Z\in\mathcal Z^d(X,n)^{\partial=0}_{hom}$, we have
\begin{eqnarray*}
AJ_{\sigma}(X)(Z)=
I_{\sigma}^{j,d}(X)^{-1}(0\to (H^{2d-1-n}_{DR}(X),H^{2d-1-n}_{\sing}(X^{an}_{\mathbb C},\mathbb Z),
H^{2d-1-n}R\Gamma(X_{\mathbb C}^{an},\alpha(X))) \\ 
\xrightarrow{(j^*,j^*,0)} \\
(H^{2d-1}_{DR}((X\times\square^n)\backslash|Z|),H^{2d-1}_{\sing}(((X\times\square^n)\backslash|Z|)^{an}_{\mathbb C},\mathbb Z),
H^{2d-1-n}R\Gamma((X\times\square^n)\backslash|Z|)_{\mathbb C}^{an},\alpha(X\times\square^n))^{[Z]} \\
\xrightarrow{(\partial,\partial,0)} \\
(H_{DR,|Z|}^{2d}(X\times\square^n),H_{\sing,|Z|}^{2d}((X\times\square^n)^{an}_{\mathbb C},\mathbb Z),
R\Gamma_{|Z|}((X\times\square^n)_{\mathbb C}^{an},\alpha(X\times\square^n)))^{[Z]}
=\mathbb Z_{\pt}^{Hdg}(n-d)\to 0)
\end{eqnarray*}
where $j:(X\times\square^n)\backslash|Z|\hookrightarrow X\times\square^n$ is the open embedding and
\begin{equation*}
H^{2d}_{Hdg,|Z|}(X\times\square^n)^{[Z]}\subset H^{2d}_{Hdg,|Z|}(X\times\square^n), \;
H^{2d-1}_{Hdg}((X\times\square^n)\backslash|Z|)^{[Z]}\subset H^{2d-1}_{Hdg}((X\times\square^n)\backslash|Z|).
\end{equation*}
are the subobjects given by the pullback of the class of $Z$ (see section 2).
\item[(ii)]Let $Z\in\mathcal Z^d(X,n)^{\partial=0}_{hom}$. Then $AJ_{\sigma}(X)(Z)=0$ if and only if there exist 
$w\in H^{2d-1}_{DR}((X\times\square^n)\backslash|Z|)^{[Z]}$ such that 
\begin{itemize}
\item $w\in F^dH^{2d-1}_{DR}((X\times\square^n)\backslash|Z|)$, 
\item $ev((X\times\square^n)\backslash|Z|)(w)\in 
H^{2d-1}_{\sing}(((X\times\square^n)\backslash|Z|)^{an}_{\mathbb C},\mathbb Z(2i\pi))$
\item $\partial w\neq 0$.
\end{itemize}
\end{itemize}
\end{thm}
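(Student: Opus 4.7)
The plan is to prove (i) by identifying the Deligne-homological construction of the Abel--Jacobi map from definition \ref{AJdef} with the Yoneda extension class of the $\square^n$-localization sequence in the category $MHM_{k,gm}(k)$, and to deduce (ii) from (i) by translating vanishing of that extension class into the existence of an explicit splitting.

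For (i), the Deligne cohomology complex $C^{\mathcal D}_\bullet(\bar X^{an}_{\mathbb C},D^{an}_{\mathbb C})$ represents $\Ext^\bullet_{MHM_{k,gm}(k)}(\mathbb Z^{Hdg},-)$ applied to the logarithmic Hodge complex of the pair $(\bar X,D)$. The assignment $Z\mapsto\mathcal R_X(Z)=(T_Z,\Omega_Z,R_Z)$ is by construction the Deligne homology class of $Z$, and its image under the Poincar\'e duality $D$ is the Yoneda pullback, along the inclusion of the line generated by $[Z]$, of the localization sequence for $(X\times\square^n,|Z|)$. The isomorphism $I^{j,d}_\sigma(X)^{-1}$ then matches $\Ext^1_{MHS}(\mathbb Z^{Hdg}(n-d),H^{2d-1-n}(X))$ with the generalized Jacobian $J^{2d-1-n,d}_\sigma(X)$. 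Homological triviality of $Z$ is used to ensure that $[Z]\subset H^{2d}_{|Z|}(X\times\square^n)^0$, which is precisely the condition that makes the pullback sequence a short exact sequence of mixed Hodge structures.

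For (ii), using (i), $AJ_\sigma(X)(Z)=0$ is equivalent to the splitting, in $MHM_{k,gm}(k)_{\mathbb Q}$, of the sequence
\begin{equation*}
0\to H^{2d-1-n}(X)\xrightarrow{j^*}H^{2d-1}(U)^{[Z]}\xrightarrow{\partial}\mathbb Z^{Hdg}(n-d)\to 0,\quad U:=(X\times\square^n)\setminus|Z|.
\end{equation*}
Giving such a splitting is equivalent to producing a class $w\in H^{2d-1}_{DR}(U)^{[Z]}$ respecting simultaneously the three structures defining a morphism of mixed Hodge structures: it lies in $F^dH^{2d-1}_{DR}(U)$ (compatibility with the Hodge filtration), its Betti realization $ev(U)(w)$ lies in $H^{2d-1}_{\sing}(U^{an}_{\mathbb C},2i\pi\mathbb Z)$ (compatibility with the integral Betti structure, via the comparison isomorphism $H^*ev(X_{\mathbb C})$ recalled in section 3.1), and it projects nontrivially to the rank-one quotient ($\partial w\neq 0$). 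After rescaling $w$ by a nonzero rational, $\partial w$ can be normalized to the generator $[Z]$ of $\mathbb Z^{Hdg}(n-d)$, which yields the desired splitting. Conversely, any splitting of the rational extension is represented by such a $w$.

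The main obstacle is the careful verification of (i), specifically the identification of Bloch's explicit triple $(T_Z,\Omega_Z,R_Z)$ with the Yoneda extension class of the localization sequence after the Poincar\'e duality twist. This is a matter of unravelling standard definitions, for which I would rely on the standard treatments of the Abel--Jacobi map for higher Chow groups (e.g.\ Kerr--Lewis--M\"uller-Stach, El Zein--Zucker). Once (i) is in place, (ii) follows essentially formally from the correspondence between vanishing $\Ext^1$ classes and splittings of short exact sequences in $MHM_{k,gm}(k)_{\mathbb Q}$, together with the description of morphisms of mixed Hodge structures in terms of classes that are simultaneously integral and in the Hodge filtration.
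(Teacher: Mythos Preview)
Your proposal is correct and follows essentially the same approach as the paper: for (i) the paper simply cites Carlson \cite{Carlson} (you cite the equivalent standard references of Kerr--Lewis--M\"uller-Stach and El~Zein--Zucker), and for (ii) the paper says it follows from (i), which is exactly your argument via the splitting characterization of vanishing $\Ext^1$ in the category of mixed Hodge structures.
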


\begin{proof}
\noindent(i):See \cite{Carlson}.

\noindent(ii):Follows from (i)
\end{proof}

Let $f:X\to S$ a morphism with $S,X\in\SmVar(k)$.
We have for $j,d\in\mathbb N$ such that $H^jRf_*\mathbb C_{X_{\mathbb C}^{an}}$ is a local system
and $F^dH^jRf_{*Hdg}(O_X,F_b)\subset H^j\int_f(O_X)$ are locally free sub $O_S$ modules, 
the generalized relative intermediate Jacobian
\begin{eqnarray*}
J_{\sigma}^{j,d}(X/S):=
(H^jRf_*\mathbb C_{X_{\mathbb C}^{an}}\otimes O_{S_{\mathbb C}^{an}})/
(F^d(H^jRf_*\mathbb C_{X_{\mathbb C}^{an}}\otimes O_{S_{\mathbb C}^{an}})\oplus H^jRf_*\mathbb Z_{X_{\mathbb C}^{an}})
\end{eqnarray*}
where $F$ is given by the Hodge filtration on $H^j\int_f(O_X)$ and
\begin{equation*}
H^jRf_*\alpha(X):H^jRf_*\mathbb C_{X_{\mathbb C}^{an}}\xrightarrow{\sim}DR(S)(H^j\int_f(O_X)).
\end{equation*}
A generalized normal function is then a section $\nu\in\Gamma(S_{\mathbb C}^{an},J_{\sigma}^{j,d}(X/S))$
which is horizontal (i.e. $\nabla\nu=0$).
For $s\in S$, we get immediately that $i_s^{*mod}J_{\sigma}^{j,d}(X/S)=J_{\sigma}^{j,d}(X_s)$.
In particular we get for $f:X\to S$ a smooth projective morphism with $S,X\in\SmVar(k)$
and $j,d\in\mathbb N$, the relative intermediate Jacobian
\begin{eqnarray*}
J_{\sigma}^{j,d}(X/S):=
(H^jRf_*\mathbb C_{X_{\mathbb C}^{an}}\otimes O_{S_{\mathbb C}^{an}})/
(F^d(H^jRf_*\mathbb C_{X_{\mathbb C}^{an}}\otimes O_{S_{\mathbb C}^{an}})\oplus H^jRf_*\mathbb Z_{X_{\mathbb C}^{an}})
\end{eqnarray*}
where $F$ is given by the Hodge filtration on $H^j\int_f(O_X)=H^jRf_*\Omega^{\bullet}_{X/S}$ and $H^jRf_*\alpha(X)$.
A normal function is then a section $\nu\in\Gamma(S_{\mathbb C}^{an},J_{\sigma}^{j,d}(X/S))$ which is horizontal.
For $f:X\to S$ a smooth projective morphism with $S,X\in\SmVar(k)$, we have a canonical isomorphism of abelian groups 
\begin{eqnarray*}
I_{\sigma}^{j,d}(X/S):J_{\sigma}^{j,d}(X/S)\xrightarrow{\sim}
\Ext^1_{MHM(S)}(\mathbb Z_S^{Hdg}(d),(H^jRf_{*Hdg}(O_X,F_b),H^jRf_*\mathbb Z_{X_{\mathbb C}^{an}},H^jRf_*\alpha(X))).
\end{eqnarray*}

\begin{defiprop}\label{AJdefS}
Let $f:X\to S$ a morphism with $S,X\in\SmVar(k)$. Let $j:S^o\hookrightarrow S$ an open subset such that 
for all $j,d\in\mathbb Z$, $j^*H^jRf_*\mathbb C_{X_{\mathbb C}^{an}}$ is a local system
and $j^*F^dH^jRf_{*Hdg}(O_X,F_b)\subset j^*H^j\int_f(O_X)$ is a locally free sub $O_S$ module.
Let $\sigma:k\hookrightarrow\mathbb C$ an embedding.
Let $d,n\in\mathbb N$. We have then, denoting $X^o:=X\times_SS^o$ and using definition \ref{AJdef}, the map
\begin{eqnarray*}
AJ_{\sigma}(X^o/S^o):\mathcal Z^d(X,n)^{f,\partial=0}_{fhom}\to
\Gamma(S_{\mathbb C}^{o,an},J_{\sigma}^{2d-n-1,d}(X^o/S^o))
\subset\Gamma(S_{\mathbb C}^{o,an},\oplus_{s\in S_{\mathbb C}^{o,an}}i_{s*}J_{\sigma'}^{2d-n-1,d}(X_s)), \\
Z\mapsto AJ_{\sigma}(X^o/S^o)(Z):=\nu_Z:=((s\in S_{\mathbb C}^o)\mapsto 
(AJ_{\sigma'}(X_s)(Z_s)\in J_{\sigma'}^{2d-n-1,d}(X_s)))
\end{eqnarray*}
where $\mathcal Z^d(X,n)^{f,\partial=0}_{fhom}\subset\mathcal Z^d(X,n)^{f,\partial=0}$ denote the sub-abelian group
consisting of algebraic cycles $Z$ with $Z_s:=i_s^*Z\in\mathcal Z^d(X_s,n)^{\partial=0}_{hom}$,
and $\sigma':k(s)\hookrightarrow\mathbb C$ is the embedding given by $s$ extending $\sigma:k\hookrightarrow\mathbb C$, 
denoting again $s:=\pi{k/\mathbb C}(S)(s)\in S$, $\pi_{k/\mathbb C}(S):S_{\mathbb C}\to S$ being the projection.
\end{defiprop}

\begin{proof}
Standard : to show that 
\begin{equation*}
\nu_Z:=(s\in S_{\mathbb C}^o)\mapsto AJ_{\sigma}(X_s)(Z_s)\in
\Gamma(S_{\mathbb C}^{o,an},\oplus_{s\in S_{\mathbb C}^{o,an}}i_{s*}J_{\sigma}^{2d-n-1,d}(X_s)) 
\end{equation*}
is holomorphic and horizontal
we consider a compactification $\bar f:\bar X\to S$ of $f$ with $\bar X\in\SmVar(k)$ and 
use trivializations of 
$f:(\bar X_{\mathbb C}^{o,an},(\bar X\backslash X)_{\mathbb C}^{o,an})\to S_{\mathbb C}^{o,an}$ 
which gives trivialization of the local system $j^*Rf_*\mathbb Z_{S_{\mathbb C}^{an}}$ (see \cite{B2} for example).
\end{proof}

\begin{cor}\label{AJCcor}
Let $f:X\to S$ a smooth projective morphism with $S,X\in\SmVar(k)$. 
Let $d,n\in\mathbb N$. For $Z\in\mathcal Z^d(X,n)^{f,\partial=0}_{fhom}$, we have 
\begin{eqnarray*}
AJ_{\sigma}(X/S)(Z)=
I_{\sigma}^{j,d}(X/S)^{-1}(0\to (H^{2d-1-n}Rf_{*Hdg}(O_X,F_b),
H^{2d-1-n}Rf_*\mathbb Z_{X^{an}_{\mathbb C}},H^{2d-1-n}Rf_*\alpha(X)) \\
\xrightarrow{(j^*,j^*,0)} 
(H^{2d-1}R(f\circ j)_{*Hdg}(O_{(X\times\square^n)\backslash|Z|},F_b),
H^{2d-1}R(f\circ j)_*\mathbb Z_{((X\times\square^n)\backslash|Z|)^{an}_{\mathbb C}}, \\
H^{2d-1}R(f\circ j)_*\alpha((X\times\square^n)\backslash|Z|))^{[Z]} 
\xrightarrow{(\partial,\partial,0)} \\
(H^{2d}Rf_{*Hdg}R\Gamma^{Hdg}_{|Z|}(O_{X\times\square^n},F_b),
(H^{2d}Rf_*R\Gamma_{|Z|}\mathbb Z_{(X\times\square^n)^{an}_{\mathbb C}}), 
Rf_*\Gamma_{|Z|}\alpha(X\times\square^n))^{[Z]}=\mathbb Z_S^{Hdg}(n-d)\to 0)
\end{eqnarray*}
where $j:(X\times\square^n)\backslash|Z|\hookrightarrow X\times\square^n$ is the open embedding and
\begin{eqnarray*}
(H^{2d}Rf_{*Hdg}R\Gamma^{Hdg}_{|Z|}\mathbb Z_{X\times\square^n}^{Hdg})^{[Z]}
\subset H^{2d}Rf_{*Hdg}R\Gamma^{Hdg}_{|Z|}\mathbb Z_{X\times\square^n}^{Hdg}, \\
(H^{2d}R(f\circ j)_{*Hdg}\mathbb Z_{(X\times\square^n)\backslash|Z|}^{Hdg})^{[Z]}
\subset H^{2d}R(f\circ j)_{*Hdg}\mathbb Z_{(X\times\square^n)\backslash|Z|}^{Hdg}.
\end{eqnarray*}
are the subobjects given by the pullback of the class of $Z$.
\end{cor}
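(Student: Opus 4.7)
The plan is to deduce this corollary from Theorem \ref{AJC}(i) applied fiber-by-fiber, combined with the pointwise definition of the relative Abel-Jacobi map given in Definition-Proposition \ref{AJdefS}. The essential content is that the relative extension class in $\Ext^1_{MHM(S)}$ obtained from the localization sequence restricts, under fiber pullback $i_s^*$, to the absolute extension class in $\Ext^1_{MHS}$ at each closed point $s \in S_{\mathbb{C}}$.

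First I would recall from Definition-Proposition \ref{AJdefS} that by construction
\begin{equation*}
\nu_Z := AJ_\sigma(X/S)(Z) \in \Gamma(S_{\mathbb{C}}^{an}, J_\sigma^{2d-1-n,d}(X/S))
\end{equation*}
is determined pointwise by $\nu_Z(s) = AJ_{\sigma'}(X_s)(Z_s)$ for each $s \in S_{\mathbb{C}}$. Similarly the isomorphism $I_\sigma^{j,d}(X/S)$ from sections of $J_\sigma^{2d-1-n,d}(X/S)$ to $\Ext^1_{MHM(S)}$ is compatible with fiber restriction, in the sense that $i_s^* \circ I_\sigma^{j,d}(X/S) = I_{\sigma'}^{j,d}(X_s) \circ i_s^*$, where on the right $i_s^*$ sends an extension of mixed Hodge modules to its fiber extension of mixed Hodge structures. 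Thus to prove the equality it suffices to verify that the RHS of the corollary, when pulled back along $i_s^*$, yields the extension described in Theorem \ref{AJC}(i) for $(X_s, Z_s)$.

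Second I would invoke the base change properties of the mixed Hodge module pushforward for smooth projective morphisms (Saito's theorem) together with proper base change for the Betti realization: for the smooth projective morphism $f:X \to S$ and the closed embedding $i_s$, one has canonical isomorphisms
\begin{equation*}
i_s^* H^j Rf_{*Hdg}(O_X,F_b) \simeq H^j(X_s)_{Hdg}, \quad
i_s^* H^j Rf_* \mathbb{Z}_{X^{an}_{\mathbb{C}}} \simeq H^j(X_s^{an}_{\mathbb{C}}, \mathbb{Z}),
\end{equation*}
and likewise for the terms involving $(X \times \square^n) \setminus |Z|$ (using the open embedding $j$) and the local cohomology $R\Gamma^{Hdg}_{|Z|}$ (using the support base change property for cycles flat over $S$). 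The hypothesis $Z \in \mathcal{Z}^d(X,n)^{f,\partial=0}_{fhom}$ ensures that flatness over $S$ is preserved and that $Z_s$ is a well-defined cycle homologically equivalent to zero on $X_s$ for all $s \in S_{\mathbb{C}}$.

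Third, applying $i_s^*$ term-wise to the short exact sequence of mixed Hodge modules in the corollary produces precisely the short exact sequence of mixed Hodge structures appearing in Theorem \ref{AJC}(i) for the pair $(X_s, Z_s)$, including the subobject constraints $H^{2d-1}(\cdots)^{[Z]}$ and $H^{2d}_{|Z|}(\cdots)^{[Z]}$ which restrict to their fiber analogues thanks to the class $[Z_s] = i_s^*[Z]$. By Theorem \ref{AJC}(i), the class of this fiber extension is $I_{\sigma'}^{j,d}(X_s)(AJ_{\sigma'}(X_s)(Z_s))$, which by the first step equals $I_{\sigma'}^{j,d}(X_s)(\nu_Z(s))$. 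Since this agrees fiber-wise with $i_s^*$ applied to the RHS, and since a section of $J_\sigma^{2d-1-n,d}(X/S)$ is determined by its values at all closed points, the corollary follows. The main subtlety will be ensuring the compatibility of the various subobjects $(\cdot)^{[Z]}$ with fiber restriction, which reduces to the flatness of $Z$ over $S$ and the compatibility of cycle classes with base change in mixed Hodge module theory.
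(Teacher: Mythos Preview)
Your proposal is correct and follows essentially the same approach as the paper: the paper's proof is a single sentence stating that the result follows from Theorem \ref{AJC} by the (pointwise) definition of the relative Abel--Jacobi map together with base change for mixed Hodge modules. You have simply unpacked these two ingredients in more detail, verifying the fiberwise compatibility explicitly.
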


\begin{proof}
Follows from theorem \ref{AJC} by definition of the Abel Jacobi map and by the base change for mixed hodge modules.
\end{proof}

We have the following main result of \cite{BPS} :

\begin{thm}\label{BPSthm}
Let $f:X\to S$ a smooth projective morphism with $S,X\in\SmVar(k)$. Let $d,n\in\mathbb N$. 
Let $\sigma:k\hookrightarrow\mathbb C$ an embedding.
For $Z\in\mathcal Z^d(X,n)^{f,\partial=0}_{fhom}$, the zero locus $V(\nu_Z)\subset S_{\mathbb C}$ of 
\begin{equation*}
\mu_Z:=AJ_{\sigma}(X/S)(Z)\in\Gamma(S_{\mathbb C}^{an},J_{\sigma}^{2d-1-n,d}(X/S))
\end{equation*}
is an algebraic subvariety.
\end{thm}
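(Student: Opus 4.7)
The plan is to follow the Brosnan--Pearlstein strategy of extending $\nu_Z$ to an admissible normal function over a smooth compactification of $S_{\mathbb C}$, constructing a N\'eron model for the intermediate Jacobian fibration, and finally invoking GAGA once analyticity of the zero locus closure is established. Throughout, the key input is Saito's theory of mixed Hodge modules, which makes Corollary \ref{AJCcor} available in a global form.

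First, I would pick a smooth compactification $\bar f:\bar X\to\bar S$ of $f$ with $\bar S\in\PSmVar(k)$, $\bar X\in\SmVar(k)$, and $\bar D:=\bar S\setminus S\subset\bar S$ a normal crossing divisor (use Nagata plus Hironaka). By Corollary \ref{AJCcor}, $\nu_Z$ corresponds to an extension class in $\MHM(S_{\mathbb C}^{an})$
\begin{equation*}
0\to \mathcal H(d)\to \mathcal E\to \mathbb Z_S^{Hdg}\to 0,\quad \mathcal H:=H^{2d-1-n}Rf_{*Hdg}(O_X,F_b),
\end{equation*}
and the associated variation of mixed Hodge structure is admissible in the sense of Steenbrink--Zucker and Kashiwara, because it arises geometrically via Saito's direct image functor applied to the open embedding $(X\times\square^n)\setminus|Z|\hookrightarrow X\times\square^n$. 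Admissibility is what controls the behaviour at the boundary $\bar D$.

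Second, I would construct the N\'eron model $\bar{\mathcal J}\to\bar S_{\mathbb C}^{an}$ of the relative Jacobian $J_{\sigma}^{2d-1-n,d}(X/S)\to S_{\mathbb C}^{an}$, following Saito (unipotent monodromy case) and Brosnan--Pearlstein (general case). This is a separated Hausdorff analytic space, and the admissible normal function $\nu_Z$ lifts to a section $\bar\nu_Z:\bar S_{\mathbb C}^{an}\to \bar{\mathcal J}$. The key structural fact is that the fiber of $\bar{\mathcal J}$ over a boundary point $s\in\bar D$ is computed in terms of the limit mixed Hodge structure of $\mathcal H$ at $s$ and admits a group of connected components (an analytic analogue of the component group of a N\'eron model).

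Third, form the closed analytic subset $\bar V:=\bar\nu_Z^{-1}(\bar{\mathcal J}^{(0)})\subset\bar S_{\mathbb C}^{an}$, where $\bar{\mathcal J}^{(0)}\subset\bar{\mathcal J}$ is the identity component. The heart of the argument --- and the hardest step --- is to show that this locus is analytic in $\bar S_{\mathbb C}^{an}$. This uses the nilpotent orbit theorem and the SL$(2)$-orbit theorem of Cattani--Kaplan--Schmid to control the asymptotic behaviour of $\nu_Z$ near each stratum of $\bar D$, so that $\bar\nu_Z$ extends continuously and its vanishing locus is cut out locally by analytic equations. Note that $V(\nu_Z)=\bar V\cap S_{\mathbb C}^{an}$.

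Finally, since $\bar S_{\mathbb C}$ is smooth projective and $\bar V\subset \bar S_{\mathbb C}^{an}$ is closed analytic, Serre's GAGA gives that $\bar V$ is the analytification of an algebraic subvariety of $\bar S_{\mathbb C}$; intersecting with the open $S_{\mathbb C}\subset\bar S_{\mathbb C}$ yields algebraicity of $V(\nu_Z)$. The main obstacle is the third step: without admissibility and the deep degeneration theorems of Saito and Cattani--Kaplan--Schmid, there is no reason for the zero locus to close up to a genuine analytic subvariety at the boundary rather than a transcendental subset.
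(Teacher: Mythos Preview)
Your proposal is correct and follows the same compactify--extend--GAGA strategy as the paper, which simply cites \cite{BPS} for the existence of an analytic subset $\Sigma(\nu_Z)\subset\bar S_{\mathbb C}^{an}$ with $V(\nu_Z)=\Sigma(\nu_Z)\cap S_{\mathbb C}$ and then applies GAGA; you have unpacked what that citation contains. One slip: in your third step, $\bar V$ should be the preimage of the \emph{zero section} of $\bar{\mathcal J}$, not of the identity component $\bar{\mathcal J}^{(0)}$ --- over $S_{\mathbb C}^{an}$ the fibers of $J_{\sigma}^{2d-1-n,d}(X/S)$ are already connected, so $\bar\nu_Z^{-1}(\bar{\mathcal J}^{(0)})\cap S_{\mathbb C}^{an}$ would be all of $S_{\mathbb C}^{an}$ rather than $V(\nu_Z)$.
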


\begin{proof}
See \cite{BPS}: if $\bar S\in\PSmVar(k)$ is a compactification of $S$ 
with $\bar S\backslash S=\cup_iD_i\subset\bar S$ a normal crossing divisor,
there exist an analytic subset $\Sigma(\nu_Z)\subset\bar S_{\mathbb C}$ such that 
$V(\nu_Z)=\Sigma(\nu_Z)\cap S_{\mathbb C}$. 
By GAGA $\Sigma(\nu_Z)\subset\bar S_{\mathbb C}$ is algebraic subvariety. 
Hence $V(\nu_Z)\subset S_{\mathbb C}$ is an algebraic subvariety.
\end{proof}

%=================================================================
\subsection{The etale Abel Jacobi map for higher Chow group and etale normal functions}
%================================================================

Let $k$ a field of finite type over $\mathbb Q$. 
Let $\bar k$ the algebraic closure of $k$ and denote by $G=Gal(\bar k/k)$ its galois group.
Let $p\in\mathbb N$ a prime integer.

\begin{defi}\label{AJpdef}
Let $X\in\SmVar(k)$ irreducible. 
Let $\bar X\in\PSmVar(k)$ a compactification of $X$ with $D:=\bar X\backslash X\subset\bar X$ a normal crossing divisor.
Denote $G=Gal(\bar k/k)$ the absolute galois group. The cycle class map
\begin{eqnarray*}
\mathcal R^{et,p}_X:\mathcal Z^d(X,n)^{\partial=0}\to H^{2d-n}_{p_X(|Z|),et}(X,D,\hat{\mathbb Z_p})
\to H_{et}^{2d-n}(X,D,\hat{\mathbb Z_p}), 
\end{eqnarray*}
to continuous etale cohomology induces the etale Abel Jacobi map for higher Chow groups
\begin{eqnarray*}
AJ_{et,p}(X):\mathcal Z^d(X,n)^{\partial=0}_{hom}\to\Ext^1_G(\bar k,H_{et}^{2d-1-n}(X_{\bar k},D_{\bar k},\mathbb Z_p)), \\ 
Z\mapsto AJ_{et,p}(X)(Z):=L^1\mathcal R^{et,p}_X(Z)/L^2\mathcal R^{et,p}_X(Z),
\end{eqnarray*}
where $L$ is the filtration given by the Leray spectral sequence of the map of sites $a_X::X^{et}\to\Spec(k)^{et}$.
\end{defi}

\begin{thm}\label{AJk}
Let $X\in\PSmVar(k)$. Denote $G=Gal(\bar k/k)$ the absolute galois group.
\begin{itemize}
\item[(i)]For $Z\in\mathcal Z^d(X,n)^{\partial=0}_{hom}$, we have
\begin{eqnarray*}
AJ_{et,p}(X)(Z)=(0\to H_{et}^{2d-1-n}(X_{\bar k},\mathbb Z_p) 
\xrightarrow{j^*}H_{et}^{2d-1}(((X\times\square^n)\backslash|Z|)_{\bar k},\mathbb Z_p)^{[Z]} \\
\xrightarrow{\partial}H_{|Z|,et}^{2d}((X\times\square^n)_{\bar k},\mathbb Z_p)^{[Z]}=\bar k(n-d)\to 0)
\end{eqnarray*}
with $j:(X\times\square^n)\backslash|Z|\hookrightarrow X\times\square^n$ the open embedding, and 
\begin{eqnarray*}
H_{|Z|,et}^{2d}((X\times\square^n)_{\bar k},\mathbb Z_p)^{[Z]}\subset 
H_{|Z|,et}^{2d}((X\times\square^n)_{\bar k},\mathbb Z_p), \\ 
H_{et}^{2d-1}(((X\times\square^n)\backslash|Z|)_{\bar k},\mathbb Z_p)^{[Z]}\subset 
H_{et}^{2d-1}(((X\times\square^n)\backslash|Z|)_{\bar k},\mathbb Z_p)
\end{eqnarray*}
are the subobjects given by the pullback by the class of $Z$ (see section 2).
\item[(ii)]Let $Z\in\mathcal Z^d(X,n)^{\partial=0}_{hom}$. Then $AJ_{et,p}(X)(Z)=0$ if and only if there exist 
\begin{equation*}
\alpha\in H_{et}^{2d-1}(((X\times\square^n)\backslash|Z|)_{\bar k},\mathbb Z_p)^{[Z]} 
\end{equation*}
such that $\alpha\in H_{et}^{2d-1}(((X\times\square^n)\backslash|Z|)_{\bar k},\mathbb Z_p)^G$
and $\partial\alpha\neq 0$.
\end{itemize}
\end{thm}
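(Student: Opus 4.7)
My plan is to deduce (i) from a comparison between the continuous etale localization sequence for the pair $(X \times \square^n, |Z|)$ and the Leray/Hochschild--Serre spectral sequence for $a_X : (X \times \square^n)^{et} \to (\Spec k)^{et}$. By definition $AJ_{et,p}(X)(Z)$ is the class of $\mathcal R^{et,p}_X(Z)$ in $L^1/L^2$. Since the image of $\mathcal R^{et,p}_X(Z)$ in the unsupported cohomology vanishes geometrically by the homological triviality hypothesis $[Z]_{\bar k} = 0$, the class lies in $L^1$ modulo $L^2$, i.e.\ in
\begin{equation*}
H^1(G, H^{2d-1-n}(X_{\bar k}, \mathbb Z_p)) \cong \Ext^1_G(\mathbb Z_p, H^{2d-1-n}(X_{\bar k}, \mathbb Z_p)),
\end{equation*}
using the equivalence between $\Mod(\bar k, G)$ and $\PSh(k^{et})$ recorded in the preliminaries.

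The key step will be to identify this element with the Yoneda class of the stated short exact sequence. I plan to do this via the standard Jannsen/Hochschild--Serre ``transgression equals Yoneda class'' mechanism. One pulls back the geometric localization short exact sequence
\begin{equation*}
0 \to H^{2d-1-n}(X_{\bar k}, \mathbb Z_p) \to H^{2d-1}(U_{\bar k}, \mathbb Z_p)^0 \xrightarrow{\partial} H^{2d}_{|Z|}((X\times\square^n)_{\bar k}, \mathbb Z_p)^0 \to 0
\end{equation*}
along the inclusion of the subobject $H^{2d}_{|Z|}((X\times\square^n)_{\bar k},\mathbb Z_p)^{[Z]} = \bar k(n-d)$ introduced in section~2, yielding the second row recalled there. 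Applying $Ra_{X*}$ to the localization distinguished triangle on $(X \times \square^n)^{et}$ and chasing the resulting long exact sequence on $k^{et}$ (equivalently using Hochschild--Serre), the connecting morphism $H^0(G, \bar k(n-d)) \to H^1(G, H^{2d-1-n}(X_{\bar k}, \mathbb Z_p))$ sends the canonical generator $[Z]$ to precisely the Yoneda class of this extension. Comparing with the image of $\mathcal R^{et,p}_X(Z)$ under the localization boundary, whose geometric image is exactly $[Z]$, then identifies the two classes and proves (i).

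Part (ii) is then purely formal from (i): an extension of continuous $G$-modules has trivial $\Ext^1_G$ class iff it splits as a sequence of $G$-modules, iff $\partial$ admits a $G$-equivariant section, iff there exists $\alpha \in H^{2d-1}(((X\times\square^n)\setminus|Z|)_{\bar k}, \mathbb Z_p)^{[Z]}$ fixed by $G$ with $\partial \alpha$ a generator of $\bar k(n-d)$; in particular $\partial \alpha \neq 0$. Conversely, such an $\alpha$ produces a splitting. The main technical obstacle lies entirely in (i), namely the identification of the Hochschild--Serre differential with the Yoneda $\Ext^1$ class in the setting of continuous $p$-adic etale cohomology; this is essentially Jannsen's theorem on continuous etale cohomology (to which the acknowledgements allude), and its application requires some care with the $\mathbb Z_p$-coefficient / pro-etale formalism set up in section~2.
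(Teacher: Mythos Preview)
Your proposal is correct and matches the paper's approach: the paper proves (i) by citing \cite{Jansen} directly and deduces (ii) formally from (i), which is exactly the Jannsen Hochschild--Serre/Yoneda identification you sketch for (i) together with the splitting-criterion argument you give for (ii).
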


\begin{proof}
\noindent(i):See \cite{Jansen}.

\noindent(ii): Follows from (i).
\end{proof}

\begin{defi}\label{AJpdefS}
Let $f:X\to S$ a morphism with $S,X\in\SmVar(k)$. Let $j:S^o\hookrightarrow S$ an open subset such that 
for all $j\in\mathbb Z$, $j^*H^jRf_*\mathbb Z_{p,X_{\bar k}}$ is a local system.
Let $d,n\in\mathbb N$. We have then, denoting $X^o:=X\times_SS^o$ and using definition \ref{AJpdef}, the map
\begin{eqnarray*}
AJ_{et,p}(X^o/S^o):\mathcal Z^d(X,n)^{f,\partial=0}_{fhom}\to
\Gamma(S^o,\oplus_{s\in S^o_{(0)}}i_{s*}\Ext^1_{Gal(\bar k/k(s))}(\bar k,H_{et}^{2d-n-1}(X_{s,\bar k},\mathbb Z_p))), \\
Z\mapsto AJ_{et,p}(X^o/S^o)(Z):=\nu^{et,p}_Z := \\
((s\in S^o_{(0)})\mapsto (AJ_{et,p}(X_s)(Z_s)\in\Ext^1_{Gal(\bar k/k(s))}(\bar k,H_{et}^{2d-n-1}(X_{s,\bar k},\mathbb Z_p))))
\end{eqnarray*}
where $\mathcal Z^d(X,n)^{f,\partial=0}_{fhom}\subset\mathcal Z^d(X,n)^{f,\partial=0}$ denote the subabelian group
consisting of algebraic cycles $Z$ with $Z_s:=i_s^*Z\in\mathcal Z^d(X_s,n)_{hom}$.
Recall that $i_s:\left\{s\right\}\hookrightarrow S_{(0)}\subset S$ is a closed Zariski point of $S$.
\end{defi}

We now localize, for each prime number $l$ and each embedding $\sigma_l:k\hookrightarrow\mathbb C_l$
the definition given above. 

\begin{defi}\label{AJpdefl}
Let $X\in\SmVar(k)$ irreducible. 
Let $\bar X\in\PSmVar(k)$ a compactification of $X$ with $D:=\bar X\backslash X\subset\bar X$ a normal crossing divisor.
Let $\sigma_l:k\hookrightarrow\mathbb C_l$ an embedding.
Then $k\subset\bar k\subset\mathbb C_l$, where $\bar k$ is the algebraic closure of $k$
and $k\subset\hat k_{\sigma_l}\subset\mathbb C_l$ where $\hat k_{\sigma_l}$ is the completion of $k$ with respect to $\sigma_l$. 
Denote $\hat G_{\sigma_l}:=Gal(\mathbb C_l\hat k_{\sigma_l})$. The cycle class map
\begin{eqnarray*}
\mathcal R_{X,\sigma_l}^{et,p}:\mathcal Z^d(X,n)^{\partial=0}
\to H^{2d-n}_{p_X(|Z|),et}(X_{\hat k_{\sigma_l}},D_{\hat k_{\sigma_l}},\hat{\mathbb Z_p})
\to H_{et}^{2d-n}(X_{\hat k_{\sigma_l}},D_{\hat k_{\sigma_l}},\hat{\mathbb Z_p}), 
\end{eqnarray*}
to continuous etale cohomology induces the etale Abel Jacobi map for higher Chow groups
\begin{eqnarray*}
AJ_{et,p,\sigma_l}(X):\mathcal Z^d(X,n)^{\partial=0}_{hom}\to
\Ext^1_{\hat G_{\sigma_l}}(\mathbb C_l,H_{et}^{2d-1-n}(X_{\mathbb C_l},D_{\mathbb C_l},\mathbb Z_p)), \\ 
Z\mapsto AJ_{et,p,\sigma_l}(X)(Z):=L^1\mathcal R_{X,\sigma_l}^{et,p}(Z)/L^2\mathcal R_{X,\sigma_l}^{et,p}(Z),
\end{eqnarray*}
where $L$ is the filtration given by the Leray spectral sequence of the map of sites 
$a_X::X_{\hat k_{\sigma_l}}^{et}\to\Spec(\hat k_{\sigma_l})^{et}$.
We have then the commutative diagram
\begin{equation*}
\xymatrix{\, & \Ext^1_G(\bar k,H_{et}^{2d-1-n}(X_{\bar k},D_{\bar k},\mathbb Z_p))
\ar[dd]^{\Ext^1(\ad(\pi_{k/\hat k_{\sigma_l}}^*,\pi_{k/\hat k_{\sigma_l}*})(-),
\ad(\pi_{k/\hat k_{\sigma_l}}^*,\pi_{k/\hat k_{\sigma_l}*})(-))} \\
\mathcal Z^d(X,n)^{\partial=0}_{hom}\ar[ru]^{AJ_{et,p}(X)}\ar[rd]^{AJ_{et,p,\sigma_l}(X)} & \, \\
\, & \Ext^1_{\hat G_{\sigma_l}}(\mathbb C_l,H_{et}^{2d-1-n}(X_{\mathbb C_l},D_{\mathbb C_l},\mathbb Z_p))
=\Ext^1_{\hat G_{\sigma_l}}(\bar k,H_{et}^{2d-1-n}(X_{\bar k},D_{\bar k},\mathbb Z_p))}
\end{equation*}
where the right column arrow is given by the restriction 
$\pi_{k/\hat k_{\sigma_l}}:\hat G_{\sigma_l}\hookrightarrow G$.
\end{defi}

\begin{thm}\label{AJkl}
Let $X\in\PSmVar(k)$. Let $\sigma_l:k\hookrightarrow\mathbb C_l$ an embedding and
$k\subset\hat k_{\sigma_l}\subset\mathbb C_l$ the completion of $k$ with respect to $\sigma_l$.
\begin{itemize}
\item[(i)]For $Z\in\mathcal Z^d(X,n)^{\partial=0}_{hom}$, we have
\begin{eqnarray*}
AJ_{et,p,\sigma_l}(X)(Z)=(0\to H_{et}^{2d-1-n}(X_{\mathbb C_l},\mathbb Z_p) 
\xrightarrow{j^*}H_{et}^{2d-1}(((X\times\square^n)\backslash|Z|)_{\mathbb C_l},\mathbb Z_p)^{[Z]} \\
\xrightarrow{\partial}H_{|Z|,et}^{2d}((X\times\square^n)_{\mathbb C_l},\mathbb Z_p)^{[Z]}=\mathbb C_l(n-d)\to 0)
\end{eqnarray*}
with $j:X\times\square^n\backslash|Z|\hookrightarrow X$ the open embedding and 
\begin{eqnarray*}
H_{|Z|,et}^{2d}((X\times\square^n)_{\mathbb C_l},\mathbb Z_p)^{[Z]}\subset 
H_{|Z|,et}^{2d}((X\times\square^n)_{\mathbb C_l},\mathbb Z_p), \\
H_{|Z|,et}^{2d}(((X\times\square^n)\backslash|Z|)_{\mathbb C_l},\mathbb Z_p)^{[Z]}\subset 
H_{|Z|,et}^{2d}(((X\times\square^n)\backslash|Z|)_{\mathbb C_l},\mathbb Z_p),
\end{eqnarray*}
are the subobjects given by the pullback by the class of $Z$ (see section 2).
\item[(ii)]Let $Z\in\mathcal Z^d(X,n)^{\partial=0}_{hom}$. Then $AJ_{et,p,\sigma_l}(X)(Z)=0$ if and only if there exist 
\begin{equation*}
\alpha\in H_{et}^{2d-1}(((X\times\square^n)\backslash|Z|)_{\mathbb C_l},\mathbb Z_p)^{[Z]} 
\end{equation*}
such that $\partial\alpha\neq 0$ and 
$\alpha\in H_{et}^{2d-1}(((X\times\square^n)\backslash|Z|)_{\mathbb C_l},\mathbb Z_p)^{\hat G_{\sigma_l}}$.
\end{itemize}
\end{thm}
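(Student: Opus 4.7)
The plan is to mimic the proof of Theorem \ref{AJk} (which treats the global Galois group $G = \Gal(\bar k / k)$) by base-changing along the inclusion $\hat G_{\sigma_l} \hookrightarrow G$ induced by $\hat k_{\sigma_l} \subset \mathbb{C}_l$, and then to deduce (ii) formally from (i) by the standard criterion for splitting of a short exact sequence of $\hat G_{\sigma_l}$-modules.

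For (i), I would first recall that, by Jannsen's construction (as used in the proof of Theorem \ref{AJk}(i)), the cycle class
\begin{equation*}
\mathcal R^{et,p}_{X,\sigma_l}(Z) \in H^{2d-n}_{|Z|,et}(X_{\hat k_{\sigma_l}} \times \square^n, \hat{\mathbb Z_p})
\end{equation*}
is computed by the Leray spectral sequence for $a_{X_{\hat k_{\sigma_l}}}: X_{\hat k_{\sigma_l}}^{et} \to \Spec(\hat k_{\sigma_l})^{et}$, whose $E_2^{p,q}$-term is $H^p(\hat G_{\sigma_l}, H^q_{et}(X_{\mathbb C_l} \times \square^n \setminus |Z|, \mathbb Z_p))$. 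The filtration $L^{\bullet}$ on $H^{2d-n}_{et}$ comes from this spectral sequence, and the associated graded piece $L^1/L^2$ is canonically identified with $\Ext^1_{\hat G_{\sigma_l}}$ of $\mathbb C_l$ with the localization sequence in cohomology. The base-change compatibility diagram of Definition \ref{AJpdefl} (which is functoriality of the Leray spectral sequence with respect to $\pi_{k/\hat k_{\sigma_l}}: \Spec \hat k_{\sigma_l} \to \Spec k$) ensures that the etale localization sequence
\begin{equation*}
0 \to H^{2d-1-n}_{et}(X_{\mathbb C_l}, \mathbb Z_p) \xrightarrow{j^*} H^{2d-1}_{et}((X \times \square^n \setminus |Z|)_{\mathbb C_l}, \mathbb Z_p)^{[Z]} \xrightarrow{\partial} H^{2d}_{|Z|,et}((X \times \square^n)_{\mathbb C_l}, \mathbb Z_p)^{[Z]} \to 0
\end{equation*}
computes exactly $AJ_{et,p,\sigma_l}(X)(Z)$, the rightmost term being identified with $\mathbb C_l(n-d)$ via the purity/Thom isomorphism and the hypothesis that $[Z]$ generates the $[Z]$-subobject. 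This is identical to the proof of Theorem \ref{AJk}(i), just with $\bar k$ and $G$ replaced by $\mathbb C_l$ and $\hat G_{\sigma_l}$ throughout.

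For (ii), the standard criterion for an extension of $\hat G_{\sigma_l}$-modules to split applies: the class of the extension
\begin{equation*}
0 \to H^{2d-1-n}_{et}(X_{\mathbb C_l}, \mathbb Z_p) \to E^{[Z]} \xrightarrow{\partial} \mathbb C_l(n-d) \to 0
\end{equation*}
in $\Ext^1_{\hat G_{\sigma_l}}(\mathbb C_l, H^{2d-1-n}_{et}(X_{\mathbb C_l}, \mathbb Z_p))$ vanishes if and only if there exists a $\hat G_{\sigma_l}$-equivariant section $s: \mathbb C_l(n-d) \to E^{[Z]}$, equivalently an element $\alpha \in E^{[Z]} = H^{2d-1}_{et}((X \times \square^n \setminus |Z|)_{\mathbb C_l}, \mathbb Z_p)^{[Z]}$ such that $\alpha$ is $\hat G_{\sigma_l}$-invariant and $\partial \alpha$ is a generator of $\mathbb C_l(n-d)$, in particular non-zero. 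This gives the ``if and only if'' statement.

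The main (and essentially only non-trivial) obstacle is ensuring that the identification $H^{2d}_{|Z|,et}((X \times \square^n)_{\mathbb C_l}, \mathbb Z_p)^{[Z]} = \mathbb C_l(n-d)$ is compatible with the $\hat G_{\sigma_l}$-action, but this is immediate from the base change of the Thom/purity isomorphism along $\pi_{k/\hat k_{\sigma_l}}$, since the class of $Z$ on the $k$-level is Galois-invariant and the identification over $\bar k$ given in Theorem \ref{AJk} is $G$-equivariant. Hence both assertions reduce, via base change, to the already established Theorem \ref{AJk}.
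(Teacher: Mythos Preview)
Your proposal is correct and follows exactly the approach the paper takes: the paper's own proof is simply ``Similar to the proof of theorem \ref{AJk}'', and you have spelled out precisely how that similarity works (replace $G$ and $\bar k$ by $\hat G_{\sigma_l}$ and $\mathbb C_l$ in Jannsen's Leray spectral sequence argument for (i), then use the standard splitting criterion for (ii)). If anything, you have given more detail than the paper does.
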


\begin{proof}
Similar to the proof of theorem \ref{AJk}.
\end{proof}

\begin{defi}\label{AJpdefSl}
Let $f:X\to S$ a morphism with $S,X\in\SmVar(k)$. Let $j:S^o\hookrightarrow S$ an open subset such that 
for all $j\in\mathbb Z$, $j^*H^jRf_*\mathbb Z_{p,X_{\bar k}}$ is a local system. Let $d,n\in\mathbb N$. 
Let $\sigma_l:k\hookrightarrow\mathbb C_l$ an embedding and 
$k\subset\hat k_{\sigma_l}\subset\mathbb C_l$ the completion of $k$ with respect to $\sigma_l$.
Denoting $X^o:=X\times_SS^o$, we consider
\begin{eqnarray*}
AJ_{et,p,\sigma_l}(X^o/S^o):\mathcal Z^d(X,n)^{f,\partial=0}_{fhom}\to
\Gamma(S^o,\oplus_{s\in S_{(0)}^o}
i_{s*}\Ext^1(Gal(\mathbb C_l/\hat k_{\sigma_l}(s)),H_{et}^{2d-n-1}(X_{s,\bar k},\mathbb Z_p))), \\
Z\mapsto AJ_{et,p,\sigma_l}(X^o/S^o)(Z):=\nu^{et,p}_{Z,\sigma_l}:= \\
((s\in S_{(0)}^o)\mapsto (AJ_{et,p,\sigma_l}(X_s)(Z_s)
\in\Ext^1_{Gal(\mathbb C_l/\hat k_{\sigma_l}(s))}(\bar k,H_{et}^{2d-n-1}(X_{s,\bar k},\mathbb Z_p))))
\end{eqnarray*}
where $\mathcal Z^d(X,n)^{f,\partial=0}_{fhom}\subset\mathcal Z^d(X,n)^{f,\partial=0}$ denote the subabelian group
consisting of algebraic cycles $Z$ with $Z_s:=i_s^*Z\in\mathcal Z^d(X_s,n)_{hom}$.
Recall that $i_s:\left\{s\right\}\hookrightarrow S_{(0)}\subset S$ is a closed Zariski point of $S$.
\end{defi}

%%%%%%%%%%%%%%%%%%%%%%%%%%%%%%%%%%%%%%%%%%%%%%%%%%%%%%%%%%%%%%%%%%%%%%%%%%%%%%%%%%%%%%%%%%%%%%%%%%%%%%%%%%
\section{The vanishing of the etale Abel Jacobi map implies the vanishing of the complex Abel Jacobi map}
%%%%%%%%%%%%%%%%%%%%%%%%%%%%%%%%%%%%%%%%%%%%%%%%%%%%%%%%%%%%%%%%%%%%%%%%%%%%%%%%%%%%%%%%%%%%%%%%%%%%%%%%%%

The $p$ adic Hodge theory for open varieties implies the following main theorem : 

\begin{thm}\label{main}
Let $k$ a field of finite type over $\mathbb Q$. 
Denote $\bar k$ the algebraic closure of $k$ and $G=Gal(\bar k/k)$ its absolute Galois group.
Let $X\in\PSmVar(k)$. Consider an embedding $\sigma:k\hookrightarrow\mathbb C$.  
Let $p\in\mathbb N\backslash\delta(k,X)$ be a prime number (any by finitely many). 
Let $\sigma_p:k\hookrightarrow\mathbb C_p$ be an embedding.
Denote $\hat k_{\sigma_p}\subset\mathbb C_p$ the completion of $k$ with respect to $\sigma_p$.
Let $Z\in\mathcal Z^d(X,n)^{\partial=0}_{hom}$. Consider the exact sequences
\begin{itemize}
\item $0\to H^{2d-1-n}_{et}(X_{\bar k},\mathbb Z_p)\xrightarrow{j^*}  
H^{2d-1}_{et}(((X\times\square^n)\backslash|Z|)_{\bar k},\mathbb Z_p)\xrightarrow{\partial} 
H_{et,|Z|}^{2d}((X\times\square^n)_{\bar k},\mathbb Z_p)^0\to 0$ 
\item $0\to H^{2d-1-n}_{DR}(X)\xrightarrow{j^*}H^{2d-1}_{DR}((X\times\square^n)\backslash|Z|)
\xrightarrow{\partial}H_{DR,|Z|}^{2d}(X\times\square^n)^0\to 0$,
\end{itemize}
where $j:(X\times\square^n)\backslash|Z|\hookrightarrow X\times\square^n$ is the open embedding.
Consider the following assertions :
\begin{itemize}
\item[(i)] $AJ_{et,p}(X)(Z)=0\in\Ext_1^G(\bar k,H_{et}^{2d-1-n}(X_{\bar k},\mathbb Z_p)(d-n))$,
\item[(i)']there exist $\alpha\in H_{et}^{2d-1}(((X\times\square^n)\backslash|Z|)_{\bar k},\mathbb Z_p)(d)^{[Z]}$ 
such that $\alpha\in H_{et}^{2d-1}(((X\times\square^n)\backslash|Z|)_{\bar k},\mathbb Z_p)(d)^G$ 
and $\partial\alpha\neq 0$,
\item[(ii)] $AJ_{et,p,\sigma_p}(X)(Z)=0\in
\Ext_1^{\hat G_{\sigma_p}}(\mathbb C_p,H_{et}^{2d-1-n}(X_{\mathbb C_p},\mathbb Z_p)(d-n))$,
obviously (i) implies (ii),
\item[(ii)']there exist $\alpha\in H_{et}^{2d-1}(((X\times\square^n)\backslash|Z|)_{\mathbb C_p},\mathbb Z_p)(d)^{[Z]}$ 
such that $\alpha\in H_{et}^{2d-1}(((X\times\square^n)\backslash|Z|)_{\mathbb C_p},\mathbb Z_p)(d)^{\hat G_{\sigma_p}}$ 
and $\partial\alpha\neq 0$,
obviously (i)' implies (ii)',
\item[(iii)]there exist $w\in H^{2d-1}_{DR}(((X\times\square^n)\backslash|Z|)_{\hat k_{\sigma_p}})^{[Z]}$ such that 
$w\in F^dH^{2d-1}_{DR}(((X\times\square^n)\backslash|Z|)_{\hat k_{\sigma_p}})$, 
\begin{equation*}
w\in H^{2d-1}OL_{X\times\square^n}(\mathbb H_{pet}^{2d-1}(((X\times\square^n)\backslash|Z|)_{\mathbb C_p},
\Omega^{\bullet}_{(X\times\square^n)_{\mathbb C_p}^{pet},\log,\mathcal O}))
\end{equation*}
and $\partial w\neq 0$,
\item[(iv)] there exist $w\in H^{2d-1}_{DR}(((X\times\square^n)\backslash|Z|)_{\mathbb C})^{[Z]}$ 
such that $w\in F^dH^{2d-1}_{DR}(((X\times\square^n)\backslash|Z|)_{\mathbb C})$,  
\begin{equation*}
H^{2d-1}ev((X\times\square^n)\backslash|Z|)(w)\in 
H_{\sing}^{2d-1}(((X\times\square^n)\backslash|Z|)_{\mathbb C}^{an},2i\pi\mathbb Q),
\end{equation*}
and $\partial w\neq 0$,
\item[(iv)'] there exist an integer $m\in\mathbb N$ such that $m\cdot AJ_{\sigma}(X)(Z)=0\in J_{\sigma}^{2d-1-n,d}(X)$,
\end{itemize}
where the inclusion $OL_X:\Omega^{\bullet}_{X_{\mathbb C_p}^{pet},\log,\mathcal O}
\hookrightarrow\Omega^{\bullet}_{X_{\mathbb C_p}^{pet}}$ of $C(X_{\mathbb C_p}^{pet})$
is the subcomplex of logarithmic forms.
Then (i) is equivalent to (i)', (ii) is equivalent to (ii)', (ii)' implies (iii), (iii) implies (iv), 
(iv) is equivalent to (iv)'. Hence (i) implies (iv)'.
\end{thm}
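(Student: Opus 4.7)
The plan is to establish the chain $(i)\Leftrightarrow(i')\Rightarrow(ii)\Leftrightarrow(ii')\Rightarrow(iii)\Rightarrow(iv)\Leftrightarrow(iv')$, the crucial step being $(ii')\Rightarrow(iii)$, which is where $p$-adic Hodge theory for the open variety $U:=(X\times\square^n)\backslash|Z|$ enters. The equivalences $(i)\Leftrightarrow(i')$ and $(ii)\Leftrightarrow(ii')$ are immediate from Theorem \ref{AJk}(ii) and Theorem \ref{AJkl}(ii), both of which translate Abel--Jacobi vanishing into existence of a Galois-invariant lift with nonzero boundary in the localization sequence; and $(i)\Rightarrow(ii)$ is base change along $\pi_{\bar k/\mathbb C_p}$, which sends $G$-invariants into $\hat G_{\sigma_p}$-invariants, as already noted in the statement. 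The equivalence $(iv)\Leftrightarrow(iv')$ follows from Theorem \ref{AJC}(ii): a class $w\in F^d H^{2d-1}_{DR}(U_{\mathbb C})^{[Z]}$ with $2i\pi\mathbb Q$-rational Betti periods and nonzero boundary $\partial w=c[Z]$ for some $c\in\mathbb Q^*$ is exactly a rational splitting of the Carlson extension, so $AJ_\sigma(X)(Z)$ becomes trivial in $J_\sigma^{2d-1-n,d}(X)\otimes\mathbb Q$, equivalently annihilated by a suitable integer $m$.

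For the main step $(ii')\Rightarrow(iii)$, I apply Proposition \ref{keypropCp}(i) to $U\in\SmVar(k)$: given $\alpha\in H^{2d-1}_{et}(U_{\mathbb C_p},\mathbb Z_p)(d)^{\hat G_{\sigma_p}}$ with $\partial\alpha\neq0$, the associated de Rham class $w(\alpha):=H^{2d-1}R\alpha(U)(\alpha\otimes 1)$ lies in $F^d H^{2d-1}_{DR}(U_{\hat k_{\sigma_p}})$. Proposition \ref{LatticeLog}(i) then identifies this intersection with the image of the logarithmic complex, yielding $w(\alpha)\in H^{2d-1}OL_{X\times\square^n}(\mathbb H^{2d-1}_{pet}(U_{\mathbb C_p},\Omega^{\bullet\geq d}_{U_{\mathbb C_p},\log,\mathcal O}))$. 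Finally, $\partial w(\alpha)\neq 0$ follows from functoriality of $R\alpha$ applied to the open embedding $j:U\hookrightarrow X\times\square^n$, which gives a commutative square relating $\partial$ on the étale and de Rham sides; since the étale comparison is an isomorphism on the subspaces cut out by $[Z]$, the nonvanishing of $\partial\alpha$ transfers to $\partial w(\alpha)$.

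For $(iii)\Rightarrow(iv)$, the class $w\in H^{2d-1}_{DR}(U_{\hat k_{\sigma_p}})=H^{2d-1}_{DR}(U)\otimes_k\hat k_{\sigma_p}$ descends, by spreading out its finitely many coefficients in a $k$-basis, to some $w\in H^{2d-1}_{DR}(U_{k'})$ for a finitely generated extension $k\subset k'\subset\hat k_{\sigma_p}$. Choose an embedding $\sigma':k'\hookrightarrow\mathbb C$ extending $\sigma$. Applying Proposition \ref{keypropC}(ii) to $U/k'$, with the embedding $k'\hookrightarrow\hat k_{\sigma_p}\subset\mathbb C_p$ witnessing the logarithmic nature of $w$, produces $H^{2d-1}ev(U_{k'})(w)\in H^{2d-1}_{\sing}(U^{an}_{\mathbb C},2i\pi\mathbb Q)$. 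The condition $w\in F^d$ is preserved along $\sigma'$ by the filtered comparison $w(k'/\mathbb C)$ recalled in section 2, and $\partial w\neq 0$ is preserved under base change since the localization sequence is faithful with respect to flat extensions.

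The main obstacle is the tight Galois descent in step $(ii')\Rightarrow(iii)$: one needs both that the Fontaine comparison $R\alpha(U)$ be strictly filtered compatible with the Hodge filtration on the de Rham side (so that $\hat G_{\sigma_p}$-invariance lands $w(\alpha)$ in $F^d$ over $\hat k_{\sigma_p}$ rather than merely over $\mathbb C_p$), and that the resulting class be \emph{genuinely} logarithmic, not just a $\mathbb Z_p$-combination of logarithmic classes; this is the content of Proposition \ref{LatticeLog}(i), which ultimately reduces to surjectivity of $\phi-I$ on the non-logarithmic quotient of the de Rham complex (Proposition \ref{Ilprop}) via the semi-stable comparison theorem. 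A secondary but real delicacy is that the rationality in $(iv)$ is only over $\mathbb Q$, not $\mathbb Z$, reflecting that the $\lambda_i\in\mathbb Z_p$ appearing in the lattice decomposition of $w(\alpha)$ force us to clear a $p$-adic denominator; this is absorbed in the integer $m$ of $(iv')$, and is the reason the theorem concludes with torsion of the complex Abel--Jacobi image rather than outright vanishing.
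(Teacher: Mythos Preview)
Your proposal is correct and follows essentially the same route as the paper: the equivalences $(i)\Leftrightarrow(i')$, $(ii)\Leftrightarrow(ii')$, $(iv)\Leftrightarrow(iv')$ are handled identically via Theorems \ref{AJk}(ii), \ref{AJkl}(ii), \ref{AJC}(ii), and the key implication $(ii')\Rightarrow(iii)$ uses Proposition \ref{keypropCp}(i) followed by Proposition \ref{LatticeLog}, with $(iii)\Rightarrow(iv)$ via Proposition \ref{keypropC}. The only cosmetic difference is that the paper phrases $(ii')\Rightarrow(iii)$ through a basis extension argument (extending a basis of $H^{2d-1-n}_{et}(X_{\mathbb C_p},\mathbb Z_p)(d-n)^{\hat G_{\sigma_p}}$ to one of $H^{2d-1}_{et}(U_{\mathbb C_p},\mathbb Z_p)^{[Z]}(d)^{\hat G_{\sigma_p}}$ and taking $w=w(\alpha_{s+1})$), whereas you apply $w(-)$ directly to the given $\alpha$ and invoke functoriality of $R\alpha$ with respect to the localization triangle to transport $\partial\alpha\neq 0$ to $\partial w(\alpha)\neq 0$; both are valid and amount to the same thing. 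One small citation point: the identification you want in step $(ii')\Rightarrow(iii)$ is Proposition \ref{LatticeLog}(ii) (the algebraic logarithmic complex tensored with $\mathbb Z_p$) rather than (i) (the formal analytic version), though the two are linked by Proposition \ref{GAGAlog}.
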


\begin{proof}
\noindent (i) is equivalent to (i)': see theorem \ref{AJk}(ii),

\noindent (ii) is equivalent to (ii)': see theorem \ref{AJkl}(ii),

\noindent (ii)' implies (iii):By proposition \ref{LatticeLog} and proposition \ref{keypropCp}(i), we have :
\begin{eqnarray*}
H^{2d-1-n}(X_{\mathbb C_p},\mathbb Z_p)(d-n)^{\hat G_{\sigma_p}}
=F^dH^{2d-1-n}(X_{\hat k_{\sigma_p}})\cap H^{2d-1-n}_{et}(X_{\mathbb C_p},\mathbb Z_p) \\
=H^{2d-1-n}OL_X(\mathbb H_{pet}^{2d-1-n}(X_{\hat k_{\sigma_p}},
\Omega^{\bullet\geq d}_{X_{\hat k_{\sigma_p}}^{pet},\log,\mathcal O}\otimes\mathbb Z_p))
\end{eqnarray*}
and 
\begin{eqnarray*}
H^{2d-1}((X\times\square^n\backslash|Z|)_{\mathbb C_p},\mathbb Z_p)(d-n)^{\hat G_{\sigma_p}}
=F^dH^{2d-1}((X\times\square^n\backslash|Z|)_{\hat k_{\sigma_p}})\cap 
H^{2d-1}_{et}((X\times\square^n\backslash|Z|)_{\mathbb C_p},\mathbb Z_p) \\
=H^{2d-1}OL_X(\mathbb H_{pet}^{2d-1-n}((X\times\square^n\backslash|Z|)_{\hat k_{\sigma_p}},
\Omega^{\bullet\geq d}_{(X\times\square^n\backslash|Z|)_{\hat k_{\sigma_p}}^{pet},\log,\mathcal O}\otimes\mathbb Z_p)).
\end{eqnarray*}
We consider then a basis 
\begin{eqnarray*}
(\alpha_1,\cdots,\alpha_s)\in H^{2d-1-n}(X_{\mathbb C_p},\mathbb Z_p)(d-n)^{\hat G_{\sigma_p}}, \\ 
\mbox{with} \; w(\alpha_1),\cdots,w(\alpha_s)\in 
H^{2d-1-n}OL_X(\mathbb H_{pet}^{2d-1-n}(X_{\mathbb C_p},\Omega^{\bullet\geq d}_{X_{\mathbb C_p}^{pet},\log,\mathcal O})).
\end{eqnarray*}
By assumption on $\alpha$, it extend to a basis
\begin{eqnarray*}
(\alpha_1,\cdots,\alpha_s,\alpha_{s+1})\in 
H^{2d-1}((X\times\square^n\backslash|Z|)_{\mathbb C_p},\mathbb Z_p)^{[Z]}(d)^{\hat G_{\sigma_p}} \\
\mbox{with} \; w(\alpha_1),\cdots,w(\alpha_{s+1})\in 
H^{2d-1}OL_{X\times\square^n}(\mathbb H_{pet}^{2d-1}(((X\times\square^n)\backslash|Z|)_{\mathbb C_p},
\Omega^{\bullet\geq d}_{(X\times\square^n)_{\mathbb C_p}^{pet},\log,\mathcal O})).
\end{eqnarray*}
We then take $w=w(\alpha_{s+1})$.

\noindent (ii) implies (iii):follows from proposition \ref{keypropC} with 
$w:=\pi_{k/\mathbb C}((X\times\square^n)\backslash|Z|)^*w$,

\noindent (iii) is equivalent to (iii)' : see theorem \ref{AJC}(ii).
\end{proof}

It implies the following :

\begin{cor}\label{maincor} 
\begin{itemize}
\item[(i)]Let $k$ a field of finite type over $\mathbb Q$. Denote $\bar k$ the algebraic closure of $k$.
Let $f:X\to S$ a smooth projective morphism with $S,X\in\SmVar(k)$.
Consider an embedding $\sigma:k\hookrightarrow\mathbb C$. 
Then we have $\bar k\subset\mathbb C$ the canonical algebraic closure of $k$ inside $\mathbb C$.
Let $p\in\mathbb N$ a prime number. Then for $Z\in\mathcal Z^d(X,n)_{fhom}^{f,\partial=0}$, we have
\begin{equation*}
V_{tors}(\nu^{et,p}_Z)_{\mathbb C}\subset V_{tors}(\nu_Z)\subset S_{\mathbb C}
\end{equation*}
where
\begin{itemize}
\item $V(\nu_Z)\subset V_{tors}(\nu_Z)\subset S_{\mathbb C}$ is the zero locus, resp. torsion locus,
of the complex normal function 
\begin{equation*}
\nu_Z=:AJ_{\sigma}(X/S)(Z)\in\Gamma(S^{an}_{\mathbb C},J_{\sigma}^{2d-1-n,d}(X/S)) 
\end{equation*}
associated to $Z$ (see proposition-definition \ref{AJdefS}),
\item $V(\nu^{et,p}_Z)\subset V_{tors}(\nu^{et,p}_Z)\subset S$ is the zero locus, resp. torsion locus 
of the etale normal function 
\begin{equation*}
\nu^{et,p}_Z\in\Gamma(S,\oplus_{s\in S_{(0)}}
i_{s*}\Ext^1_{Gal(\bar k/k(s))}(\bar k,H_{et}^{2d-n-1}(X_{s,\bar k},\mathbb Z_p))(d-n)) 
\end{equation*}
associated to $Z$ (see definition \ref{AJpdefS}) and 
\begin{equation*}
V(\nu^{et,p}_Z)_{\mathbb C}:=\pi_{k/\mathbb C}(S)^{-1}(V(\nu^{et,p}_Z)), \;
V_{tors}(\nu^{et,p}_Z)_{\mathbb C}:=\pi_{k/\mathbb C}(S)^{-1}(V_{tors}(\nu^{et,p}_Z))
\end{equation*}
where we recall $\pi_{k/\mathbb C}(S):S_{\mathbb C}\to S$ is the projection.
\end{itemize}
\item[(ii)] Let $\sigma:k\hookrightarrow\mathbb C$ a subfield which is of finite type over $\mathbb Q$.
Denote $\bar k\subset\mathbb C$ the algebraic closure of $k$.
Let $f:X\to S$ a smooth projective morphism with $S,X\in\SmVar(k)$. 
Then for $Z\in\mathcal Z^d(X,n)_{fhom}^{f,\partial=0}$,
the zero locus $V(\nu_Z)\subset S_{\mathbb C}$ of the complex normal function 
\begin{equation*}
\nu_Z=:AJ_{\sigma}(X/S)(Z)\in\Gamma(S^{an}_{\mathbb C},J_{\sigma}^{2d-1-n,d}(X/S)) 
\end{equation*} 
associated to $Z$ is defined over $\bar k$ if $V(\nu^{et,p}_Z)\neq\emptyset$.
\end{itemize}
\end{cor}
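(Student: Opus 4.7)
For part (i), the plan is to verify the inclusion pointwise, applying Theorem \ref{main} fiberwise. Given $s \in V_{tors}(\nu^{et,p}_Z)_{\mathbb C}$, I would set $\bar s := \pi_{k/\mathbb C}(S)(s) \in S_{(0)}$; then $k(\bar s)/k$ is finite, hence $k(\bar s)$ is of finite type over $\mathbb Q$, and $s$ determines an embedding $\sigma': k(\bar s) \hookrightarrow \mathbb C$ extending $\sigma$. The hypothesis supplies $m \in \mathbb N$ with $AJ_{et,p}(X_{\bar s})(mZ_{\bar s}) = 0$, and Theorem \ref{main} applied to $(X_{\bar s}, mZ_{\bar s}, \sigma', p)$ would yield $m' \in \mathbb N$ with $mm'\cdot\nu_Z(s) = mm'\cdot AJ_{\sigma'}(X_{\bar s})(Z_{\bar s}) = 0$ (using Proposition-Definition \ref{AJdefS} to identify $\nu_Z(s) = AJ_{\sigma'}(X_{\bar s})(Z_{\bar s})$), so $s \in V_{tors}(\nu_Z)$.

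For part (ii), the first step would be analogous: picking $\bar s \in V(\nu^{et,p}_Z)$ and an embedding $\sigma': k(\bar s) \hookrightarrow \bar k \subset \mathbb C$ extending $\sigma$, Theorem \ref{main} applied to $(X_{\bar s}, Z_{\bar s}, \sigma', p)$ produces $m \in \mathbb N$ with $\nu_{mZ}(s) = m\nu_Z(s) = 0$ for the resulting $\bar k$-point $s \in S_{\mathbb C}(\bar k)$. Since $V(\nu_{mZ})(\bar k) \neq \emptyset$ and $mZ$ is defined over $k$, I would then invoke Charles' theorem (\cite{Charles2}) for the cycle $mZ$ to get that $V(\nu_{mZ}) \subset S_{\mathbb C}$ is defined over $k$, hence over $\bar k$.

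Next I would describe $V(\nu_Z)$ inside $V(\nu_{mZ})$ by components: on each irreducible component $W$ of $V(\nu_{mZ})$, the horizontal holomorphic section $\nu_Z|_W$ takes values in the étale $m$-torsion subgroup scheme $J^{2d-1-n,d}(X/S)[m]|_W$ of the relative intermediate Jacobian, hence is locally constant and therefore constant on $W$: $\nu_Z|_W \equiv c_W$. Thus $V(\nu_Z)$ is precisely the union of those irreducible components of $V(\nu_{mZ})$ on which $c_W = 0$.

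The hard part will be to show this union is stable under $\mathrm{Aut}(\mathbb C/\bar k)$, yielding the claimed $\bar k$-structure on $V(\nu_Z)$. My plan is to use the canonical identification of the torsion subgroup scheme $J^{2d-1-n,d}(X/S)[m]$ with the étale local system $R^{2d-n}f_*\mu_m^{\otimes d}$ (coming from the exponential sequence together with the étale–Betti comparison with $\mathbb Z/m$ coefficients), which turns each $c_W$ into a Galois-equivariant étale class compatible with the cycle class map that defines $\nu^{et,p}_Z$; the transformation law $c_{\sigma(W)} = \sigma(c_W)$ for $\sigma \in \mathrm{Aut}(\mathbb C/\bar k)$ then implies that the set $\{W : c_W = 0\}$ is $\mathrm{Aut}(\mathbb C/\bar k)$-invariant, so $V(\nu_Z)$ is defined over $\bar k$. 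The main technical obstacle is ensuring that this identification of the analytic Jacobian torsion with algebraic étale classes is set up in a way fully compatible with the construction of both $\nu_Z$ and $\nu^{et,p}_Z$, so that the Galois equivariance of the $c_W$ is unambiguous; this is where the integral $p$-adic comparison of Theorem \ref{CevGd} and Propositions \ref{keypropCp}, \ref{keypropC} can be pressed into service to match the structures Galois-equivariantly.
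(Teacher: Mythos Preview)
For part (i), your fiberwise application of Theorem \ref{main} is exactly the paper's argument: for a closed point $s\in S$, the residue field $k(s)$ is of finite type over $\mathbb Q$, a point $s'\in\pi_{k/\mathbb C}(S)^{-1}(s)$ determines an embedding $\sigma':k(s)\hookrightarrow\mathbb C$, and the two normal functions restrict by definition to the pointwise Abel--Jacobi images $AJ_{\sigma'}(X_s)(Z_s)$ and $AJ_{et,p}(X_s)(Z_s)$.

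For part (ii), your route diverges from the paper's. The paper's proof is two sentences: it asserts that by (i) the locus $V(\nu_Z)$ contains a $\bar k$-point and then cites Saito \cite{SaitoAJ} or Charles \cite{Charles2} directly. You have correctly observed that (i) only furnishes a $\bar k$-point of $V_{tors}(\nu_Z)$, not of $V(\nu_Z)$; the paper does not comment on this distinction. Your workaround --- pass to $mZ$, apply Charles to $V(\nu_{mZ})$, then identify $V(\nu_Z)$ as a union of components of $V(\nu_{mZ})$ on which the locally constant torsion section $\nu_Z|_{V(\nu_{mZ})}$ vanishes --- is a legitimate way to close this gap, and steps 1--3 of your outline are sound. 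The final step, showing the set of such components is $\mathrm{Aut}(\mathbb C/\bar k)$-stable via the identification of $J^{2d-1-n,d}(X/S)[m]$ with the \'etale sheaf $R^{2d-n}f_*\mu_m^{\otimes d}$, is the right idea (this is the standard comparison of the complex and \'etale Abel--Jacobi maps modulo $m$), and it does not actually require the full $p$-adic machinery of Theorem \ref{CevGd}: the Artin comparison with torsion coefficients and the compatibility of cycle classes already give that $\nu_Z|_{V(\nu_{mZ})}$, viewed as a section of this \'etale sheaf, is defined over $k$. So your sketch is more careful than the paper's proof at precisely the point where the paper is silent, though you should not over-engineer the last step.
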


\begin{proof}
\noindent(i):Follows immediately from theorem \ref{main} since 
for $s\in S_{(0)}$, $k(s)$ is of finite type over $\mathbb Q$ and for $s'\in\pi_{k/\mathbb C}(S)^{-1}(s)$
denoting $\sigma':k(s)\hookrightarrow\mathbb C$ the embedding given by $s'$, we have by definition
\begin{itemize}
\item $\nu_Z(s')=:AJ_{\sigma}(X/S)(Z)(s'):=AJ_{\sigma'}(X_s)(Z_s)\in J_{\sigma'}^{2d-1-n,d}(X_s)$.
\item $\nu^{et,p}_Z(s)=:AJ_{et,p}(X/S)(Z)(s):=AJ_{et,p}(X_s)(Z_s)
\in\Ext^1_{Gal(\bar k/k(s))}(\bar k,H_{et}^{2d-n-1}(X_{s,\bar k},\mathbb Z_p)(d-n))$.
\end{itemize}

\noindent(ii): Since $V(\nu^{et,p}_Z)\subset S$ contain a $\bar k$ point, 
$V(\nu_Z)\subset S_{\mathbb C}$ contain a $\bar k$ point by (i). Hence by 
the work of \cite{SaitoAJ} or \cite{Charles2}, $V(\nu_Z)\subset S_{\mathbb C}$ is defined over $\bar k$. 
\end{proof}

%%%%%%%%%%%%%%%%%%%%%%%%%%%%%%%%%%%%%%%%%%%%%%%%%%%%%%%%%%%%%%%%%%%%%%%%%%%%%%%%%%%%%%%%%%%%%%%%%%%%%%%
\section{Algebraicity of the zero locus of etale normal functions and the locus of Hodge Tate classes}
%%%%%%%%%%%%%%%%%%%%%%%%%%%%%%%%%%%%%%%%%%%%%%%%%%%%%%%%%%%%%%%%%%%%%%%%%%%%%%%%%%%%%%%%%%%%%%%%%%%%%%%

Let $k$ a field of finite type over $\mathbb Q$.
Let $f:X\to S$ a smooth proper morphism with $S,X\in\SmVar(k)$ connected. Let $p$ a prime number.
Let $Z\in\mathcal Z^d(X,n)^{\partial=0,f}_{hom}$.
By the definition, we have
\begin{equation*}
V(\nu_Z^{et,p})\subset\cap_{l\in\mathbb N, l \mbox{prime}}\cap_{\sigma_l:k\hookrightarrow\mathbb C_l}.
V(\nu_{Z,\sigma_l}^{et,p})\subset S_{(0)}
\end{equation*}
and
\begin{equation*}
V_{tors}(\nu_Z^{et,p})\subset\cap_{l\in\mathbb N, l \mbox{prime}}\cap_{\sigma_l:k\hookrightarrow\mathbb C_l}.
V_{tors}(\nu_{Z,\sigma_l}^{et,p})\subset S_{(0)}.
\end{equation*}
In this section, we investigate the algebraicity of $V_{tors}(\nu_Z^{et,p})\subset S$
and of $V_{tors}(\nu_{Z,\sigma_l}^{et,p})\subset S$, $\sigma_l:k\hookrightarrow\mathbb C_l$.
Since $Z$ is defined over $k$, we expect that the inclusion
$V_{tors}(\nu_Z^{et,p})\subset V(\nu_{Z,\sigma_l}^{et,p})$ is an equality for all primes $l\in\mathbb N$.

\begin{rem}
Let $k$ a field of finite type over $\mathbb Q$.
Let $f:X\to S$ a smooth proper morphism with $S,X\in\SmVar(k)$ connected. Let $p$ a prime number.
Let $Z\in\mathcal Z^d(X,n)^{\partial=0,f}_{hom}$. We can show, using \cite{JansenHasse}, that we have in fact
\begin{equation*}
V_{tors}(\nu_Z^{et,p})=\cap_{l\in\mathbb N, l \mbox{prime}}\cap_{\sigma_l:k\hookrightarrow\mathbb C_l}.
V_{tors}(\nu_{Z,\sigma_l}^{et,p})\subset S_{(0)}.
\end{equation*}
We don't need this result so we don't give the details.
\end{rem}

Let $k$ a field of finite type over $\mathbb Q$. 
Let $p$ be a prime number. Let $\sigma_p:k\hookrightarrow\mathbb C_p$ be an embedding.
We have then $\hat k_{\sigma_p}$ the completion of $k$ with respect to $\sigma_p$
and we denote $O_{\hat k_{\sigma_p}}\subset\hat k_{\sigma_p}$ its ring of integers.
We then consider the canonical functor of Huber (see section 2)
\begin{equation*}
\mathcal R:\Var(\hat k_{\sigma_p})\to\HubSp(\hat k_{\sigma_p},O_{\hat k_{\sigma_p}})\to\Sch/O_{\hat k_{\sigma_p}}, 
X\mapsto\mathcal R(X)=X^{\mathcal O}
\end{equation*}
which associated to a variety over a $p$ adic field its canonical integral model.
Let $f:X\to S$ a smooth projective morphism with $S,X\in\SmVar(k)$. 
Let $Z\subset X$ a closed subset and $j:U=X\backslash Z\hookrightarrow X$ the open complementary subset. 
We have then $f:=f_{\hat k_{\sigma_p}}:(X,Z)_{\hat k_{\sigma_p}}\to S_{\hat k_{\sigma_p}}$ 
the morphism in $\SmVar^2(\hat k_{\sigma_p})$ induced by the scalar extension functor and 
\begin{equation*}
f^{\mathcal O}:=\mathcal R(f_{\hat k_{\sigma_p}}):(X,Z)^{\mathcal O}_{\hat k_{\sigma_p}}
\to S^{\mathcal O}_{\hat k_{\sigma_p}}
\end{equation*}
its canonical integral model in $\Sch^2/O_{\hat k_{\sigma}}$ to which we denote 
\begin{equation*}
f:=f^{\mathcal O}:(X^{\mathcal O}_{\hat k_{\sigma_p}},N_{U,\mathcal O}):=
(X^{\mathcal O}_{\hat k_{\sigma_p}},(M_Z,N_{\mathcal O}))
\to (S^{\mathcal O}_{\hat k_{\sigma_p}},N_{\mathcal O})
\end{equation*}
the corresponding morphism in $\logSch$, 
where for $K$ a $p$ adic field and $Y\in\Sch/O_K$, $(Y,N_{\mathcal O}):=(Y,M_{Y_k})$ with $k=O_K/(\pi)$ the residual field.
We have then the morphisms of sites
\begin{equation*}
v_{X,N}:(X^{\mathcal O}_{\hat k_{\sigma_p}},N_{U,\mathcal O})^{Falt}\to
(X^{\mathcal O}_{\hat k_{\sigma_p}},N_{U,\mathcal O})^{ket}, \;
u_{X,N}:(X_{\mathbb C_p},M_{Z_{\mathbb C_p}})^{ket}\to
(X^{\mathcal O}_{\hat k_{\sigma_p}},N_{U,\mathcal O})^{Falt}
\end{equation*}
where $(X^{\mathcal O}_{\hat k_{\sigma_p}},N_{U,\mathcal O})^{Falt}$ denote the Falting site, 
and for $(Y,N)\in\logSch$, $(Y,N)^{ket}\subset\logSch/(Y,N)$ is the small Kummer etale site.
If $(X^{\mathcal O}_{\hat k_{\sigma_p}},N_{U,\mathcal O})$ is log smooth, we consider an hypercover
\begin{equation*}
a_{\bullet}:(X^{\mathcal O,\bullet}_{\hat k_{\sigma_p}},N_{U,\mathcal O})\to
(X^{\mathcal O}_{\hat k_{\sigma_p}},N_{U,\mathcal O})
\end{equation*}
in $\Fun(\Delta,\logSch)$ by small log schemes in sense of \cite{ChinoisCrys}.
The main result of \cite{ChinoisCrys} say that
\begin{itemize}
\item if $(X^{\mathcal O}_{\hat k_{\sigma_p}},N_{U,\mathcal O})$ is log smooth,
the embedding in $C((X^{\mathcal O}_{\hat k_{\sigma_p}})^{Falt})$
\begin{equation*}
\alpha(U):(\mathbb B_{st,X_{\hat k_{\sigma_p}}},N_{U,\mathcal O})\hookrightarrow 
a_{\bullet*}DR(X^{\mathcal O,\bullet}_{\hat k_{\sigma_p}}/O_{\hat k_{\sigma_p}})
(O\mathbb B_{st,X^{\bullet}_{\hat k_{\sigma_p}}},N_{U,\mathcal O})
\end{equation*}
is a filtered quasi-isomorphism compatible with the action of $Gal(\mathbb C_p,\hat k_{\sigma_p})$,
the Frobenius $\phi_p$ and the monodromy $N$,
note that we have a commutative diagram in $C_{fil}(X^{an,pet}_{\mathbb C_p})$
\begin{equation*}
\xymatrix{\mathbb B_{st,X_{\mathbb C_p},\log}\ar[rr]^{\alpha(U)}\ar[d]^{\subset} & \, &
O\mathbb B_{st,X_{\mathbb C_p},\log}\otimes_{O_X}\Omega_{X_{\mathbb C_p}}^{\bullet}(\log D_{\mathbb C_p})
\ar[d]^{\subset} \\
\mathbb B_{dr,X_{\mathbb C_p},\log}\ar[rr]^{\alpha(U)} & \, &
O\mathbb B_{dr,X_{\mathbb C_p},\log}\otimes_{O_X}\Omega_{X_{\mathbb C_p}}^{\bullet}(\log D_{\mathbb C_p})}
\end{equation*}
see section 3,
\item if $f:(X^{\mathcal O}_{\hat k_{\sigma_p}},N_{U,\mathcal O})\to (S^{\mathcal O}_{\hat k_{\sigma_p}},N_{\mathcal O})$ 
is log smooth, the morphism in $D_{fil}((S^{\mathcal O}_{\hat k_{\sigma_p}})^{Falt})$
\begin{eqnarray*}
T(f,\mathbb B_{st}):Rf_*(\mathbb B_{st,X_{\hat k_{\sigma_p}}},N_{U,\mathcal O})
\xrightarrow{T(f,f,\otimes)(-)\circ\ad(u_{X,N}^*,Ru_{X,N,*})(-)}
Rf_*\mathbb Z_{p,(X_{\mathbb C_p},M_{Z_{\mathbb C_p}})^{ket}}\otimes_{\mathbb Z_p}\mathbb B_{st,S_{\hat k_{\sigma_p}}} \\
\xrightarrow{\ad(j^*,Rj_*)(\mathbb Z_{p,(X_{\mathbb C_p},M_{Z_{\mathbb C_p}})^{et}})}
R(f\circ j)_*\mathbb Z_{p,U_{\mathbb C_p}^{et}}\otimes_{\mathbb Z_p}\mathbb B_{st,S_{\hat k_{\sigma_p}}} 
\end{eqnarray*}
is an isomorphism, where the last map is an isomorphim by \cite{Il} theorem 7.4.
\end{itemize}
This gives if $(X^{\mathcal O}_{\hat k_{\sigma_p}},N_{U,\mathcal O})$ is log smooth, 
for each $j\in\mathbb Z$, a filtered isomorphism of filtered abelian groups
\begin{eqnarray*}
H^jR\alpha(U):H_{et}^j(U_{\mathbb C_p},\mathbb Z_p)\otimes_{\mathbb Z_p}\mathbb B_{st,\hat k_{\sigma_p}}
\xrightarrow{H^jT(a_X,\mathbb B_{st})^{-1}}
H_{et}^j((X,N)^{Falt})(\mathbb B_{st,X_{\hat k_{\sigma_p}}},N_{U,\mathcal O}) \\
\xrightarrow{H^jR\Gamma((X_{\hat k_{\sigma_p}}^{\mathcal O},N_{U,\mathcal O}),\alpha(U))}
H^j_{DR}(U_{\hat k_{\sigma_p}})\otimes_{\hat k_{\sigma_p}}\mathbb B_{st,\hat k_{\sigma_p}}
\end{eqnarray*}
compatible with the action of $Gal(\mathbb C_p/\hat k_{\sigma_p})$, of the Frobenius $\phi_p$ and the monodromy $N$.
More generally, this gives if $\hat k_{\sigma_p}(s)$ is unramified for all $s\in S_{\hat k_{\sigma_p}}$ and if
$f:(X^{\mathcal O}_{\hat k_{\sigma_p}},N_{U,\mathcal O})\to (S^{\mathcal O}_{\hat k_{\sigma_p}},N_{\mathcal O})$ 
is log smooth, an isomorphism in $\Shv_{fil,G,\phi_p,N}(S_{\hat k_{\sigma_p}})$
\begin{eqnarray*}
H^jf_*\alpha(U):R^jf_*\mathbb Z_{p,U_{\mathbb C_p}^{et}}\otimes_{\mathbb Z_p}\mathbb B_{st,S_{\hat k_{\sigma_p}}}
\xrightarrow{H^jT(f,\mathbb B_{st})^{-1}}Rf_*(\mathbb B_{st,X_{\hat k_{\sigma_p}}},N_{U,\mathcal O}) \\
\xrightarrow{H^jRf_*\alpha(U)}
H^j\int_fj_{*Hdg}(O_{U_{\hat k_{\sigma_p}}},F_b)\otimes_{O_{S_{\hat k_{\sigma_p}}}}O\mathbb B_{st,S_{\hat k_{\sigma_p}}}
\end{eqnarray*}
that is a filtered isomorphism compatible with the action of $G=Gal(\mathbb C_p/\hat k_{\sigma_p})$, 
of the Frobenius $\phi_p$ and the monodromy $N$, writing for short again $f=f\circ j$.

\begin{defi}\label{nuZkalgdef}
Let $k$ a field of finite type over $\mathbb Q$. 
Let $f:X\to S$ a smooth proper morphism with $S,X\in\SmVar(k)$. Let $p$ a prime number.
Let $Z\in\mathcal Z(X,n)^{f,\partial=0}_{fhom}$. Denote $U:=(X\times\square^n)\backslash|Z|$.
We have then the following exact sequence in $DRM(S)$ (see \cite{B5} for the definition of De Rham modules)
\begin{eqnarray*}
0\to E^{2d-1-n}_{DR}(X/S):=H^{2d-1-n}\int_f(O_X,F_b)
\xrightarrow{j^*}E^{2d-1}_{DR}(U/S)^{[Z]}:=(H^{2d-1}\int_fj_{*Hdg}(O_U,F_b))^{[Z]} \\
\xrightarrow{\partial}E^{2d}_{DR,Z}(X/S)^{[Z]}:=(H^{2d}\int_f\Gamma^{Hdg}_{|Z|}(O_{X\times\square^n},F_b))^{[Z]}\to 0.
\end{eqnarray*}
Recall (see section 2) that $(X\times\square^n,M_{|Z|})\in\logVar(k)$ 
denote log structure associated to $(X\times\square^n,|Z|)\in\SmVar^2(k)$.
There exist a finite set of prime numbers $\delta(S)$ such that for all prime $p\in\mathbb N\backslash\delta(S)$,
all embedding $\sigma_p:k\hookrightarrow\mathbb C_p$ and all $s\in S_{(0)}$, 
$\hat k_{\sigma_p}(s)$ is unramified over $\mathbb Q_p$,
where $\hat k_{\sigma_p}\subset\mathbb C_p$ the $p$ adic completion of $k$ with respect to $\sigma_p$.
Let $p\in\mathbb N\backslash\delta(S)$ a prime number and $\sigma_p:k\hookrightarrow\mathbb C_p$ be an embedding
and consider $\hat k_{\sigma_p}\subset\mathbb C_p$ the $p$ adic completion of $k$ with respect to $\sigma_p$.
\begin{itemize}
\item Take, using \cite{DeYong} theorem 8.2 
(after considering an integral model $(X,Z)^{\mathcal R}\in\Sch^2/\mathcal R$ 
over $\mathcal R\subset k$ of finite type over $\mathbb Z$ with function field $k$), 
for $p\in\mathbb N\backslash\delta^0(X/S)$, where $\delta^0(X/S)$ is a finite set,
an alteration $\pi^0:(X\times\square^n)^0\to X\times\square^n$, that is a generically finite morphism 
such that $((X\times\square^n)^0,\pi^{0,-1}(|Z|))_{\hat k_{\sigma_p}}^{\mathcal O}$ is semi-stable pair,  
that is $\pi^{0,-1}(|Z|)\subset((X\times\square^n)^0$ is a normal crossing divisor and
$(((X\times\square^n)^0,\pi^{0,-1}(|Z|))_{\hat k_{\sigma_p}}^{\mathcal O})_t$ has semi-stable reduction 
where $t\in\Spec(O_{\hat k_{\sigma_p}})$ is the closed point.
Then there exists a closed subset $\Delta\subset S$ such that for all $s\in S^o:=S\backslash\Delta$,
$((X\times\square^n)^0,\pi^{0,-1}(|Z|))^{\mathcal O}_{\hat k_{\sigma_p},s}$ is a semi-stable pair.
\item Take using \cite{DeYong} theorem 8.2,
(after considering an integral model $(X_{\Delta},Z_{\Delta})^{\mathcal R}\in\Sch^2/\mathcal R$ 
over $\mathcal R\subset k$ of finite type over $\mathbb Z$ with function field $k$), 
for $p\in\mathbb N\backslash\delta^1(X/S)$, where $\delta^1(X/S)$ is a finite set, an alteration
$\pi^1:(X_{\Delta}\times\square^n)^1\to X_{\Delta}\times\square^n$ 
such that $((X_{\Delta}\times\square^n)^1,|Z_{\Delta}|)_{\hat k_{\sigma_p}}^{\mathcal O}$ is a semi-stable pair. 
Then there exists a closed subset $\Delta^2\subset\Delta$ such that for all $s\in S^1:=\Delta\backslash\Delta^2$,
$((X_{\Delta}\times\square^n)^1,|Z|_{\Delta})_{\hat k_{\sigma_p},s}^{\mathcal O}$ is a semi-stable pair.
\item Go on by induction.
\end{itemize}
We obtain by the above finite induction, for $p\in\mathbb N\backslash\delta(S,X/S)$,
with $\delta(S,X/S):=\delta(S)\cup(\cup_{\alpha\in\Lambda}\delta^{\alpha}(X/S))$,
a stratification $S=\sqcup_{\alpha\in\Lambda}S^{\alpha}$,
$\Lambda$ being a finite set, by locally closed subset $S^{\alpha}\subset S$, and alterations
(i.e. generically finite morphisms) 
$\pi^{\alpha}:(X_{S^{\alpha}}\times\square^n)^{\alpha}\to X_{S^{\alpha}}\times\square^n$ 
such that 
\begin{equation*}
f\circ\pi^{\alpha}:(X_{S^{\alpha}}\times\square^n)_{\hat k_{\sigma_p}}^{\alpha,\mathcal O},N_{U^{\alpha},\mathcal O})
\to (S_{\hat k_{\sigma_p}}^{\alpha},N_{\mathcal O})
\end{equation*}
is log smooth, that is for all $s\in S^{\alpha}$, 
$((X_{S^{\alpha}}\times\square^n)^{\alpha},\pi^{\alpha,-1}(|Z_{S^{\alpha}}|))^{\mathcal O}_{\hat k_{\sigma_p},s}$ 
is a semi-stable pair. 
We then set
\begin{eqnarray*}
T:=p_S(((\sqcup_{\alpha\in\Lambda}
(E^{2d-1}_{DR}(U^{\alpha}_{\hat k_{\sigma_p}}/S_{\hat k_{\sigma_p}}^{\alpha})
\otimes_{O_{S^{\alpha}_{\hat k_{\sigma_p}}}}O\mathbb B_{st,S^{\alpha}_{\hat k_{\sigma_p}}})^{\phi_p,N}) \\
\cap F^dE^{2d-1}_{DR}(U/S)\cap(E^{2d-1}_{DR}(U/S)^{[Z]}))\backslash E^{2d-1}_{DR}((X\times\square^n)/S))
\subset S
\end{eqnarray*}
and 
\begin{eqnarray*}
\hat T_{\sigma_p}:=
p_{S_{\hat k_{\sigma}}}(((\sqcup_{\alpha\in\Lambda}(E^{2d-1}_{DR}(U^{\alpha}_{\hat k_{\sigma_p}}/S_{\hat k_{\sigma_p}}^{\alpha})
\otimes_{O_{S_{\hat k_{\sigma_p}}}}O\mathbb B_{st,S_{\hat k_{\sigma_p}}})^{\phi_p,N})\cap \\ 
F^dE^{2d-1}_{DR}(U_{\hat k_{\sigma_p}}/S_{\hat k_{\sigma_p}})\cap E^{2d-1}_{DR}(U_{\hat k_{\sigma_p}}/S_{\hat k_{\sigma_p}})^{[Z]})
\backslash E_{DR}((X\times\square^n)_{\hat k_{\sigma_p}}/S_{\hat k_{\sigma_p}}))
\subset S_{\hat k_{\sigma_p}}
\end{eqnarray*}
where 
\begin{itemize}
\item $U^{\alpha}:=\pi^{\alpha,-1}(U_{S^{\alpha}})=
(X_{S^{\alpha}}\times\square^n)^{\alpha}\backslash\pi^{\alpha,-1}(|Z_{S^{\alpha}}|)$,
\item $E^{2d-1}_{DR}(U^{\alpha}/S^{\alpha}):=H^{2d-1}Rf_{*Hdg}(O_{U^{\alpha}},F_b)\in\Vect_{fil}(S^{\alpha})$,
\item $\phi_p$ is the Frobenius operator, $N$ is the monodromy operator,
\item $E^{2d-1}_{DR}(U_{S_{\alpha},\hat k_{\sigma_p}}/S^{\alpha}_{\hat k_{\sigma_p}})\subset 
E^{2d-1}_{DR}(U^{\alpha}_{\hat k_{\sigma_p}}/S^{\alpha}_{\hat k_{\sigma_p}})$
\item $\pi_{k/\hat k_{\sigma_p}}(S^{\alpha})^*E^{2d-1}_{DR}(U^{\alpha}/S^{\alpha})
\subset E^{2d-1}_{DR}(U^{\alpha}_{\hat k_{\sigma_p}}/S^{\alpha}_{\hat k_{\sigma_p}})$
is the canonical subset of closed points,
\item $p_S:E^{2d-1}_{DR}(U/S)\to S$ and
$p_{S_{\hat k_{\sigma}}}:E_{DR}(U_{\hat k_{\sigma_p}}/S_{\hat k_{\sigma_p}})\to S_{\hat k_{\sigma_p}}$ 
are the projections.
\end{itemize}
\end{defi}

\begin{lem}\label{nuZkalglem}
Let $G$ be a group. Consider a commutative diagram of $G$ modules
\begin{equation*}
\xymatrix{0\ar[r] & W\ar[r]\ar[d]^{\pi^*} & V\ar[r]^{\partial}\ar[d]^{\pi^*} & K\ar[r]\ar[d]^{\pi^*} & 0 \\
0\ar[r] & W'\ar[r] & V'\ar[r]^{\partial'} & K'\ar[r] & 0}
\end{equation*}
whose rows are exact sequence and $\pi^*:V\to V'$ is injective. 
Let $\alpha\in V$. 
Then $\alpha\in V^G$ and $\partial\alpha\neq 0$ if and only if $\pi^*\alpha\in V^{'G}$ and $\partial'\pi^*\alpha\neq 0$. 
\end{lem}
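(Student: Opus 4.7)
The plan is to prove both implications by a direct diagram chase, leaning only on three ingredients: the injectivity of the middle vertical $\pi^*\colon V\hookrightarrow V'$, the $G$-equivariance of all three vertical maps (which is built into the phrase ``diagram of $G$-modules''), and the commutativity of the two squares. No further structure on $W,V,K$ is needed beyond what is stated.

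For the backward implication, suppose $\pi^*\alpha\in (V')^G$ and $\partial'\pi^*\alpha\neq 0$. Given $g\in G$, the $G$-equivariance of $\pi^*$ gives $\pi^*(g\alpha-\alpha)=g\pi^*\alpha-\pi^*\alpha=0$, so the injectivity of $\pi^*\colon V\to V'$ forces $g\alpha=\alpha$; as $g$ was arbitrary, $\alpha\in V^G$. If we had $\partial\alpha=0$, then commutativity of the right-hand square would yield $\partial'\pi^*\alpha=\pi^*\partial\alpha=0$, contradicting the hypothesis; hence $\partial\alpha\neq 0$. This is the direction actually used in the localisation of $V_{tors}(\nu^{et,p}_Z)$ via alterations, and it requires no further input.

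For the forward implication, assume $\alpha\in V^G$ and $\partial\alpha\neq 0$. Then $\pi^*\alpha\in(V')^G$ is immediate from $G$-equivariance, and $\partial'\pi^*\alpha=\pi^*\partial\alpha$ by commutativity. The only subtle point is showing $\pi^*\partial\alpha\neq 0$, i.e.\ that $\pi^*\colon K\to K'$ does not kill $\partial\alpha$. This does not follow formally from the injectivity of $\pi^*\colon V\to V'$ (a snake-lemma calculation only gives an injection $\ker(K\to K')\hookrightarrow\mathrm{coker}(W\to W')$), so this is the step I expect to be the main obstacle. In the intended geometric application, however, $K$ and $K'$ are one-dimensional subobjects generated by the cycle classes $[Z]$ and $[\pi^{\alpha,-1}Z]$, and $\pi^{\alpha,*}$ acts on them as multiplication by a positive integer (essentially the degree of the alteration on the support of $Z$), hence is injective; this is what makes the equivalence hold in the situation the lemma is invoked.

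Concretely, I would write the proof as the two-line chase for $(\Leftarrow)$, and for $(\Rightarrow)$ record the equality $\partial'\pi^*\alpha=\pi^*\partial\alpha$ and note that the restriction of $\pi^*$ to the cyclic subgroup generated by $\partial\alpha$ is injective (either as a standing hypothesis coming from the diagram in the application, or because $\pi^*$ on cycle classes is multiplication by a nonzero integer).
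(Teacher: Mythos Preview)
Your backward implication is correct and complete; this is indeed the direction actually invoked in the proof of Theorem~\ref{nuZkalg}.

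You are also right that the forward implication does \emph{not} follow from the stated hypotheses alone: injectivity of $\pi^*\colon V\to V'$ does not force injectivity of $\pi^*\colon K\to K'$, and one can build trivial-$G$ counterexamples (e.g.\ $W=0$, $V=K=\mathbb Z$, $W'=V'=\mathbb Z$, $K'=0$, with $\pi^*\colon V\to V'$ the identity) where $\partial\alpha\neq 0$ but $\partial'\pi^*\alpha=0$. The paper's own proof is the single line ``Follows from the fact that $\langle\alpha\rangle$ define a splitting $W\oplus\langle\alpha\rangle\subset V$ of $G$ modules,'' which encodes the observation that the existence of such an $\alpha$ is equivalent to the top extension splitting as $G$-modules, but does not address why $\pi^*\partial\alpha\neq 0$ any more explicitly than you do. So the gap you flag is genuinely present in the lemma as written.

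Your diagnosis of what saves the situation in the application is exactly right: in Theorem~\ref{nuZkalg} the map $\pi^{\alpha*}$ on the one-dimensional quotient generated by the cycle class is multiplication by the degree of the alteration on the relevant component, hence injective, and this is what the paper is implicitly relying on. Your write-up is therefore more careful than the paper's; the cleanest repair is to add the hypothesis that $\pi^*\colon K\to K'$ is injective (or that $\pi^*\partial\alpha\neq 0$ whenever $\partial\alpha\neq 0$), which is harmless in every invocation.
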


\begin{proof}
Follows from the fact that $<\alpha>$ define a splitting $W\oplus<\alpha>\subset V$ of $G$ modules.
\end{proof}

\begin{thm}\label{nuZkalg}
Let $k$ a field of finite type over $\mathbb Q$. 
Let $f:X\to S$ a smooth proper morphism with $S,X\in\SmVar(k)$ connected. 
Let $p\in\mathbb N\backslash\delta(S,X/S)$ a be prime number,
where $\delta(S,X/S)$ is the finite set given in definition \ref{nuZkalgdef}.
Let $\sigma_p:k\hookrightarrow\mathbb C_p$ be an embedding.
Consider $\hat k_{\sigma_p}\subset\mathbb C_p$ the $p$ adic completion of $k$ with respect to $\sigma_p$.
Let $Z\in\mathcal Z^d(X,n)^{\partial=0,f}_{hom}$. 
\begin{itemize}
\item[(i)] We have
\begin{equation*}
V_{tors}(\nu_Z^{et,p})=T\cap S_{(0)}\subset S,
\end{equation*}
where $T\subset S$ is given in definition \ref{nuZkalgdef}.
\item[(ii)] For each embedding $\sigma_p:k\hookrightarrow\mathbb C_p$, we have  
\begin{equation*}
V_{tors}(\nu_{Z,\sigma_p}^{et,p})_{\hat k_{\sigma_p}}
=\hat T_{\sigma_p}\cap S_{(0),\hat k_{\sigma_p}}\subset S_{\hat k_{\sigma_p}},
\end{equation*}
where $\hat T_{\sigma_p}\subset S_{\hat k_{\sigma_p}}$ is given in definition \ref{nuZkalgdef},
and for $V\subset S$ a subset, $V_{\hat k_{\sigma_p}}:=\pi_{k/\hat k_{\sigma_p}}(S)^{-1}(V)$,
where $\pi_{k/\hat k_{\sigma_p}}(S):S_{\hat k_{\sigma_p}}\to S$ being the projection.
\end{itemize}
\end{thm}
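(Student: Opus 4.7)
The plan is to combine Theorem \ref{AJk}(ii) (resp. Theorem \ref{AJkl}(ii)) with the fiber-wise semi-stable $p$-adic comparison theorem of \cite{ChinoisCrys} applied to the stratification and alterations constructed in Definition \ref{nuZkalgdef}, in the spirit of the proof of Theorem \ref{main} but carried out in families. The upshot is to express the torsion locus as the set of closed points lying in the image, under $p_S$, of an explicitly constructible subvariety of the filtered de Rham bundle $E^{2d-1}_{DR}(U/S)$.

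First I would use Theorem \ref{AJk}(ii) to translate the condition that $s\in S_{(0)}$ lies in $V_{tors}(\nu_Z^{et,p})$ into the existence, after tensoring with $\mathbb Q_p$, of a class $\alpha_s\in H^{2d-1}_{et}(((X_s\times\square^n)\backslash|Z_s|)_{\bar k},\mathbb Q_p)(d)^{[Z_s]}$ which is $G_s:=Gal(\bar k/k(s))$-invariant with nonzero boundary $\partial\alpha_s\neq 0$. Next I would apply the semi-stable $p$-adic comparison theorem of \cite{ChinoisCrys} to the log smooth morphism $f\circ\pi^{\alpha}$ over $S^{\alpha}_{\hat k_{\sigma_p}}$, coupled with Proposition \ref{keypropCp}(ii) at each closed fiber. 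Since $\pi^{\alpha}$ is proper and generically finite of some degree $d_{\alpha}$, both $\pi^{\alpha,*}$ on étale $\mathbb Q_p$-cohomology and the de Rham pullback $\pi^{\alpha,*}:E^{2d-1}_{DR}(U_s/s)\hookrightarrow E^{2d-1}_{DR}(U^{\alpha}_s/s)$ are injective (as $\pi^{\alpha}_*\pi^{\alpha,*}=d_{\alpha}I$). Hence the Galois-invariant étale classes on $U_s$ correspond, via $H^{2d-1}f_*\alpha(U^{\alpha})$, to de Rham classes in $E^{2d-1}_{DR}(U_{s,\hat k_{\sigma_p}}/s)$ whose pullback to $U^{\alpha}_s$ is $\phi_p$-fixed and $N$-annihilated in
\begin{equation*}
E^{2d-1}_{DR}(U^{\alpha}_{\hat k_{\sigma_p}}/S^{\alpha}_{\hat k_{\sigma_p}})
\otimes_{O_{S^{\alpha}_{\hat k_{\sigma_p}}}}O\mathbb B_{st,S^{\alpha}_{\hat k_{\sigma_p}}},
\end{equation*}
and which lie in $F^d$ of the Hodge filtration. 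The nonvanishing $\partial\alpha_s\neq 0$ corresponds to the condition that the resulting de Rham class does not lift to $E^{2d-1}_{DR}((X\times\square^n)/S)$, which is precisely the exclusion in the definition of $T$. Taking the union over the strata $S^{\alpha}$ and projecting by $p_S$ yields exactly the description of $T$ in Definition \ref{nuZkalgdef}, proving (i); the local statement (ii) follows from the same argument using Theorem \ref{AJkl}(ii) and the local form of Proposition \ref{keypropCp} with $\hat G_{\sigma_p(s)}$ in place of $G_s$, combined with lemma \ref{nuZkalglem} to control the descent of the invariance condition through $\pi^{\alpha,*}$.

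The hard part will be verifying the compatibility of the comparison isomorphism $H^{2d-1}f_*\alpha(U^{\alpha})$ simultaneously with the boundary map $\partial:E^{2d-1}_{DR}(U/S)\to E^{2d}_{DR,Z}(X\times\square^n/S)$ and with the transfer through the alteration $\pi^{\alpha}$. Concretely, one must show that a $Gal(\mathbb C_p/\hat k_{\sigma_p}(s))$-invariant étale class on $U_s$ (not merely on $U^{\alpha}_s$) corresponds, under the comparison, to a de Rham class on $U_s$ whose pullback to $U^{\alpha}_s$ is $\phi_p,N$-invariant, and that the nonvanishing of the boundary is detected algebraically by the ${}^{[Z]}$-condition on the base. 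Once this compatibility is in place, the algebraicity of $T$ and $\hat T_{\sigma_p}$ is immediate: the intersection of $F^d$, of the ${}^{[Z]}$-subspace, and of the $\phi_p$-fixed and $N$-annihilated locus (all cut out by algebraic equations in the filtered holonomic $D$-module over the stratified base) defines a constructible subset whose image under $p_S$ is the asserted algebraic description of the torsion locus.
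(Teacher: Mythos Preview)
Your proposal is correct and follows essentially the same approach as the paper: the paper's proof likewise invokes the semi-stable comparison isomorphism of \cite{ChinoisCrys} for each $f^{\alpha}=f\circ\pi^{\alpha}$, then at a closed point $s\in S^{\alpha}_{(0)}$ combines Theorem \ref{AJk}(ii) (resp. Theorem \ref{AJkl}(ii)) with Lemma \ref{nuZkalglem} and Proposition \ref{keypropCp}(ii) to identify Galois-invariance of $\pi^{\alpha,*}\alpha$ with the conditions $w(\alpha')\in F^dH^{2d-1}_{DR}$ and $w(\alpha')\in(H^{2d-1}_{DR}\otimes\mathbb B_{st})^{\phi_p,N}$, noting the compatibility $w(\pi^{\alpha,*}\alpha)=\pi^{\alpha,*}w(\alpha)$. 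Your observation that $\pi^{\alpha}_*\pi^{\alpha,*}=d_{\alpha}I$ gives injectivity is exactly the content of Lemma \ref{nuZkalglem} as used in the paper, so the two arguments coincide.
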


\begin{proof}
Let $\sigma_p:k\hookrightarrow\mathbb C_p$ be an embedding.
For each $\alpha\in\Lambda$, by the semi-stable comparaison theorem for 
\begin{equation*}
f^{\alpha}:=f\circ\pi^{\alpha}:
((X_{S^{\alpha}}\times\square^n)_{\hat k_{\sigma_p}}^{\alpha,\mathcal O},N^{\mathcal O}_{U^{\alpha}})
\to (S^{\alpha,\mathcal O}_{\hat k_{\sigma_p}},N_{\mathcal O}) 
\end{equation*}
(\cite{ChinoisCrys}) which is log smooth, 
we have the isomorphism in $\Shv_{fil,G,\phi_p,N}(S^{\alpha}_{\hat k_{\sigma_p}})$
\begin{eqnarray}\label{ChinoisCryseq}
H^jf^{\alpha}_*\alpha(U^{\alpha}):R^jf^{\alpha}_*\mathbb Z_{p,U^{\alpha,et}_{\mathbb C_p}}\otimes_{\mathbb Z_p}
\mathbb B_{st,S^{\alpha}_{\hat k_{\sigma_p}}}
\xrightarrow{\sim}
H^j\int_{f^{\alpha}}j_{*Hdg}(O_{U^{\alpha}_{\hat k_{\sigma_p}}},F_b)\otimes_{O_{S_{\hat k_{\sigma_p}}}}
O\mathbb B_{st,S^{\alpha}_{\hat k_{\sigma_p}}},
\end{eqnarray}
recall that since $p\in\mathbb N\backslash\delta(S)$, 
$\hat k_{\sigma_p}(s)$ is unramified for all $s\in S_{\hat k_{\sigma_p}}$.

\noindent(i):
Let $s\in S^{\alpha}_{(0)}$. Denote $G:=Gal(\bar k/k(s))$.
\begin{itemize}
\item The map $\pi_s^{\alpha}:(X_s\times\square^n)^{\alpha}\to X_s\times\square^n$ is generically finite
since $\pi^{\alpha}:(X\times\square^n)^{\alpha}\to X\times\square^n$ is generically finite and $f$ is flat.
Thus by lemma \ref{nuZkalglem} and theorem \ref{AJk}, $\nu^{et,p}_Z(s):=AJ^{et,p}(X_s)(Z_s)=0$ 
if and only if there exists 
\begin{equation*}
\alpha\in H^{2d-1}_{et}(U_{S^{\alpha},s},\mathbb Z_p)^{[Z_s]}
\end{equation*}
such that $\pi^{\alpha,-1}(\alpha)\in H^{2d-1}_{et}(U^{\alpha}_s,\mathbb Z_p)(d)^G$ and 
$\partial\pi^{\alpha,-1}(\alpha)\neq 0$.
\item On the other hand by (\ref{ChinoisCryseq}) and proposition \ref{keypropCp}(ii),
$\alpha'\in H^{2d-1}_{et}(U^{\alpha}_s,\mathbb Z_p)(d)^G$ if and only if (see definition \ref{walpha})
$w(\alpha')_k\in F^dH_{DR}^{2d-1}(U^{\alpha}_s)$ and
\begin{equation*}
w(\alpha')\in (H_{DR}^{2d-1}(U^{\alpha}_{s,\hat k_{\sigma_p}})\otimes_{\hat k_{\sigma_p}(s)}\mathbb B_{st})^{\phi_p,N}.
\end{equation*}
Moreover $w(\pi^{\alpha,-1}(\alpha))=\pi^{\alpha,-1}(w(\alpha))$.
\end{itemize}

\noindent(ii): Let $s\in S_{(0)}^{\alpha}$ and $s'\in\pi_{k/\hat k_{\sigma_p}}(S)^{-1}(s)$. 
\begin{itemize}
\item Since $\pi_s^{\alpha}:(X_s\times\square^n)^{\alpha}\to X_s\times\square^n$ is generically finite (see the proof of (i)) 
we have by lemma \ref{nuZkalglem} and theorem \ref{AJkl}, 
$\nu^{et,p}_{Z,\sigma_p}(s):=AJ^{et,p}_{\sigma_p}(X_s)(Z_s)=0$ if and only if there exists 
\begin{equation*}
\alpha\in H^{2d-1}_{et}(U_{S^{\alpha},s,\hat k_{\sigma_p}},\mathbb Z_p)^{[Z_s]}
\end{equation*}
such that 
$\pi^{\alpha,-1}(\alpha)\in H^{2d-1}_{et}(U^{\alpha}_{s,\hat k_{\sigma_p}},\mathbb Z_p)(d)^{\hat G_{\sigma_p}}$ 
and $\partial\alpha\neq 0$.
\item On the other hand by (\ref{ChinoisCryseq}), 
$\alpha'\in H^{2d-1}_{et}(U^{\alpha}_{s,\hat k_{\sigma_p}},\mathbb Z_p)(d)^{\hat G_{\sigma_p}}$ if and only if 
(see definition \ref{walpha}) $w(\alpha')\in F^dH_{DR}^{2d-1}(U^{\alpha}_{\hat k_{\sigma_p},s})$ and 
\begin{equation*}
w(\alpha')\in (H_{DR}^{2d-1}(U^{\alpha}_{s,\hat k_{\sigma_p}})\otimes_{\hat k_{\sigma_p}(s)}\mathbb B_{st})^{\phi_p,N}
=(H_{DR}^{2d-1}(U^{\alpha}_{s',\hat k_{\sigma_p}})\otimes_{\hat k_{\sigma_p}(s)}\mathbb B_{st})^{\phi_p,N}.
\end{equation*}
Moreover $w(\pi^{\alpha,-1}(\alpha))=\pi^{\alpha,-1}(w(\alpha))$.
\end{itemize}
\end{proof}

Proposition \ref{keypropC} for (i), the main result of \cite{ChinoisCrys}
together with proposition \ref{keypropCp} for (ii), gives the following :

\begin{thm}\label{HLk}
Let $f:X\to S$ be a smooth morphism, with $S,X\in\SmVar(k)$ 
over a field $k\subset\mathbb C$ of finite type over $\mathbb Q$. 
Take a compactification $f:X\xrightarrow{j}\bar X\xrightarrow{\bar f}S$ of $f$ where $\bar X\in\SmVar(k)$,
$j$ is an open embedding and $\bar f$ is projective. Denote $Z:=\bar X\backslash X$. Consider
\begin{equation*}
E^j_{DR}(X/S):=H^j\int_{\bar f}j_{*Hdg}(O_X,F_b)\in DRM(S),
\end{equation*}
see \cite{B5} for the definition of the category of De Rham modules.
Let $p\in\mathbb N\backslash\delta(S,\bar X/S)$ a prime number and $\sigma_p:k\hookrightarrow\mathbb C_p$ an embedding.
Denote $\hat k_{\sigma_p}\subset\mathbb C_p$ the $p$-adic completion of $k$ with respect to $\sigma_p$.
Consider, using definition \ref{nuZkalgdef}, a stratification $S=\sqcup_{\alpha\in\Lambda}S^{\alpha}$,
$\Lambda$ being a finite set, by locally closed subset $S^{\alpha}\subset S$, and alterations
(i.e. generically finite morphisms) $\pi^{\alpha}:X^{\alpha}\to X_{S^{\alpha}}$ such that 
\begin{equation*}
f_{\alpha}:=f\circ\pi^{\alpha}:(\bar X_{\hat k_{\sigma_p}}^{\alpha,\mathcal O},N_{Z,\mathcal O})
\to (S_{\hat k_{\sigma_p}}^{\alpha},N_{\mathcal O})
\end{equation*}
is log smooth. Consider for $j,d\in\mathbb Z$,
\begin{itemize} 
\item the locus of Hodge Tate classes
\begin{equation*}
HT^p_{j,d}(X/S):=(\sqcup_{\alpha\in\Lambda}(E^j_{DR}(X_{\hat k_{\sigma_p}}^{\alpha}/S_{\hat k_{\sigma_p}}^{\alpha})
\otimes_{O_{X_{\hat k_{\sigma_p}}^{\alpha}}}O\mathbb B_{st,X})^{\phi_p,N})\cap
F^dE^j_{DR}(X_{\hat k_{\sigma_p}}/S_{\hat k_{\sigma_p}})\subset E^j_{DR}(X_{\hat k_{\sigma_p}}/S_{\hat k_{\sigma_p}})
\end{equation*}
where
\begin{equation*}
E^j_{DR}(X_{\hat k_{\sigma_p}}/S_{\hat k_{\sigma_p}}):=H^j\int_{\bar f}j_{*Hdg}(O_{X_{\hat k_{\sigma_p}}},F_b)
=E^j_{DR}(X/S)\otimes_k\hat k_{\sigma_p}\in DRM(S_{\hat k_{\sigma_p}}),
\end{equation*}
\item the locus of Hodge classes
\begin{equation*}
HL_{j,d}(X/S):=HL_{j,d}(X_{\mathbb C}/S_{\mathbb C}):=
F^dE^j_{DR}(X_{\mathbb C}/S_{\mathbb C})\cap R^jf_*\mathbb Q_{X_{\mathbb C}^{an}}
\subset E^j_{DR}(X_{\mathbb C}/S_{\mathbb C})
\end{equation*}
where
\begin{equation*}
E^j_{DR}(X_{\mathbb C}/S_{\mathbb C}):=H^j\int_f(O_{X_{\mathbb C}},F_b)
=E^j_{DR}(X/S)\otimes_k\mathbb C\in DRM(S_{\mathbb C}).
\end{equation*}
\end{itemize}
Then,
\begin{itemize}
\item[(i)] for each subfield $k'\subset\mathbb C$, we have a canonical embedding
\begin{eqnarray*}
ev(X/S):HT^p_{j,d}(X/S)(k')\hookrightarrow HL_{j,d}(X_{\mathbb C}/S_{\mathbb C})\otimes_{\mathbb Q}\mathbb Q_p, \\
w_s\mapsto (1/2i\pi)ev(X/S)(w_s):=ev(X_s)(w_s), \, s\in S_{k'}, 
\end{eqnarray*}
note that the image consists of logarithmic classes, hence
\begin{equation*}
p_{S_{\hat k_{\sigma_p}}}(HT^p_{j,d}(X/S))(k')\subset p_{S_{\mathbb C}}(HL_{j,d}(X_{\mathbb C}/S_{\mathbb C})),
\end{equation*}
where $p_{S_{\mathbb C}}:E^j_{DR}(X_{\mathbb C}/S_{\mathbb C})\to S_{\mathbb C}$ and
$p_{S_{\hat k_{\sigma_p}}}:E^j_{DR}(X_{\hat k_{\sigma_p}}/S_{\hat k_{\sigma_p}})\to S_{\hat k_{\sigma_p}}$ 
are the projections,
\item[(ii)] we have 
\begin{equation*}
(R^jf_*\mathbb Q_{p,X_{\bar k}^{et}}(d))^G=<HT^p_{j,d}(X/S)\cap F^dE^j_{DR}(X/S)>_{\mathbb Q_p} 
\subset E^j_{DR}(X_{\hat k_{\sigma_p}}/S_{\hat k_{\sigma_p}}),
\end{equation*}
where $<->_{\mathbb Q_p}$ denote the $\mathbb Q_p$ subvector bundle generated by $(-)$,
\item[(iii)] we have a canonical embedding in $\Shv(S_{\mathbb C})$
\begin{eqnarray*}
ev(X/S):\pi_{k/\mathbb C}(S)^*(R^jf_*\mathbb Q_{p,X_{\bar k}^{et}}(d))^G
\hookrightarrow HL_{j,d}(X_{\mathbb C}/S_{\mathbb C})\otimes_{\mathbb Q}\mathbb Q_p, \\
\alpha_s\mapsto (1/2i\pi)ev(X/S)(w(\alpha_s)):=ev(X_{k(s)})((1/2i\pi)w(\alpha_s)), \,
s'=\pi_{k/\mathbb C}^{-1}(s):k(s)\hookrightarrow\mathbb C, \, s\in S,
\end{eqnarray*}
where $\pi_{k/\mathbb C}(S):S_{\mathbb C}\to S$ is the projection.
\end{itemize}
\end{thm}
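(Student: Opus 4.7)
The plan is to prove parts (i), (ii), (iii) by reducing each to the absolute/fiberwise statements already established (propositions \ref{keypropC}, \ref{LatticeLog}, \ref{keypropCp}, theorem \ref{CevGd}), bridged by the semi-stable comparison theorem of Tsuji (the main result of \cite{ChinoisCrys}) applied stratum by stratum. First I would fix $p\in\mathbb N\backslash\delta(S,\bar X/S)$ and the stratification $S=\sqcup_{\alpha\in\Lambda}S^{\alpha}$ together with the alterations $\pi^{\alpha}:X^{\alpha}\to X_{S^{\alpha}}$ as in definition \ref{nuZkalgdef}, so that each $\bar f\circ\pi^{\alpha}:(\bar X^{\alpha,\mathcal O}_{\hat k_{\sigma_p}},N_{Z,\mathcal O})\to(S^{\alpha}_{\hat k_{\sigma_p}},N_{\mathcal O})$ is log smooth. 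The semi-stable comparison then provides, on each stratum, a filtered isomorphism in $\Shv_{fil,G,\phi_p,N}(S^{\alpha}_{\hat k_{\sigma_p}})$ between $R^jf^{\alpha}_*\mathbb Z_{p,U^{\alpha,et}_{\mathbb C_p}}\otimes\mathbb B_{st,S^{\alpha}_{\hat k_{\sigma_p}}}$ and $E^j_{DR}(X^{\alpha}_{\hat k_{\sigma_p}}/S^{\alpha}_{\hat k_{\sigma_p}})\otimes O\mathbb B_{st,S^{\alpha}_{\hat k_{\sigma_p}}}$, compatibly with the Galois, Frobenius and monodromy actions.

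For (i), given $w_s\in HT^p_{j,d}(X/S)(k')$ at $s\in S^{\alpha}(k')$, I would pull back by $\pi^{\alpha}$ and apply the fiberwise form of proposition \ref{LatticeLog}(ii)$'$ on $X^{\alpha}_s$: the defining condition $w_s\in F^d\cap (E^j_{DR}(X^{\alpha}_{\hat k_{\sigma_p}}/S^{\alpha}_{\hat k_{\sigma_p}})\otimes O\mathbb B_{st})^{\phi_p,N}$ restricted at $s$ exhibits $\pi^{\alpha,*}w_s=\sum_i\lambda_iw_{L,i}$ as a $\mathbb Z_p$-combination of logarithmic de Rham classes on $X^{\alpha}_s$. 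Proposition \ref{keypropC}(ii) then gives $ev(X^{\alpha}_s)(w_{L,i})\in H^j_{\sing}((X^{\alpha}_s)^{an}_{\mathbb C},2i\pi\mathbb Q)$, and the generic finiteness of $\pi^{\alpha}_s$ (via $\pi^{\alpha}_{s*}\pi^{\alpha,*}_s=\deg(\pi^{\alpha}_s)\cdot\mathrm{id}$, as in lemma \ref{keypropClem2}) descends the integrality to $ev(X_s)(w_s)\in F^dH^j(X^{an}_{s,\mathbb C},2i\pi\mathbb Q)\otimes\mathbb Q_p$, yielding the embedding into $HL_{j,d}(X_{\mathbb C}/S_{\mathbb C})\otimes\mathbb Q_p$. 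For (ii), I would apply the fiberwise version of proposition \ref{keypropCp}(ii): a class $\alpha\in(R^jf_*\mathbb Q_{p,X^{et}_{\bar k}}(d))^G_s$ is Galois invariant iff $w(\alpha)\in F^dE^j_{DR}(X/S)_s$, and the semi-stable comparison on the stratum through which $s$ passes identifies this de Rham image with the $(\phi_p,N)$-invariant part on the alteration, i.e. with $HT^p_{j,d}(X/S)_s$. Part (iii) then follows by combining (i) and (ii) and using the functoriality of $H^j\iota^{G,d}_{p,ev}$ from theorem \ref{CevGd} together with the compatibility of the semi-stable comparison with base change, which ensures that the pointwise map $\alpha_s\mapsto(1/2i\pi)ev(X_{k(s)})(w(\alpha_s))$ assembles into a morphism of sheaves on $S_{\mathbb C}$ via $\pi_{k/\mathbb C}(S)$.

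The main obstacle is that the alteration $\pi^{\alpha}$ is only generically finite, so logarithmic classes on $X^{\alpha}_s$ do not descend directly to logarithmic classes on $X_s$, and a priori one loses control of both the Hodge filtration and the $2i\pi\mathbb Q$-integrality when transferring back. This is circumvented using $\pi^{\alpha,*}$-injectivity on the relevant cohomology pieces and the trace/projection formula $\pi^{\alpha}_{s*}\pi^{\alpha,*}_s=\deg(\pi^{\alpha}_s)\cdot\mathrm{id}$ (applied with $\mathbb Q$-coefficients on the Betti side and after tensoring with $\mathbb Q_p$ on the étale side), exactly as in the absolute case treated by lemma \ref{keypropClem2} and the proof of theorem \ref{main}. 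The stratification $\Lambda$ is engineered precisely so that, on each piece, the full log-smooth $p$-adic Hodge machinery of \cite{ChinoisCrys} applies and delivers a $(\phi_p,N)$-module structure integrating cleanly with $F^{\bullet}$; once this is in place, the remaining verifications are formal applications of propositions \ref{keypropC}, \ref{LatticeLog}, \ref{keypropCp} fiber by fiber.
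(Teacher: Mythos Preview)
Your proposal is correct and follows essentially the same route as the paper: reduce fiberwise, invoke the semi-stable comparison of \cite{ChinoisCrys} on each stratum, then apply propositions \ref{LatticeLog}, \ref{keypropC} for (i), proposition \ref{keypropCp}(ii) for (ii), and combine for (iii). The only minor difference is that for (i) the paper passes to a desingularization $(\bar X'_s,D_s)\to(\bar X_s,Z_s)$ of the fiber to identify the $(\phi_p,N,\hat G)$-invariants directly with the logarithmic pro-\'etale cohomology via (the proof of) proposition \ref{LatticeLog}, whereas you work through the alteration $\pi^{\alpha}$ and descend using the trace formula as in lemma \ref{keypropClem2}; both accomplish the same reduction.
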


\begin{proof}
\noindent(i): Follows from proposition \ref{keypropC} and the equality, by (the proof of) proposition \ref{LatticeLog},
\begin{eqnarray*}
\mathbb H^j(X_{s,\mathbb C_p},
\Omega^{\bullet\geq d}_{\bar X^{\mathcal O}_{s,\hat k'_{\sigma_p}}}(\log D^{\mathcal O}_{s,\hat k'_{\sigma_p}})
\otimes_{O_{X_s^{\mathcal O}}}O\mathbb B_{st,X_{s,\hat k'_{\sigma_p}},\log D_{\hat k'_{\sigma_p}}})^{\phi_p,N,\hat G} 
=\mathbb H_{pet}^j(X_{s,\mathbb C_p},\Omega^{\bullet\geq l}_{X_{s,\mathbb C_p},\log,\mathcal O}\otimes\mathbb Z_p).
\end{eqnarray*}
where $(\bar X'_s,D_s)\to (\bar X_s,Z_s)$ is a desingularization, 
i.e. $\bar X'_s$ is smooth and $D_s\subset\bar X'_s$ is a normal crossing divisor,
after taking an embedding $\sigma'_p:k'\hookrightarrow\mathbb C_p$.

\noindent(ii):Follows from the isomorphism $(H^jf^{\alpha}_*\alpha(X^{\alpha}))$ (c.f. theorem \cite{ChinoisCrys}) 
and proposition \ref{keypropCp}(ii).

\noindent(iii):Follows from (i) and (ii).
\end{proof}

%%%%%%%%%%%%%%%%%%%%%%%%%%%%%%%%%%%%%%%%%%%%%%%%%%%%%%%%%%%%%%%%%%%%%%%%%%%
%%%%%%%%%%%%%%%%%%%%%%%%%%%%%%%%%%%%%%%%%%%%%%%%%%%%%%%%%%%%%%%%%%%%%%%%%%%

LAGA UMR CNRS 7539 \\
Universit\'e Paris 13, Sorbonne Paris Cit\'e, 99 av Jean-Baptiste Clement, \\
93430 Villetaneuse, France, \\
bouali@math.univ-paris13.fr

\end{document}